\documentclass[reqno, 11pt, letterpaper]{amsart}

\oddsidemargin0.25in
\evensidemargin0.25in
\textwidth6.00in
\topmargin0.00in
\textheight8.50in

\usepackage{amsmath}
\usepackage{amssymb}
\usepackage{graphicx}
 \usepackage{subfig}
\usepackage{amsthm,graphicx,color,yfonts}
\usepackage{hyperref}

\newtheorem{theorem}{Theorem}

\newtheorem{proposition}[theorem]{Proposition}
\newtheorem{lemma}[theorem]{Lemma}
\newtheorem{corollary}[theorem]{Corollary}

\theoremstyle{remark}
\newtheorem{remark}[theorem]{Remark}

\newcommand{\R}{\mathbb{R}}

\newcommand{\Z}{\mathbb{Z}}

\usepackage{wrapfig}
\usepackage{tikz}
\usetikzlibrary{arrows,calc,decorations.pathreplacing}
\definecolor{light-gray1}{gray}{0.90}
\definecolor{light-gray2}{gray}{0.80}
\definecolor{light-gray3}{gray}{0.60}

\numberwithin{equation}{section}

\numberwithin{theorem}{section}

\numberwithin{table}{section}

\numberwithin{figure}{section}

\ifx\pdfoutput\undefined
  \DeclareGraphicsExtensions{.pstex, .eps}
\else
  \ifx\pdfoutput\relax
    \DeclareGraphicsExtensions{.pstex, .eps}
  \else
    \ifnum\pdfoutput>0
      \DeclareGraphicsExtensions{.pdf}
    \else
      \DeclareGraphicsExtensions{.pstex, .eps}
    \fi
  \fi
\fi

\title[Standing waves for NLS with a partial confinement]{Uniqueness and orbital stability of standing waves for the nonlinear Schr\"odinger equation with a partial confinement}

\date{\today}
\linespread{1.2}

\author[Y. Hong]{Younghun Hong}
\address{Department of mathematics, Chung-Ang University, Seoul 06974, Korea}
\email{yhhong@cau.ac.kr}

\author[S. Jin]{Sangdon Jin}
\address{Department of mathematics, Chung-Ang University, Seoul 06974, Korea}
\email{sdjin@cau.ac.kr}

\begin{document}

\maketitle

\begin{abstract}
We consider the 3d cubic nonlinear Schr\"odinger equation (NLS) with a strong 2d harmonic potential. The model is physically relevant to observe the lower-dimensional dynamics of the Bose-Einstein condensate, but its ground state cannot be constructed by the standard method due to its supercritical nature. In Bellazzini-Boussa\"id-Jeanjean-Visciglia \cite{BBJV}, a proper ground state is constructed introducing a constrained energy minimization problem. In this paper, we further investigate the properties of the ground state. First, we show that as the partial confinement is increased, the 1d ground state is derived from the 3d energy minimizer with a precise rate of convergence. Then, by employing this dimension reduction limit, we prove the uniqueness of the 3d minimizer provided that the confinement is sufficiently strong. Consequently, we obtain the orbital stability of the minimizer, which improves that of the set of minimizers in the previous work \cite{BBJV}.
\end{abstract}


\section{Introduction}
\subsection{Background}
Consider the 3d cubic nonlinear Schr\"odinger equation (NLS) with a 2d partial confinement 
\begin{equation}\label{NLS0}
i\partial_tu=(-\Delta_x+\omega^2|y|^2)u-\frac{1}{\omega}|u|^2u,
\end{equation}
where $u=u(t,x): I(\subset\mathbb{R})\times\mathbb{R}^3\to\mathbb{C}$ and
$$x=(y,z)\in\mathbb{R}_x^3=\mathbb{R}_y^2\times\mathbb{R}_z.$$
The parameter $\omega>0$ represents the strength of the 2d quadratic potential and the weakness of the nonlinearity simultaneously. The NLS is a canonical equation for wave propagation that arises in various fields of physics \cite{SulemSulem}. This particular model \eqref{NLS0} describes the mean-field dynamics of an extremely cooled boson gas, namely a Bose--Einstein condensate confined in an anisotropic trap. We refer to Chen \cite{XChen} for a rigorous derivation of the model \eqref{NLS0} from the many-body bosonic system.

In physical experiments, the 2d partial confinement $\omega^2|y|^2$ is used to simulate lower-dimensional cigar-shaped condensates \cite{KSFBCCCS}. By increasing the strength of the trap $\omega\to\infty$, a low energy state $u(t,x)$ to the 3d NLS \eqref{NLS0} can be asymptotically described by a factorized state 
\begin{equation}\label{3d solution approximation}
v(t,z)e^{-2it\omega}\sqrt{\omega}\Phi_0(\sqrt{\omega}y),
\end{equation}
where $v=v(t,z):I(\subset\mathbb{R})\times\mathbb{R}\to\mathbb{C}$ is a solution to the 1d cubic NLS
\begin{equation}\label{1dNLS}
i\partial_t v=-\partial_z^2 v-\frac{1}{2\pi}|v|^2v
\end{equation}
and $\Phi_0(y)=\frac{1}{\sqrt{\pi}}e^{-\frac{|y|^2}{2}}$ is an $L_y^2(\mathbb{R}^2)$-normalized eigenfunction of the 2d Hermite operator
$$H_y=-\Delta_y+|y|^2$$
corresponding to the lowest eigenvalue $2$ (refer to Appendix A for the proof).
\begin{figure}[h]
\subfloat{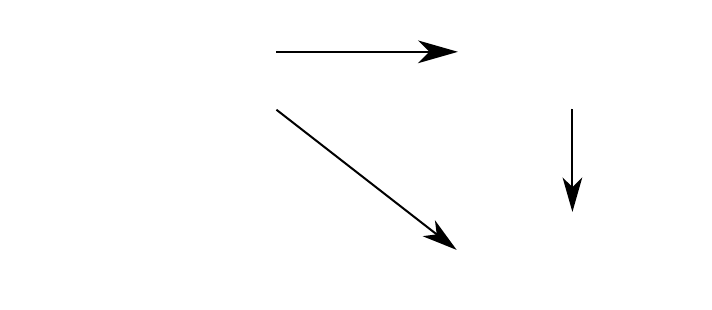}
\caption*{Figure 1.}
\label{Diagram}
\end{figure}

The dimension reduction of Bose--Einstein condensates is important both theoretically and experimentally, and it has been studied in various settings. In \cite{LSY, SY}, it was shown that cigar-shaped and disk-shaped condensates were obtained from the ground state of the 3d many-body bosonic Schr\"odinger operator. The lower-dimensional time-dependent NLSs are derived from the 3d linear Schr\"odinger equation (LS) including the attractive interaction case \cite{ Bossmann 1d, Bossmann 2d, Bossmann-Teufel, CH, CH 2d}. Moreover, the convergence from a high- to a low-dimensional NLS is established in various physical contexts \cite{AC,BM, BM1,  HT, MS}. We also note that a similar dimension reduction problem can be formulated on the product space $\R^k\times M$ as a compact manifold $M$ shrinks \cite{HP, HPTV, TTV}. For further references and related results, we refer to the survey article by Bao and Cai \cite{BaoCai}.

In analysis perspective, however, because the 3d cubic NLS \eqref{NLS} is mass-supercritical, one may encounter several technical challenges. It is not globally well-posed, and a finite time blow-up may occur (see Lemma \ref{blow up lemma}). Furthermore, an orbitally stable state cannot be constructed as a typical notion of a ground state,  because the energy functional is not bounded below under a mass constraint. Nevertheless, it is shown in the important work of Bellazzini, Boussa\"id, Jeanjean and Visciglia \cite{BBJV} that a certain energy minimizer can still be constructed by imposing an additional constraint. In addition, the authors established the orbital stability of a set of energy minimizers \cite[Theorem 1]{BBJV}.

The purpose of this study is twofold. First, we rigorously derive the 1d ground state from the dimension reduction limit ($\omega\to\infty$) of the 3d energy minimizer obtained in \cite{BBJV}. This corresponds to the downward arrow on the right-hand side of Figure 1 along the minimum energy states. Second, by employing convergence, we establish the local uniqueness of the 3d energy minimizer. Consequently, we upgrade the orbital stability of the set of energy minimizers to that of the minimizer.

\subsection{Setup and the statement of the main result}

To clarify the connection to the 1d model, we reformulate the setup of the problem as follows. Motivated by the ansatz \eqref{3d solution approximation}, replacing $e^{2it\omega}\frac{1}{\sqrt{\omega}}u(t,\frac{y}{\sqrt{\omega}},z)$ with $u(t,x)$, we rewrite \eqref{NLS0} as 
\begin{equation}\label{NLS}
i\partial_tu=\big(\omega (H_y-2)-\partial_z^2\big)u-|u|^2u.
\end{equation}
Then, it can be shown that the NLS \eqref{NLS} is locally well-posed in the weighted energy space
\begin{equation}\label{energy space}
\Sigma=\Big\{u\in H_x^1(\mathbb{R}^3) : yu\in L_x^2(\mathbb{R}^3)\Big\}
\end{equation}
equipped with the norm
$$\|u\|_{ {\Sigma} }: =\left\{\int_{\mathbb{R}^3}|u(x)|^2+|\nabla_x u(x)|^2+|y|^2|u(x)|^2 dx\right\}^{1/2}$$
(see \cite{Carles, Caz}), and its solutions preserve the mass
$$M(u)=\|u\|_{L_x^2(\mathbb{R}^3)}^2$$
and energy
$$E_\omega(u)=\frac{\omega}{2}\|u\|_{\dot{\Sigma}_y}^2+\frac{1}{2}\|\partial_z u\|_{L_x^2(\mathbb{R}^3)}^2-\frac{1}{4}\|u\|_{L_x^4(\mathbb{R}^3)}^4,$$
where
\begin{equation}\label{energy norm}
\|u\|_{\dot{\Sigma}_y}:=\|\sqrt{H_y-2}u\|_{L_x^2(\mathbb{R}^3)}=\left\{\int_{\mathbb{R}^3}|\nabla_y u(x)|^2+|y|^2|u(x)|^2-2|u(x)|^2dx\right\}^{1/2}.
\end{equation}

\begin{remark}\label{rmk: modified energy}
To be precise, the energy $E_\omega(u)$ is ``adjusted" in that a mass term $\omega M(u)$ is subtracted from the more ``natural" energy functional
$$\tilde{E}_\omega(u)=\frac{\omega}{2}\left\{\|\nabla_y u\|_{L_x^2(\mathbb{R}^3)}^2+\|yu\|_{L_x^2(\mathbb{R}^3)}^2\right\}+\frac{1}{2}\|\partial_z u\|_{L_x^2(\mathbb{R}^3)}^2-\frac{1}{4}\|u\|_{L_x^4(\mathbb{R}^3)}^4.$$ 
Note that this modification does not make any essential difference in the energy minimization problem because the mass is fixed (see \eqref{minimization} below). The role of $-\omega M(u)$ is to keep the ground state energy finite in the limit $\omega\to\infty$. Indeed, it is expected that the lowest energy state would be of the form $u(t,x)=v(t,z)\Phi_0(y)$. Then, the energy $\tilde{E}(u)$ grows as $\frac{\omega}{2}\|\sqrt{H_y}u\|_{L_x^2(\mathbb{R}^3)}^2=\omega M(u)$. Thus, $\omega M(u)$ is removed from the energy.
\end{remark}

For large $\omega\geq 1,$ which will be specified later, we consider the energy minimization problem with an additional constraint, 
\begin{equation}\label{minimization}
\mathcal{J}_\omega(m)=\inf\left\{E_\omega(u): u\in \Sigma,\   M(u)=m\textup{ and }\|u\|_{\dot{\Sigma}_y}^2\leq  \sqrt{\omega}\right\}.
\end{equation}
We now state the existence of a minimizer for the problem $\mathcal{J}_\omega(m)$.
 
\begin{theorem}[Existence of an energy minimizer, {\cite[Theorem 1]{BBJV}}]\label{th1} If $\omega \ge (C_{GN})^4m^2$, where $C_{GN}>0$ is the constant given in Lemma \ref{GN type inequality}, then the following hold:
\begin{enumerate}
\item For any minimizing sequence $\{u_n\}_{n=1}^\infty$ of the variational problem $\mathcal{J}_\omega(m)$, there exists a subsequence of $\{u_n\}_{n=1}^\infty$ (but still denoted by $\{u_n\}_{n=1}^\infty$), $\{\theta_n\}_{n=1}^\infty\subset\mathbb{R}$ and $\{z_n\}_{n=1}^\infty\subset\mathbb{R}$ such that $e^{i\theta_n}u_n(y,z-z_n)\to Q_\omega$ in $\Sigma$.
\item The limit $Q_\omega$ is a minimizer for the problem $\mathcal{J}_\omega(m)$.
\item A minimizer must be of the form $e^{i\theta}Q_\omega(y,z-z_0)$, with $\theta, z_0\in \R$, where $Q_\omega(x)=Q_\omega(|y|,|z|)$ and it is non-negative and decreasing as $|y|,|z|\to\infty$.
\item The minimizer $Q_\omega$ solves the elliptic equation
\begin{equation}\label{3D EL equation}
\omega(H_y  -2  )Q_\omega-\partial_z^2Q_\omega- Q_\omega^3=-\mu_\omega Q_\omega,
\end{equation} 
where $\mu_\omega\in \R$ is a Lagrange multiplier.
\end{enumerate}
\end{theorem}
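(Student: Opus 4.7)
The plan is to follow the classical Lions concentration-compactness strategy, adapted to the asymmetric geometry in which the $y$-direction is confined by the harmonic trap (hence compact in the appropriate sense) while the $z$-direction remains translation invariant. First, I would combine the Gagliardo--Nirenberg inequality of Lemma \ref{GN type inequality} with the two constraints $M(u)=m$ and $\|u\|_{\dot{\Sigma}_y}^2\le\sqrt{\omega}$ to absorb the term $-\frac{1}{4}\|u\|_{L^4}^4$ into the positive kinetic part $\frac{\omega}{2}\|u\|_{\dot{\Sigma}_y}^2+\frac{1}{2}\|\partial_z u\|_{L^2}^2$ of $E_\omega$. The hypothesis $\omega\ge(C_{GN})^4 m^2$ is precisely the threshold that makes this absorption quantitative, yielding $\mathcal{J}_\omega(m)>-\infty$ and, for any minimizing sequence $\{u_n\}$, uniform bounds on $\|\partial_z u_n\|_{L^2}$ and $\|u_n\|_{\dot{\Sigma}_y}$; hence $\{u_n\}$ is bounded in $\Sigma$.

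Next I would rule out the two bad alternatives in Lions' principle. Testing against the factorized profile $u(x)=\sqrt{\lambda}\,v(\lambda z)\,\Phi_0(y)$, where $v\in H^1(\mathbb{R})$ satisfies $\|v\|_{L^2}^2=m$, trivializes the $\dot{\Sigma}_y$-constraint (since $\|u\|_{\dot{\Sigma}_y}=0$), and a subcritical scaling in $\lambda$ yields $E_\omega(u)<0$; thus $\mathcal{J}_\omega(m)<0$, which forbids vanishing. A standard truncation-and-rescaling argument in the $z$-variable gives the strict subadditivity $\mathcal{J}_\omega(m_1+m_2)<\mathcal{J}_\omega(m_1)+\mathcal{J}_\omega(m_2)$ for all $m_1,m_2>0$, which forbids dichotomy. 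Since the 2d trap forces compact embedding of $\Sigma$-bounded sets into $L^q(\mathbb{R}^2_y\times K)$ on any $z$-compact $K$ (for $2<q<6$), a $z$-tight sequence is automatically strong in $L^4$. After extracting a subsequence and choosing $z_n,\theta_n\in\mathbb{R}$, the translated and modulated sequence $e^{i\theta_n}u_n(y,z-z_n)$ converges weakly in $\Sigma$ and strongly in $L^4(\mathbb{R}^3)$ to some $Q_\omega$. Weak lower semicontinuity of each quadratic term together with strong $L^4$-convergence forces $E_\omega(Q_\omega)\le\mathcal{J}_\omega(m)$, and hence equality by admissibility; each individual kinetic norm must then converge, upgrading weak convergence to strong convergence in $\Sigma$. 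This establishes (1) and (2).

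For part (3), Schwarz symmetrization in $y$ and Steiner symmetrization in $z$ preserve $M$ and $\|\cdot\|_{L^4}^4$ and do not increase $\|\nabla_y u\|_{L^2}^2$, $\|\partial_z u\|_{L^2}^2$, or $\||y|u\|_{L^2}^2$; replacement by $|u|$ does not raise $E_\omega$ either. Hence $Q_\omega$ may be chosen nonnegative, radial in $|y|$, and symmetric decreasing in $z$, as claimed. For part (4), the crucial step is to show $\|Q_\omega\|_{\dot{\Sigma}_y}^2<\sqrt{\omega}$ \emph{strictly}, so that the inequality constraint is inactive and contributes no multiplier. Combining $E_\omega(Q_\omega)=\mathcal{J}_\omega(m)<0$ with Lemma \ref{GN type inequality} and the first-step bound on $\|\partial_z Q_\omega\|_{L^2}$ gives an a priori estimate of the form $\|Q_\omega\|_{\dot{\Sigma}_y}^2=O(\omega^{-1})$, which is strictly less than $\sqrt{\omega}$ whenever $\omega\ge(C_{GN})^4 m^2$. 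Then only the mass constraint produces a Lagrange multiplier $\mu_\omega$, and \eqref{3D EL equation} follows. The principal obstacle, as in the original work \cite{BBJV}, is precisely this strict-inactivity argument together with the strict subadditivity: both rely essentially on the partial-confinement structure and do not follow from classical scaling considerations.
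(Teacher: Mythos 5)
Your overall architecture (Gagliardo--Nirenberg coercivity from Lemma \ref{GN type inequality}, negativity of $\mathcal{J}_\omega(m)$ via factorized 1d-type trial functions, concentration--compactness with translations in $z$ only, and strict inactivity of the constraint to derive \eqref{3D EL equation}) is the same as the paper's. The genuine gap is in the dichotomy step. You assert that ``a standard truncation-and-rescaling argument in the $z$-variable gives the strict subadditivity,'' but no such standard argument is available here: any rescaling $u\mapsto \theta^a u(y,\theta^b z)$ multiplies both the mass and $\|u\|_{\dot{\Sigma}_y}^2$ by the \emph{same} factor $\theta^{2a-b}$, so every scaling that raises the mass of a general admissible competitor can push it out of the constraint set $\|u\|_{\dot{\Sigma}_y}^2\le\sqrt{\omega}$, and the minimality comparison breaks down. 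To legitimize the mass-rescaled trial functions $\sqrt{m/m'}\,v_n$ one must already know that near-minimizers at the sub-masses $m'$, $m-m'$ obey the much stronger bound $\|v_n\|_{\dot{\Sigma}_y}^2\lesssim \omega^{-1}$ — i.e.\ the forbidden-region consequence of Lemma \ref{GN type inequality} (Corollary \ref{cor: forbidden region}), applied at sub-masses and combined with $\omega\ge (C_{GN})^4m^2$. In other words, the strict-inactivity estimate that you defer to part (4) is needed \emph{inside} the compactness proof, not only for the Euler--Lagrange equation. Moreover, since the 3d functional is mass-supercritical, the strictness that kills dichotomy is obtained in the paper not from a scaling identity but from the quantitative comparison with the 1d problem: $\mathcal{J}_\omega(m')\le\mathcal{J}_\infty(m')<0$ gives a uniform lower bound on $\|v_n\|_{L_x^4}^4$ along minimizing sequences at sub-mass, which is what converts the splitting $\mathcal{J}_\omega(m)\ge\mathcal{J}_\omega(m')+\mathcal{J}_\omega(m-m')$ into a contradiction. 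Your closing sentence concedes that these points ``do not follow from classical scaling considerations,'' but the proposal does not supply the substitute argument, so as written this step would fail.

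A more minor shortfall: for part (3) your symmetrization argument only shows that \emph{some} minimizer can be taken nonnegative and symmetric-decreasing, whereas the statement asserts that \emph{every} minimizer is of the form $e^{i\theta}Q_\omega(y,z-z_0)$; that stronger conclusion needs the equality cases of the rearrangement/diamagnetic inequalities (the paper itself defers this to \cite{BBJV}). The remaining ingredients — boundedness of minimizing sequences, exclusion of vanishing, compactness in $y$ from the trap, and the inactive-constraint derivation of \eqref{3D EL equation} via the $O(\omega^{-1})$ bound on $\|Q_\omega\|_{\dot{\Sigma}_y}^2$ — agree with the paper's proof.
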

 
\begin{remark}\label{remark: existence remark}
\begin{enumerate} 
\item   Theorem \ref{th1} is proved in \cite[Theorem 1]{BBJV} in a slightly different setting\footnote{By scaling $u=\sqrt{\omega}  v(y,\sqrt{\omega}z)$, the  problem \eqref{minimization} is equivalent to the small mass constraint energy minimization
$$\begin{aligned}
 \inf\left\{\mathcal{E}(v): v\in \Sigma,\   M(v)=\frac{m}{\sqrt{\omega}} \textup{ and }\|\nabla_y v\|_{L_x^2(\mathbb{R}^3)}^2+  \|y v\|_{L_x^2(\mathbb{R}^3)}^2\leq 1+{2m}  \right\},
\end{aligned}$$
where $\mathcal{E}(v)=\frac12\|\nabla_x v\|_{L_x^2(\mathbb{R}^3)}^2+ \frac12 \|y v\|_{L_x^2(\mathbb{R}^3)}^2- \frac14\|v\|_{L_x^4(\mathbb{R}^3)}^4$ is the energy functional without the parameter $\omega$. The variational problem considered in \cite{BBJV} is similar to this rescaled problem but under a slightly stronger additional constraint $\|\nabla_x v\|_{L_x^2(\mathbb{R}^3)}^2+  \|y v\|_{L_x^2(\mathbb{R}^3)}^2\leq 1+{2m}$, where the full gradient norm bound, not for the partial gradient $\nabla_y v$, is imposed. As it will be sketched in Section \ref{minexis}, this change requires only minor modifications in the proof.}. 
\item As observed in \cite{BBJV}, a minimizer for $\mathcal{J}_\omega(m)$ solves the usual Euler--Lagrange equation \eqref{3D EL equation} because it is not located on the boundary of the constraint, i.e., the sphere $\|u\|_{\dot{\Sigma}_y}^2= \sqrt{\omega}$, which is forbidden in the function space (see Corollary \ref{cor: forbidden region}).
\end{enumerate}
\end{remark} 
\begin{remark}
The additional constraint $\|u\|_{\dot{\Sigma}_y}^2\leq  \sqrt{\omega}$ seems natural for the supercritical problem \eqref{minimization}, because according to the strong blow-up conjecture (see \cite{HR}), it is expected that any negative energy solution to the Cauchy problem \eqref{NLS} with initial data $\|u_0\|_{\dot{\Sigma}_y}^2>\sqrt{\omega}$ would blow up in finite time. This means that the steady state $Q_\omega(x)e^{i\mu_\omega t}$ may occupy the least energy among all global-in-time solutions having the same mass. Indeed, it is easy to show finite-time blow up in the finite variance case, i.e., $xu_0\in L_x^2(\mathbb{R}^3)$ (see Lemma  \ref{blow up lemma}), but it is difficult to eliminate the finite variance assumption \cite{HR}.
\end{remark} 

As shown in \cite[Theorem 2]{BBJV}, the minimizer $Q_\omega$ is asymptotically reduced to the one-dimensional state in the sense that as $\omega\to\infty$,
\begin{equation}\label{BBJV reduction}
Q_\omega(x)-Q_{\omega,\parallel}(z)\Phi_0(y)\to 0,
\end{equation}
where $Q_{\omega,\parallel}(z)=\langle Q_\omega(\cdot,z),\Phi_0\rangle_{L_y^2(\mathbb{R}^2)}$ is the $\Phi_0(y)$-directional component. This justifies the dimension reduction of the Bose--Einstein condensates in physical experiments.

In this paper, with reference to the dimension reduction limit, we examine the connection to the 1d minimization problem
\begin{equation}\label{1d minimization}
\mathcal{J}_\infty(m)=\inf_{w\in H^1_z(\mathbb{R})}\left\{E_\infty(w) : \|w\|_{L_z^2(\mathbb{R})}^2=m\right\},
\end{equation}
where
$$E_\infty(w):=\frac{1}{2}\|\partial_z w\|_{L_z^2(\mathbb{R})}^2-\frac{1}{8\pi}\|w\|_{L_z^4(\mathbb{R})}^4.$$
We recall that the problem $\mathcal{J}_\infty(m)$ possesses a positive symmetric decreasing ground state $Q_\infty$, and it solves the Euler--Lagrange equation 
\begin{equation}\label{1D EL equation}
-\partial_z^2Q_\infty-\frac{1}{2\pi}Q_\infty^3=-\mu_\infty Q_\infty
\end{equation}
with $\mu_\infty>0$ (see \cite[Theorem 8.1.6]{Caz}). Moreover, it is unique up to phase shift and translation (see \cite{K, MS1}).

Our first main result provides the derivation of the 1d ground state from the 3d minimizer in Theorem \ref{th1} with a precise rate of convergence. 

\begin{theorem}[Dimension reduction limit to the 1d ground state]\label{h1cv}
For a sufficiently large $\omega\geq 1$, let $Q_\omega$ be a minimizer for the problem $\mathcal{J}_\omega(m)$ constructed in Theorem \ref{th1}. Then, we have
$$\|Q_\omega(x)-Q_\infty(z)\Phi_0(y)\|_{\Sigma}\lesssim\frac{1}{\sqrt{\omega}}$$
and
$$\mu_\omega=\mu_\infty+O(\omega^{-1}),$$
where $\Phi_0(y)=\frac{1}{\sqrt{\pi}}e^{-\frac{|y|^2}{2}}$ is the lowest eigenstate to the 2d Hermite operator $H_y$, and $\mu_\omega$ (resp., $\mu_\infty$) is the Lagrange multiplier in \eqref{3D EL equation} (resp., \eqref{1D EL equation}). 
\end{theorem}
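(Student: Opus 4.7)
The plan is to decompose $Q_\omega(x)=Q_\parallel(z)\Phi_0(y)+Q_\perp(x)$ with $Q_\perp(\cdot,z)\perp\Phi_0$ in $L^2_y(\mathbb R^2)$ for every $z$, control $Q_\perp$ quantitatively via an energy comparison plus a spectral gap, and identify $Q_\parallel$ with the 1d ground state via non-degeneracy. The starting point is the trial-function bound: since $(H_y-2)\Phi_0=0$, the product $Q_\infty\Phi_0$ satisfies the additional constraint in $\mathcal J_\omega(m)$ trivially for large $\omega$, and using $\|\Phi_0\|_{L^4_y}^4=\tfrac{1}{2\pi}$ one computes $E_\omega(Q_\infty\Phi_0)=E_\infty(Q_\infty)=\mathcal J_\infty(m)$, so $\mathcal J_\omega(m)\le\mathcal J_\infty(m)$.

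Because the second eigenvalue of $H_y$ is $4$, the spectral gap $H_y-2\ge 2$ on $\Phi_0^\perp$ yields $\|Q_\omega\|_{\dot\Sigma_y}^2=\|Q_\perp\|_{\dot\Sigma_y}^2\ge 2\|Q_\perp\|_{L^2}^2$. The energy decomposes as
\[
E_\omega(Q_\omega)=E_\infty(Q_\parallel)+\tfrac{\omega}{2}\|Q_\perp\|_{\dot\Sigma_y}^2+\tfrac12\|\partial_z Q_\perp\|_{L^2}^2-\tfrac14\bigl(\|Q_\omega\|_{L^4}^4-\tfrac{1}{2\pi}\|Q_\parallel\|_{L^4_z}^4\bigr),
\]
where the $L^4$ difference is $O(\|Q_\perp\|_{L^2})$ by H\"older, its linear-in-$Q_\perp$ part being $4\int Q_\parallel^3\langle(\Phi_0^3)_\perp,Q_\perp\rangle_y\,dz$ with $(\Phi_0^3)_\perp:=\Phi_0^3-\tfrac{1}{2\pi}\Phi_0$. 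Combining with $E_\infty(Q_\parallel)\ge\mathcal J_\infty(\|Q_\parallel\|_{L^2}^2)\ge\mathcal J_\infty(m)$ (by the decreasing monotonicity of $\mathcal J_\infty$) and $E_\omega(Q_\omega)\le\mathcal J_\infty(m)$, I obtain
\[
\tfrac{\omega}{2}\|Q_\perp\|_{\dot\Sigma_y}^2+\tfrac12\|\partial_z Q_\perp\|_{L^2}^2\lesssim\|Q_\perp\|_{L^2},
\]
which together with $\|Q_\perp\|_{L^2}^2\le\tfrac12\|Q_\perp\|_{\dot\Sigma_y}^2$ forces $\|Q_\perp\|_{L^2},\|Q_\perp\|_{\dot\Sigma_y}\lesssim\omega^{-1}$ and $\|\partial_z Q_\perp\|_{L^2}\lesssim\omega^{-1/2}$, so $\|Q_\perp\|_\Sigma\lesssim\omega^{-1/2}$. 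Returning to the expansion gives $E_\infty(Q_\parallel)\le\mathcal J_\infty(m)+O(\omega^{-1})$ with $|\|Q_\parallel\|_{L^2}^2-m|=O(\omega^{-2})$; the uniqueness of the 1d ground state \cite{K,MS1} combined with the coercivity of $L_+=-\partial_z^2+\mu_\infty-\tfrac{3}{2\pi}Q_\infty^2$ on the orbit-orthogonal subspace then gives $\|Q_\parallel-Q_\infty\|_{H^1_z}\lesssim\omega^{-1/2}$ (no translation is needed since $Q_\omega$, and hence $Q_\parallel$, is symmetric-decreasing in $z$). Summing the two contributions yields the claimed $\Sigma$-bound.

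For the Lagrange multipliers I project \eqref{3D EL equation} onto $\Phi_0$ in $L^2_y$, obtaining
\[
-\partial_z^2 Q_\parallel+\mu_\omega Q_\parallel=\tfrac{1}{2\pi}Q_\parallel^3+\mathcal R_\omega(z),\qquad \|\mathcal R_\omega\|_{L^2_z}\lesssim\omega^{-1},
\]
where $\mathcal R_\omega$ collects the cross-terms in $Q_\perp$, whose leading piece is $3Q_\parallel^2\langle(\Phi_0^3)_\perp,Q_\perp\rangle_y=O(\|Q_\perp\|_{L^2_y})$. Subtracting \eqref{1D EL equation} and setting $R=Q_\parallel-Q_\infty$, $\delta\mu=\mu_\omega-\mu_\infty$, one arrives at $L_+R+\delta\mu\,Q_\infty=\rho$ with $\|\rho\|_{L^2_z}\lesssim\omega^{-1}+\|R\|_{L^2}^2+|\delta\mu|\,\omega^{-1/2}$. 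Pairing with $L_+^{-1}Q_\infty$ (well defined since $Q_\infty\perp\ker L_+=\operatorname{span}(\partial_z Q_\infty)$ by parity) and using the mass identity $\langle R,Q_\infty\rangle_{L^2_z}=-\tfrac12\|R\|_{L^2}^2+O(\omega^{-2})=O(\omega^{-1})$ isolates $\delta\mu$ through the nonzero scalar $\langle Q_\infty,L_+^{-1}Q_\infty\rangle$, yielding $|\delta\mu|\lesssim\omega^{-1}$. This last step is the main obstacle: a naive pairing with $Q_\infty$ only gives $\omega^{-1/2}$, and the sharp $\omega^{-1}$ rate requires the Lyapunov-Schmidt-type argument above, which jointly exploits the mass constraint and the non-degeneracy of $L_+$.
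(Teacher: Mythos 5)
Your argument is correct in outline and reaches both conclusions, but it does so by a genuinely different route than the paper in its key technical steps, even though the decomposition $Q_\omega=P_0Q_\omega+P_1Q_\omega$ and the trial bound $\mathcal{J}_\omega(m)\le\mathcal{J}_\infty(m)$ (Lemma \ref{upp}) are the same. For the vanishing of $Q_\perp=P_1Q_\omega$ you argue purely variationally: spectral gap $H_y-2\ge\Lambda_1-2$ on $\Phi_0^\perp$, the scaling identity $\mathcal{J}_\infty(m)=m^3\mathcal{J}_\infty(1)<0$ (hence monotonicity), and the energy split, which self-improves to $\|Q_\perp\|_{L^2},\|Q_\omega\|_{\dot\Sigma_y}\lesssim\omega^{-1}$, $\|\partial_zQ_\perp\|_{L^2}\lesssim\omega^{-1/2}$; the paper instead gets the $\omega^{-1}$ bounds from the Euler--Lagrange equation together with $\mu_\omega\ge0$ and the uniform elliptic-regularity bounds of Lemma \ref{hig} (Lemma \ref{shp}), and only the $\partial_z$ bound comes from an energy comparison like yours. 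Your route is more elementary here; note only that your ``by H\"older'' step needs the uniform $H^1$ bound of Lemma \ref{basic uniform bound} (i.e.\ \eqref{lba}) to control the $L^6$ norms. For the parallel component you use a quadratic energy expansion plus coercivity of $\mathcal{L}_\infty$, which requires the qualitative convergence $Q_{\omega,\parallel}\to Q_\infty$ first (the paper makes this explicit via the minimizing-sequence property \eqref{co1} and uniqueness) and only yields $\|Q_{\omega,\parallel}-Q_\infty\|_{H^1_z}\lesssim\omega^{-1/2}$; the paper instead applies the non-degeneracy resolvent estimate of Lemma \ref{non-degeneracy estimate} to the projected Euler--Lagrange equation and obtains the sharper rate $\omega^{-1}$ of Remark \ref{conv remark}, though your weaker rate still suffices for the stated $\Sigma$-bound since $\partial_z(P_1Q_\omega)$ is the bottleneck. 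Finally, for $\mu_\omega=\mu_\infty+O(\omega^{-1})$ the paper uses Pohozaev identities (pairing \eqref{3D EL equation} with $Q_\omega$ and $z\partial_zQ_\omega$) plus the minimum-energy convergence, which is short and needs no spectral input; your Lyapunov--Schmidt pairing with $\mathcal{L}_\infty^{-1}Q_\infty$ also works, but it additionally requires justifying that $\langle Q_\infty,\mathcal{L}_\infty^{-1}Q_\infty\rangle\neq0$ (true here because the 1d cubic problem is mass-subcritical, so $\partial_\mu\|Q_\mu\|_{L^2}^2>0$ along the soliton family) and that $\mathcal{L}_\infty^{-1}Q_\infty$ is well defined via parity; you should state these points if you carry out the details.
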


\begin{remark}\label{conv remark}
 More precisely, Theorem \ref{h1cv} is broken into the following: 
\begin{enumerate}
\item (3d-to-1d estimates) We have $\|Q_\omega(x)-Q_{\omega,\parallel}(z)\Phi_0(y)\|_{L_x^2(\mathbb{R}^3)\cap \dot{\Sigma}_y}\lesssim\frac{1}{\omega}$. However, for the $z$-directional derivative norm, only a weaker convergent rate $\|\partial_z (Q_\omega(x)-Q_{\omega,\parallel}(z)\Phi_0(y))\|_{L_x^2(\mathbb{R}^3)}\lesssim\frac{1}{\sqrt{\omega}}$ is obtained (see Lemma \ref{shp}).
\item (Derivation of the 1d ground state) For the $\Phi_0$-component, we have a better bound $\|Q_{\omega,\parallel}(z)\Phi_0(y)- Q_\infty(z)\Phi_0(y)\|_{H^1_x(\R^3)}=\|Q_{\omega,\parallel}(z)- Q_\infty(z)\|_{H^1_z(\R)}\lesssim\frac{1}{\omega}$ (see \eqref{reduced convergent estimate})
\end{enumerate}
\end{remark}
\begin{remark}
Theorem \ref{h1cv} improves the previous result  \cite[Theorem 2]{BBJV} in two aspects. First, for the dimension reduction \eqref{BBJV reduction}, the $O(\frac{1}{\sqrt{\omega}})$-rate of convergence in \cite[Theorem 2]{BBJV} is improved to $O(\frac{1}{\omega})$. Secondly, the limit profile is clearly characterized as the 1D ground state $Q_\infty$. 
\end{remark}

\begin{remark}\label{point wise}
Dimension reduction also holds in high Sobolev norms (see Remark \ref{high Sobolev norm bounds}). In particular, $Q_\omega(x)\to Q_\infty(z)\Phi_0(y)$ point-wisely. 
\end{remark}

Next, using the dimension-reduction limit and  the spectral properties of the 1d ground state to \eqref{1d minimization}, we establish the uniqueness of the 3d minimizer. 
\begin{theorem}[Uniqueness]\label{uniqu}
For a sufficiently large $\omega\geq 1$, the minimizer $Q_\omega$ of the problem $\mathcal{J}_\omega(m)$, constructed in Theorem \ref{th1}, is unique.
\end{theorem}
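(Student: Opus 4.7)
My plan is to argue by contradiction, combining the quantitative dimension-reduction estimate of Theorem~\ref{h1cv} with the spectral analysis of the linearized operator at the 1d ground state, and exploiting the even-in-$z$ symmetry, the mass constraint, and the Vakhitov--Kolokolov condition $\partial_\mu\|Q_\infty\|_{L_z^2}^2>0$ for the 1d cubic NLS.

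Suppose along some sequence $\omega_n\to\infty$ there exist two distinct non-negative, symmetric-decreasing minimizers $Q_n^{(1)}\neq Q_n^{(2)}$ of $\mathcal{J}_{\omega_n}(m)$ with Lagrange multipliers $\mu_n^{(i)}$. Theorem~\ref{h1cv} gives $\|Q_n^{(i)}-Q_\infty\Phi_0\|_\Sigma\lesssim\omega_n^{-1/2}$ and $\mu_n^{(i)}\to\mu_\infty$. Set $R_n:=Q_n^{(1)}-Q_n^{(2)}$, $\varepsilon_n:=\|R_n\|_\Sigma>0$, $r_n:=R_n/\varepsilon_n$, and $\lambda_n:=(\mu_n^{(1)}-\mu_n^{(2)})/\varepsilon_n$. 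Subtracting the two Euler--Lagrange equations~\eqref{3D EL equation} yields
\begin{equation*}
\omega_n(H_y-2)r_n-\partial_z^2 r_n+\mu_n^{(1)}r_n-V_n r_n=-\lambda_n Q_n^{(2)},
\end{equation*}
with $V_n:=\int_0^1 3\bigl(tQ_n^{(1)}+(1-t)Q_n^{(2)}\bigr)^2 dt\to 3Q_\infty^2\Phi_0^2$. Decompose $r_n(y,z)=\alpha_n(z)\Phi_0(y)+r_n^\perp(x)$ with $r_n^\perp(\cdot,z)\perp\Phi_0$ in $L_y^2$. Since $H_y-2\geq 2$ on $\Phi_0^\perp$ (the next Hermite eigenvalue is $4$), testing the equation against $r_n^\perp$ yields $\|r_n^\perp\|_\Sigma=O(\omega_n^{-1/2})$, so $\|\alpha_n\Phi_0\|_\Sigma\to 1$.

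Projecting onto $\Phi_0$ and using $(H_y-2)\Phi_0=0$ and $\int\Phi_0^4\,dy=\tfrac{1}{2\pi}$, one obtains
\begin{equation*}
L_+\alpha_n=-\lambda_n Q_{n,\parallel}^{(2)}+o_{H_z^{-1}}(1),\qquad L_+:=-\partial_z^2-\tfrac{3}{2\pi}Q_\infty^2+\mu_\infty,
\end{equation*}
where $Q_{n,\parallel}^{(2)}:=\langle Q_n^{(2)},\Phi_0\rangle_{L_y^2}\to Q_\infty$ in $L_z^2$. By Kwong~\cite{K}, $\ker L_+=\mathrm{span}\{Q_\infty'\}$, and $L_+$ has exactly one simple negative eigenvalue with a positive spectral gap above zero. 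Evenness in $z$ forces $\alpha_n$ even, killing the odd kernel direction. The mass constraint $M(Q_n^{(1)})=M(Q_n^{(2)})$, written as $\int(Q_n^{(1)}+Q_n^{(2)})R_n\,dx=0$, yields $\langle Q_\infty,\alpha_n\rangle_{L_z^2}\to 0$. Boundedness of $\lambda_n$ follows by pairing the projected equation with $Q_\infty$ and using $L_+Q_\infty=-\tfrac{1}{\pi}Q_\infty^3\in L_z^2$.

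Extract a subsequence with $\alpha_n\rightharpoonup\alpha_*$ in $H_z^1$ and $\lambda_n\to\lambda_*$. The limit satisfies $L_+\alpha_*=-\lambda_*Q_\infty$, with $\alpha_*$ even and $\langle Q_\infty,\alpha_*\rangle=0$. Pairing this with $\partial_\mu Q_\infty$ (which solves $L_+\partial_\mu Q_\infty=-Q_\infty$) gives $\langle\alpha_*,Q_\infty\rangle=\tfrac12\lambda_*\partial_\mu\|Q_\infty\|_{L_z^2}^2$; since the RHS coefficient is strictly positive (Vakhitov--Kolokolov), $\lambda_*=0$, hence $\alpha_*\in\ker L_+$, hence $\alpha_*=0$ by parity. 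Strong convergence $\alpha_n\to 0$ in $H_z^1$ then follows from the coercivity of $L_+$ on $\{Q_\infty,Q_\infty'\}^\perp$, contradicting $\|\alpha_n\Phi_0\|_\Sigma\to 1$. The hardest step will be closing this quantitative loop simultaneously in $\alpha_n$, $r_n^\perp$ and $\lambda_n$: each is small only modulo the others, and the remainder $V_n-3Q_\infty^2\Phi_0^2$ decays only at rate $\omega_n^{-1/2}$, so one must lean on the explicit algebraic rate of Theorem~\ref{h1cv} (rather than a mere $o(1)$) to absorb all cross-terms. A related subtlety is that the mass constraint only yields $\langle Q_\infty,\alpha_n\rangle=o(1)$ instead of exact orthogonality, forcing the coercivity to be established via the spectral decomposition $\alpha_n=a_-\phi_-+a_0 Q_\infty'+\alpha_n^+$ of $L_+$ and separate control of each component.
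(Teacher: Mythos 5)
Your proposal is correct in substance, but it follows a genuinely different route from the paper. The paper does \emph{not} subtract the two Euler--Lagrange equations; instead it sets $I_\omega(u)=E_\omega(u)+\frac{\mu_\omega}{2}M(u)$, writes the second minimizer as $\tilde{Q}_\omega=\sqrt{1-\delta_\omega^2}\,Q_\omega+R_\omega$ with $R_\omega\perp Q_\omega$ in $L^2_x$, expands $I_\omega(\tilde Q_\omega)$ to second order in $R_\omega$, and concludes from the coercivity of the 3d linearized operator $\mathcal{L}_\omega$ on symmetric functions orthogonal to $Q_\omega$ (Proposition \ref{coe}, itself transferred from the 1d coercivity \eqref{1d linearized operator coercivity}); notably, the paper's non-degeneracy proposition is stated but explicitly not used for uniqueness. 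Your route is the Lenzmann-style one: normalize the difference, peel off the $P_1$ part using the spectral gap of $H_y-2$, project onto $\Phi_0$, and run a compactness argument against Kwong's kernel characterization, evenness, the approximate orthogonality to $Q_\infty$ coming from the equal masses, and the Vakhitov--Kolokolov sign. What the paper's approach buys is that only one Lagrange multiplier ever appears (it is absorbed into $I_\omega$), so the quantity $\lambda_n=(\mu_n^{(1)}-\mu_n^{(2)})/\varepsilon_n$ that you must control never arises and no bootstrap between $\alpha_n$, $r_n^\perp$, $\lambda_n$ is needed; what your approach buys is that it runs entirely at the level of the equation, is the natural road to non-degeneracy and Lipschitz dependence, and in fact needs less than you fear. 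Two remarks on your flagged difficulties: (i) the loop closes with purely qualitative $o_\omega(1)$ information (uniform $L^\infty$ convergence of $Q_\omega$ from Remark \ref{high Sobolev norm bounds} suffices), and the cleanest ordering is to bound $\lambda_n$ \emph{first} by pairing the full 3d difference equation with $Q_n^{(2)}$, using $\|(H_y-2)Q_n^{(2)}\|_{L^2_x}\lesssim\omega_n^{-1}$ and the uniform bounds of Lemmas \ref{hig}--\ref{shp}, after which the $r_n^\perp$ and projected-equation estimates are decoupled; (ii) the Vakhitov--Kolokolov step is actually dispensable, since the mass constraint already gives $\langle Q_{n,\parallel}^{(2)},\alpha_n\rangle_{L^2_z}\to 0$, hence $\langle \mathcal{L}_\infty\alpha_n,\alpha_n\rangle_{L^2_z}\to 0$ for bounded $\lambda_n$, and then \eqref{1d linearized operator coercivity} (with the almost-orthogonality) forces $\|\alpha_n\|_{L^2_z}\to0$; feeding this back into the quadratic form also kills $\|\partial_z\alpha_n\|_{L^2_z}$, so weak convergence to $0$ upgrades to the strong contradiction with $\|\alpha_n\Phi_0\|_{\Sigma}\to1$, exactly as you indicate.
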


As a direct consequence, combining Theorem \ref{th1} and \ref{uniqu} by the standard argument of Cazenave and Lions \cite{CL} together with a suitable global well-posedness (Proposition \ref{prop: global existence}), we prove the orbital stability for the 3d NLS \eqref{NLS}. 

\begin{theorem}[Orbital stability]\label{stab}
For a sufficiently large $\omega\geq 1$, let $Q_\omega$ be the unique minimizer of the variational problem $\mathcal{J}_\omega(m)$ constructed in Theorem \ref{th1}. Then, for any $\epsilon>0$, there exists $\delta>0$ such that if $\| u_0(x)-Q_\omega(x)\|_{\Sigma} \le \delta$, then the global solution $u_\omega(t)\in C_t(\mathbb{R};\Sigma)$ to the 3d NLS \eqref{NLS} with initial data $u_0$ satisfies
$$\inf_{z_1, \theta_1\in \R}   \|u_\omega(t,x)-e^{i\theta_1}Q_\omega(y,z-z_1)\|_{ {\Sigma}}  \le \epsilon\quad\textup{for all }t\in\mathbb{R}.$$
\end{theorem}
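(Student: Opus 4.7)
The plan is to follow the classical Cazenave--Lions compactness argument \cite{CL}, taking advantage of the fact that the uniqueness Theorem \ref{uniqu} collapses the set of minimizers to a single orbit, so that the conditional compactness of minimizing sequences in Theorem \ref{th1}(1) upgrades directly to orbital stability of $Q_\omega$ itself rather than of a set of minimizers.

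Suppose for contradiction that stability fails: there exist $\epsilon_0>0$, initial data $u_{0,n}\to Q_\omega$ in $\Sigma$, and times $t_n\in\R$ with
$$\inf_{z_1,\theta_1\in\R}\bigl\|u_n(t_n,x)-e^{i\theta_1}Q_\omega(y,z-z_1)\bigr\|_\Sigma\ge\epsilon_0,$$
where each $u_n\in C_t(\R;\Sigma)$ is the global-in-time solution supplied by Proposition \ref{prop: global existence} for $n$ large. Conservation of mass and energy together with the continuity of $M$ and $E_\omega$ on $\Sigma$ yield $M(u_n(t_n))\to m$ and $E_\omega(u_n(t_n))\to\mathcal{J}_\omega(m)$; a harmless rescaling $u_n(t_n)\mapsto\sqrt{m/M(u_n(t_n))}\,u_n(t_n)$ fixes the mass at exactly $m$ without spoiling the energy convergence. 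Assuming that the auxiliary constraint $\|u_n(t_n)\|_{\dot{\Sigma}_y}^2\le\sqrt{\omega}$ also holds, the rescaled sequence is a minimizing sequence for $\mathcal{J}_\omega(m)$, and then Theorem \ref{th1}(1) combined with Theorem \ref{uniqu} produces $\theta_n,z_n\in\R$ (along a subsequence) with $e^{i\theta_n}u_n(t_n,y,z-z_n)\to Q_\omega$ in $\Sigma$, contradicting the standing assumption.

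The main obstacle I anticipate is the dynamical preservation of the constraint $\|u_n(t)\|_{\dot{\Sigma}_y}^2\le\sqrt{\omega}$. The plan is as follows. By Corollary \ref{cor: forbidden region}, $\|Q_\omega\|_{\dot{\Sigma}_y}^2<\sqrt{\omega}$ strictly, so $\|u_{0,n}\|_{\dot{\Sigma}_y}^2<\sqrt{\omega}$ for $n$ large by continuity. The set $S_n:=\{t\in\R:\|u_n(t)\|_{\dot{\Sigma}_y}^2\le\sqrt{\omega}\}$ is closed in $\R$ by the time continuity of $u_n$ in $\Sigma$. Suppose $S_n\ne\R$ and let $T_n$ be the first boundary time, so that $\|u_n(T_n)\|_{\dot{\Sigma}_y}^2=\sqrt{\omega}$ while $M(u_n(T_n))\to m$ and $E_\omega(u_n(T_n))\to\mathcal{J}_\omega(m)$. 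Then $\{u_n(T_n)\}$ (after the same mass rescaling) is itself a minimizing sequence for $\mathcal{J}_\omega(m)$, whence Theorem \ref{th1}(1) together with Theorem \ref{uniqu} gives convergence to a translate of $Q_\omega$ in $\Sigma$, forcing $\|Q_\omega\|_{\dot{\Sigma}_y}^2=\sqrt{\omega}$ and contradicting Corollary \ref{cor: forbidden region}. Hence $u_n(t)$ stays in the admissible set for all $t\in\R$, and the contradiction established in the previous paragraph closes the argument.
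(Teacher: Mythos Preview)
Your proposal is correct and is precisely the standard Cazenave--Lions argument that the paper invokes (combining Theorem~\ref{th1}, Theorem~\ref{uniqu}, and Proposition~\ref{prop: global existence}). Note that your second paragraph is unnecessary: once Proposition~\ref{prop: global existence} is applied to $u_{0,n}$ (with mass $m_n\to m$), the bound \eqref{eq: global existence} already guarantees $\|u_n(t)\|_{\dot\Sigma_y}^2\le\tfrac{(C_{GN})^2 m_n^3}{2\omega}<\sqrt{\omega}$ for all $t$, so the constraint is preserved automatically and no separate trapping argument is needed.
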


\begin{remark}
\begin{enumerate}
\item Since $\|\cdot\|_\Sigma$ is equivalent to $\|\cdot\|_{\dot{\Sigma}_y}+\|\partial_z (\cdot)\|_{L^2_x(\R^3)}+\|\cdot\|_{L^2_x(\R^3)}$ on $\Sigma$, the norm $\|\cdot\|_\Sigma$ stated in Theorem \ref{stab} can be replaced by $\|\cdot\|_{\dot{\Sigma}_y}+\|\partial_z (\cdot)\|_{L^2_x(\R^3)}+\|\cdot\|_{L^2_x(\R^3)}$.
\item By scaling, Theorem \ref{stab} corresponds to the orbital stability in \cite[Theorem 1]{BBJV}; however, by the uniqueness (Theorem \ref{uniqu}), the possibility of transition from one minimizer to another is eliminated. 
\end{enumerate}
\end{remark}

To sum up, the main contribution of this papers is to clarify the quasi-lower dimensional properties of partially confined BECs; we establish the emergence of the 1D ground state from the 3D energy minimization problem with a precise rate of convergence, but we also prove uniqueness and dynamical stability of the 3D minimizer. These results are based on the introduction of the setup \eqref{minimization}, which we think fits better for the dimension reduction. Indeed, this formulation is consistent with the setup of the mean-field limit in Chen and Holmer \cite{CH}.  
Also, the strong confinement formulation $(\omega\to\infty)$ seems to provide a clearer picture of the dimension-reduction process than the small-mass-limit formulation (see \cite{BBJV} for the small-mass-limit). 
Indeed, we note that if $\omega$ is sufficiently large, the Hermite operator $\omega (H_y-2)$ acts completely differently on the lowest and higher eigenstates. Thus, our intuition naturally leads to the modification of the Gagliardo--Nirenberg inequality (see Lemma \ref{GN type inequality}). 
In our analysis, this modified inequality plays a fundamental role in handling the anisotropic operator $\omega (H_y-2)-\partial_z^2$. For instance, it is employed to prove the conditional global existence (Proposition \ref{prop: global existence}) and some properties of the 3d minimizer $Q_\omega$, such as the concentration of the lowest eigenstate (Lemma \ref{basic uniform bound} and \ref{shp}). It is also helpful in the proof of the existence of a minimizer (Theorem \ref{th1}).

Once the dimension reduction limit of a 3d energy minimizer (Theorem \ref{h1cv}) has been justified, we use convergence to prove its uniqueness (Theorem \ref{uniqu}). A key step is to obtain a coercivity estimate of the linearized operator at the 3d minimizer $Q_\omega$ (Proposition \ref{coe}). It can be shown by transferring the coercivity of the linearized operator for the 1d ground state $Q_\infty$ via the dimension reduction limit. The non-degeneracy of the 3d linearized operator can also be proved by dimension reduction, as in \cite{Lenzmann}. Then, we show that if there are two minimizers, comparing modified energies, the coercivity bound deduces a contradiction. 

Finally, we note that our proof relies on the fact that even though the 3d minimization problem \eqref{minimization} is mass-supercritical, the limiting 1d problem \eqref{1d minimization} is mass-subcritical and has a ground state. Thus, the main results of this study can be extended to general dimensions and nonlinearities in a similar situation. However, another physically relevant 3d-to-2d reduction problem cannot be treated by the current method because the limiting 2d cubic NLS is mass-critical.

\subsection{Organization of the paper}
The remainder of this paper is organized as follows. In Section \ref{basic}, we introduce the notations used in this paper and prove some preliminary estimates. Global well-posedness is given for the trapped solutions to the 3d NLS. In Section \ref{minexis}, we prove the existence of a minimizer for $\mathcal{J}_\omega(m)$ and state some uniform bounds for a minimizer and vanishing rate of the projection of a minimizer onto higher eigenstates of the 2d Hermite operator. Section \ref{dimreduc} is devoted to the study of the convergence rate from 3d to the 1d NLS. In Section \ref{uniqsec}, we study the linearized operator for the 3d NLS and prove the uniqueness of the minimizer. The dimension reduction of the Cauchy problem is included in the Appendix for the readers' convenience. 

\subsection{Acknowledgement}
This work was supported by National Research Foundation of Korea (NRF) grant funded by the Korean government (MSIT) (No. NRF-2020R1A2C4002615)

\section{Basic tools, and global existence for trapped solutions}\label{basic}

In this section, we introduce basic analysis tools to deal with the anisotropic elliptic operator $\omega H_y-\partial_z^2$, and prove that if a negative energy solution to the time-dependent 3d NLS \eqref{NLS} initially obeys the constraint in the variational problem \eqref{minimization}, then it exists globally in time, and a refined constraint is satisfied for all times (see Proposition \ref{prop: global existence} below). We remark that proving such a conditional global well-posedness is a prerequisite for orbital stability because the standard approach by Cazenave and Lions \cite{CL} can be applied under the assumption that any perturbed state satisfies the constraint for all time.

\subsection{Spectral representation, and notations}
For the 2d Hermite operator $H_y=-\Delta_y+|y|^2$, let $\{\Phi_j\}_{j=0}^\infty\subset L_y^2(\mathbb{R}^2)$ be the collection of $L_y^2(\mathbb{R}^2)$-normalized eigenfunctions, that is, 
$$H_y \Phi_j=\Lambda_j \Phi_j,$$
with eigenvalues $\Lambda_0< \Lambda_1\leq \Lambda_2\leq \cdots$ in a non-decreasing order. We recall that $\{\Phi_j\}_{j=0}^\infty$ forms an orthonormal basis of $L_y^2(\mathbb{R}^2)$, and that the lowest eigenvalue is $\Lambda_0=2$, and the corresponding eigenstate is given by $\Phi_0(y)=\frac{1}{\sqrt{\pi}}e^{-\frac{|y|^2}{2}}$. From the spectral representation, the function $u\in L_x^2(\mathbb{R}^3)$ can be written as
\begin{equation}\label{spectral representation}
u(x)=\sum_{j=0}^\infty \langle u(\cdot,z), \Phi_j\rangle_{L_y^2(\mathbb{R}^2)}\Phi_j(y),
\end{equation}
where
$$\langle u(\cdot,z), \Phi_j\rangle_{L_y^2(\mathbb{R}^2)}=\int_{\mathbb{R}^2}u(y,z)\Phi_j(y)dy:\mathbb{R}_z\to\mathbb{C}.$$
In particular, we denote the $\Phi_0(y)$-directional component of $u=u(x):\mathbb{R}_x^3\to\mathbb{C}$ by
$$u_\parallel(z)=\langle u(\cdot,z), \Phi_0\rangle_{L_y^2(\mathbb{R}^2)}:\mathbb{R}_z\to\mathbb{C}.$$
We define the 2d projection onto the lowest eigenspace by
$$(P_0u)(x):=u_\parallel(z)\Phi_0(y):\mathbb{R}_x^3\to\mathbb{C},$$
and let $P_1=1-P_0$ be the projection to the orthogonal complement, precisely,
$$(P_1u)(x)=\sum_{j=1}^\infty \langle u(\cdot,z), \Phi_j\rangle_{L_y^2(\mathbb{R}^2)}\Phi_j(y):\mathbb{R}_x^3\to\mathbb{C}.$$
Then, we have
$$\|u\|_{L_x^2(\mathbb{R}^3)}^2=\sum_{j=0}^\infty \big\|\langle u(\cdot,z), \Phi_j\rangle_{L_y^2(\mathbb{R}^2)}\big\|_{L_z^2(\mathbb{R})}^2=\|P_0u\|_{L_x^2(\mathbb{R}^3)}^2+\|P_1u\|_{L_x^2(\mathbb{R}^3)}^2.$$
Using the spectral representation, we prove the following interpolation inequality.
\begin{lemma}[Interpolation inequality]\label{interpo}
For any $k\in\mathbb{N}$ and $\theta\in(0,1)$, we have
$$
\| (H_y  -\partial_z^2)^k u\|_{L^2_x(\R^3)}\le  \|u\|_{L^2_x(\R^3)}^{1-\theta}\| (H_y  -\partial_z^2)^\frac{k}{\theta}u\|_{L^2_x(\R^3)}^\theta.
$$
\end{lemma}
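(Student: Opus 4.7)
The plan is to apply the spectral theorem for the (commuting) self-adjoint operators $H_y$ and $-\partial_z^2$, followed by a Hölder inequality on the spectral side. Concretely, using the Hermite expansion \eqref{spectral representation} together with the Fourier transform in the $z$-variable, I would write
$$u(x)=\sum_{j=0}^\infty u_j(z)\Phi_j(y),\qquad u_j(z)=\langle u(\cdot,z),\Phi_j\rangle_{L_y^2(\R^2)},$$
so that by Plancherel and the orthonormality of $\{\Phi_j\}$, for any real $\ell\geq 0$,
$$\|(H_y-\partial_z^2)^\ell u\|_{L_x^2(\R^3)}^2=\sum_{j=0}^\infty\int_{\R}(\Lambda_j+\xi^2)^{2\ell}|\widehat{u_j}(\xi)|^2\,d\xi.$$
Since $\Lambda_j\geq 2$, the symbol $\Lambda_j+\xi^2$ is strictly positive and the fractional powers are unambiguously defined by this formula.

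With this representation in hand, the pointwise identity
$$(\Lambda_j+\xi^2)^{2k}|\widehat{u_j}(\xi)|^2=\Bigl[(\Lambda_j+\xi^2)^{2k/\theta}|\widehat{u_j}(\xi)|^2\Bigr]^{\theta}\cdot\bigl[|\widehat{u_j}(\xi)|^2\bigr]^{1-\theta}$$
together with Hölder's inequality with conjugate exponents $1/\theta$ and $1/(1-\theta)$ (applied to the product of the counting measure in $j$ and Lebesgue measure in $\xi$) yields
$$\|(H_y-\partial_z^2)^k u\|_{L_x^2(\R^3)}^2\leq \|(H_y-\partial_z^2)^{k/\theta}u\|_{L_x^2(\R^3)}^{2\theta}\,\|u\|_{L_x^2(\R^3)}^{2(1-\theta)},$$
and taking square roots gives the claimed bound.

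There is essentially no obstacle here; the only point worth checking is that the spectral/Fourier expansion converges in $L_x^2$ on the domain of $(H_y-\partial_z^2)^{k/\theta}$, outside of which the right-hand side is infinite and the inequality is trivial. Equivalently, the whole argument can be phrased abstractly via the functional calculus: for any non-negative self-adjoint operator $A$ with spectral resolution $dE_\lambda$, Hölder applied to the positive measure $d\|E_\lambda u\|^2$ gives $\|A^k u\|^2\leq\|A^{k/\theta}u\|^{2\theta}\|u\|^{2(1-\theta)}$, which is exactly the desired statement with $A=H_y-\partial_z^2$.
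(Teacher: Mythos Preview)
Your proof is correct and follows essentially the same route as the paper: expand $u$ in Hermite eigenfunctions in $y$, apply Plancherel in $z$, and then interpolate the weight $(\Lambda_j+\xi^2)^{2k}$ via H\"older with exponents $1/\theta$ and $1/(1-\theta)$. The only difference is that the paper phrases the last step as ``the standard interpolation inequality'' without writing out the H\"older argument, whereas you make it explicit (and additionally note the abstract functional-calculus formulation).
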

\begin{proof}
From the spectral representation \eqref{spectral representation} with $c_j(z)=\langle u(\cdot,z), \Phi_j\rangle_{L_y^2(\mathbb{R}^2)}$, we write 
$$((H_y  -\partial_z^2)^k u)(x)=\sum_{j=0}^\infty ((\Lambda_j  -\partial_z^2)^k c_j)(z) \Phi_j(y).$$
Then, applying the Plancherel theorem with respect to the $z$-variable, we obtain 
$$\| (H_y  -\partial_z^2)^k \varphi\|_{L^2_x(\R^3)}^2=\sum_{j=0}^\infty\big\|((\Lambda_j  -\partial_z^2)^k c_j)(z)\big\|_{L_z^2(\mathbb{R})}^2=\frac{1}{2\pi}\sum_{j=0}^\infty\big\|(\Lambda_j+\xi^2)^k\hat{c}_j(\xi)\big\|_{L_\xi^2(\mathbb{R})}^2.$$
Hence, by the standard interpolation inequality, it follows that 
$$\| (H_y  -\partial_z^2)^k \varphi\|_{L^2_x(\R^3)}^2\leq\left\{\frac{1}{2\pi}\sum_{j=0}^\infty\|\hat{c}_j(\xi)\|_{L_\xi^2(\mathbb{R})}^2\right\}^{1-\theta}\left\{\frac{1}{2\pi}\sum_{j=0}^\infty\big\|(\Lambda_j+\xi^2)^{\frac{k}{\theta}}\hat{c}_j(\xi)\big\|_{L_\xi^2(\mathbb{R})}^2\right\}^\theta.$$
Then, applying the Plancherel theorem and the spectral representation backward, we prove the lemma.
\end{proof} 

\subsection{Gagliardo--Nirenberg inequality}
Throughout this article, our analysis relies heavily on the following modified Gagliardo--Nirenberg inequality. It is a simple modification of the standard inequality; however, it is useful for capturing the $\omega\to\infty$ limit behavior of the anisotropic elliptic operator $\omega H_y-\partial_z^2$. 
\begin{lemma}[Gagliardo--Nirenberg inequality]\label{GN type inequality}
There exists $C_{GN}>2$ such that 
$$\begin{aligned}
\|u\|_{L_x^4(\mathbb{R}^3)}^4&\leq C_{GN}\Big\{\|P_0u\|_{L_x^2(\mathbb{R}^3)}^3\|P_0\partial_z u\|_{L_x^2(\mathbb{R}^3)}+ \|P_1u\|_{L_x^2(\mathbb{R}^3)}\|P_1\partial_z u\|_{L_x^2(\mathbb{R}^3)}\|P_1u\|_{\dot{\Sigma}_y}^2\Big\},
\end{aligned}$$
where $\|\cdot\|_{\dot{\Sigma}_y}$ is the norm defined by \eqref{energy norm}.
\end{lemma}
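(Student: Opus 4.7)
The plan is to split $u = P_0 u + P_1 u$ and treat the two modes separately, using the pointwise inequality $|u|^4 \leq 8(|P_0 u|^4 + |P_1 u|^4)$ (a consequence of $(a+b)^4 \leq 8(a^4+b^4)$), which gives $\|u\|_{L^4_x}^4 \leq 8(\|P_0 u\|_{L^4_x}^4 + \|P_1 u\|_{L^4_x}^4)$. The two resulting terms have very different structure and must be bounded by different Gagliardo--Nirenberg-type arguments, reflecting the strong anisotropy of the operator $\omega(H_y-2)-\partial_z^2$.

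For the low mode, $P_0 u$ factors as $u_\parallel(z)\Phi_0(y)$, so $\|P_0 u\|_{L^4_x}^4 = \|\Phi_0\|_{L^4_y}^4 \|u_\parallel\|_{L^4_z}^4$. The standard 1d Gagliardo--Nirenberg $\|f\|_{L^4_z}^4 \lesssim \|f\|_{L^2_z}^3 \|\partial_z f\|_{L^2_z}$ applied to $u_\parallel$, together with $\|u_\parallel\|_{L^2_z} = \|P_0 u\|_{L^2_x}$ and $\|\partial_z u_\parallel\|_{L^2_z} = \|P_0 \partial_z u\|_{L^2_x}$, immediately produces the first term on the right-hand side.

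For the high mode $v := P_1 u$, the plan is an iterated, anisotropic Gagliardo--Nirenberg. Fix $y$ and apply the 1d $L^4$-GN in $z$; integrating in $y$ gives
$$\|v\|_{L^4_x}^4 \lesssim \int_{\mathbb{R}^2} F(y)^3 G(y)\,dy,\qquad F(y):=\|v(y,\cdot)\|_{L^2_z},\ \ G(y):=\|\partial_z v(y,\cdot)\|_{L^2_z}.$$
Hölder with exponents $(2,2)$ gives $\int F^3 G \leq \|F\|_{L^6_y}^3 \|G\|_{L^2_y}$, where $\|G\|_{L^2_y} = \|\partial_z v\|_{L^2_x}$. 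Applying the 2d Gagliardo--Nirenberg $\|F\|_{L^6_y}^3 \lesssim \|F\|_{L^2_y}\|\nabla_y F\|_{L^2_y}^2$ together with the pointwise bound $|\nabla_y F(y)| \leq \|\nabla_y v(y,\cdot)\|_{L^2_z}$ (from differentiating $F^2$ and using Cauchy--Schwarz), and using $\|F\|_{L^2_y} = \|v\|_{L^2_x}$, we obtain
$$\|v\|_{L^4_x}^4 \lesssim \|v\|_{L^2_x}\,\|\partial_z v\|_{L^2_x}\,\|\nabla_y v\|_{L^2_x}^2.$$
To finish, I would invoke the spectral gap: since $P_1$ projects onto modes with $H_y$-eigenvalue at least $\Lambda_1=4$, one has $H_y - 2 \geq \tfrac{1}{2}H_y$ on $\mathrm{Ran}(P_1)$, so $\|\nabla_y v\|_{L^2_x}^2 \leq \langle H_y v, v\rangle \leq 2\|v\|_{\dot{\Sigma}_y}^2$. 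This converts the $\nabla_y$ factor into $\|P_1 u\|_{\dot{\Sigma}_y}^2$ and completes the second term.

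The main conceptual subtlety is matching the asymmetric structure of the right-hand side with the order in which Gagliardo--Nirenberg is applied: the $\partial_z$-derivative must be extracted first (by a 1d GN on $z$-slices), and only then is $\nabla_y$ extracted from the slice-norm $F(y)$ via 2d GN in $L^6$. Applying them in the opposite order, or attempting to use full 3d GN, would not yield the correct exponents $(1,1,2)$ on $(\|v\|_{L^2_x},\|\partial_z v\|_{L^2_x},\|\nabla_y v\|_{L^2_x})$ needed for compatibility with the anisotropic energy. Routine scaling check ($\lambda_y^2 \lambda_z = \lambda_y\lambda_z^{1/2} \cdot \lambda_y\lambda_z^{-1/2} \cdot \lambda_z$) confirms that these are the unique admissible exponents, which reassures that the order of operations above is forced.
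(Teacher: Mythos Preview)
Your proof is correct and follows essentially the same route as the paper: iterated anisotropic Gagliardo--Nirenberg (1d in $z$ first, then 2d in $y$), followed by the spectral gap $H_y-2\ge \tfrac12 H_y$ on $\mathrm{Ran}(P_1)$. The only cosmetic differences are that the paper proves the single inequality $\|u\|_{L^4_x}^4\lesssim \|u\|_{L^2_x}\|\sqrt{H_y}u\|_{L^2_x}^2\|\partial_z u\|_{L^2_x}$ for general $u$ and then specializes to $P_0u$ and $P_1u$, whereas you split first; and the paper passes from $\big\|\|u\|_{L^2_z}\big\|_{L^6_y}$ to $\|\nabla_y u\|_{L^2_x}$ via Minkowski's integral inequality and then 2d GN on slices, while you apply 2d GN directly to $F(y)=\|v(y,\cdot)\|_{L^2_z}$ using the diamagnetic-type bound $|\nabla_y F|\le \|\nabla_y v(y,\cdot)\|_{L^2_z}$.
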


\begin{proof}
By the 1d Gagliardo--Nirenberg and the H\"older inequalities, we obtain
$$\begin{aligned}
\|u\|_{L_x^4(\mathbb{R}^3)}^4&=\Big\|\|u\|_{L_z^4(\mathbb{R})}\Big\|_{L_y^4(\mathbb{R}^2)}^4\lesssim\Big\|\|u\|_{L_z^2(\mathbb{R})}^{\frac{3}{4}} \|\partial_z u\|_{L_z^2(\mathbb{R})}^{\frac{1}{4}} \Big\|_{L_y^4(\mathbb{R}^2)}^4\\
&\leq\Big\|\|u\|_{L_z^2(\mathbb{R})}^{\frac{3}{4}}\Big\|_{L_y^8(\mathbb{R}^2)}^4 \Big\|\|\partial_z u\|_{L_z^2(\mathbb{R})}^{\frac{1}{4}} \Big\|_{L_y^8(\mathbb{R}^2)}^4=\Big\|\|u\|_{L_z^2(\mathbb{R})}\Big\|_{L_y^6(\mathbb{R}^2)}^3\|\partial_z  u\|_{L_x^2(\mathbb{R}^3)}\\
&\leq \Big\|\|u\|_{L_y^6(\mathbb{R}^2)}\Big\|_{L_z^2(\mathbb{R})}^3\|\partial_z  u\|_{L_x^2(\mathbb{R}^3)}.
\end{aligned}$$
Consequently, we apply the 2d Gagliardo--Nirenberg inequality, 
$$\begin{aligned}
\|u\|_{L_x^4(\mathbb{R}^3)}^4&\lesssim \Big\|\|u\|_{L_y^2(\mathbb{R}^2)}^{\frac{1}{3}}\|\nabla_yu\|_{L_y^2(\mathbb{R}^2)}^{\frac{2}{3}}\Big\|_{L_z^2(\mathbb{R})}^3\|\partial_z  u\|_{L_x^2(\mathbb{R}^3)}\\
&\leq \|u\|_{L_x^2(\mathbb{R}^3)}\|\nabla_y u\|_{L_x^2(\mathbb{R}^3)}^2\|\partial_z  u\|_{L_x^2(\mathbb{R}^3)}\\
&\leq \|u\|_{L_x^2(\mathbb{R}^3)}\|\sqrt{H_y}u\|_{L_x^2(\mathbb{R}^3)}^2\|\partial_z  u\|_{L_x^2(\mathbb{R}^3)}.
\end{aligned}$$
In the above bound, by inserting $P_0u$ and $P_1u$ with 
$$
\sqrt{H_y}P_0u=\sqrt{2}P_0u \ \textup{ and } \|\sqrt{H_y} P_1 u\|_{L_x^2(\mathbb{R}^3)}\lesssim \|P_1u\|_{\dot{\Sigma}_y},
$$ respectively, and then combining them, we complete the proof.
\end{proof}

As an application, we find a forbidden region in the weighted energy space $\Sigma$ (see \eqref{energy space}).
 
\begin{corollary}[Forbidden region]\label{cor: forbidden region}
Suppose that $\omega \ge (C_{GN})^4m^2$, where $C_{GN}$ is a constant in Lemma \ref{GN type inequality}. Then, there is no $u\in\Sigma$ such that $M(u)=m$, $E_\omega(u)<0$ and
$$\frac{(C_{GN})^2m^3}{2\omega} \leq\|u\|_{\dot{\Sigma}_y}^2\leq \sqrt{\omega}.$$
As a consequence, if $M(u)=m$, $E_\omega(u)<0$, and $\|u\|_{\dot{\Sigma}_y}^2\leq \sqrt{\omega}$, then $\|u\|_{\dot{\Sigma}_y}^2<\frac{(C_{GN})^2m^3}{2\omega}$.
\end{corollary}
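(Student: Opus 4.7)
The plan is to derive the first assertion directly from Lemma \ref{GN type inequality} combined with a single application of Young's inequality, and then obtain the second assertion as its contrapositive. Throughout, set $s:=\|u\|_{\dot{\Sigma}_y}^2$. The key preliminary observation is that since $\Phi_0$ is an eigenfunction of $H_y$ with eigenvalue $2$, the projection $P_0 u=u_\parallel(z)\Phi_0(y)$ satisfies $\|P_0 u\|_{\dot{\Sigma}_y}=0$, so all of the $\dot{\Sigma}_y$-mass sits in $P_1 u$: $\|P_1 u\|_{\dot{\Sigma}_y}^2=s$. Together with $\|P_0 u\|_{L_x^2(\mathbb{R}^3)},\|P_1 u\|_{L_x^2(\mathbb{R}^3)}\le \sqrt{m}$, Lemma \ref{GN type inequality} then gives
\begin{equation*}
\|u\|_{L_x^4(\mathbb{R}^3)}^4 \le C_{GN}\bigl(m^{3/2}\|P_0\partial_z u\|_{L_x^2(\mathbb{R}^3)} + \sqrt{m}\, s\,\|P_1\partial_z u\|_{L_x^2(\mathbb{R}^3)}\bigr).
\end{equation*}

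Applying Young's inequality $2ab\le a^2+b^2$ to each term on the right (with $a_1=\tfrac12 C_{GN}m^{3/2}$ and $a_2=\tfrac12 C_{GN}\sqrt{m}\,s$ respectively) and using $\|P_0\partial_z u\|_{L_x^2(\mathbb{R}^3)}^2+\|P_1\partial_z u\|_{L_x^2(\mathbb{R}^3)}^2=\|\partial_z u\|_{L_x^2(\mathbb{R}^3)}^2$ yields
\begin{equation*}
\|u\|_{L_x^4(\mathbb{R}^3)}^4 \le \frac{(C_{GN})^2 m^3}{4} + \frac{(C_{GN})^2 m s^2}{4} + \|\partial_z u\|_{L_x^2(\mathbb{R}^3)}^2.
\end{equation*}
Rewriting $E_\omega(u)<0$ as $2\omega s + 2\|\partial_z u\|_{L_x^2(\mathbb{R}^3)}^2 < \|u\|_{L_x^4(\mathbb{R}^3)}^4$ and substituting the above bound, the $\|\partial_z u\|^2$ term on the right is absorbed into the one on the left, leaving
\begin{equation*}
2\omega s < \tfrac14 (C_{GN})^2 m^3 + \tfrac14 (C_{GN})^2 m s^2.
\end{equation*}

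Here the two hypotheses play complementary roles: the constraint $s\le \sqrt{\omega}$ supplies one factor of $\sqrt{\omega}$, while the parameter assumption $\omega\ge (C_{GN})^4 m^2$, i.e.\ $(C_{GN})^2 m\le \sqrt{\omega}$, supplies another, so that $(C_{GN})^2 m\, s^2 \le \sqrt{\omega}\cdot s\cdot \sqrt{\omega}=\omega s$. Substituting and rearranging turns the inequality into $\tfrac{7}{4}\omega s < \tfrac14 (C_{GN})^2 m^3$, i.e. $s < \frac{(C_{GN})^2 m^3}{7\omega}$, which flatly contradicts the hypothesis $s\ge \frac{(C_{GN})^2 m^3}{2\omega}$. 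The second statement follows from the first by contraposition.

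There is no substantive obstacle here; the only delicate point is to balance the two right-hand terms produced by Young's inequality so that the quadratic $(C_{GN})^2 m s^2$ can be dominated by the linear $\omega s$, and this is exactly what the combination of the constraint $\|u\|_{\dot{\Sigma}_y}^2\le\sqrt{\omega}$ with the smallness-of-mass condition $\omega\ge (C_{GN})^4 m^2$ is designed to achieve.
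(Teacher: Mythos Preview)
Your proof is correct and follows essentially the same route as the paper: apply the modified Gagliardo--Nirenberg inequality, use Young's inequality to trade the $\|\partial_z u\|$ factors for squares, and then combine the constraint $s\le\sqrt{\omega}$ with $(C_{GN})^2 m\le\sqrt{\omega}$ to force a contradiction with the lower bound on $s$. The only cosmetic difference is that you keep the $P_0$ and $P_1$ contributions separate when applying Young's inequality (using $\|P_0\partial_z u\|^2+\|P_1\partial_z u\|^2=\|\partial_z u\|^2$), whereas the paper first bounds both by $\|\partial_z u\|$ and then applies $ab\le\tfrac{a^2}{8}+2b^2$; this yields slightly different intermediate constants but the same conclusion.
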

\begin{proof}
For contradiction, we assume that there is a nonzero $u\in\Sigma$. Then, by Lemma \ref{GN type inequality} and the Cauchy-Schwarz inequality $ab\le \frac{a^2}{8}+2b^2$ with $a=\|\partial_z u\|_{L_x^2(\mathbb{R}^3)}$, we have
\begin{equation}\label{proof: forbidden region}
\begin{aligned}
0>E_\omega(u)&\geq\frac{\omega}{2}\|u\|_{\dot{\Sigma}_y}^2+\frac{1}{2}\|\partial_z u\|_{L_x^2(\mathbb{R}^3)}^2-\frac{C_{GN}m^{\frac{3}{2}}}{4}\|\partial_z u\|_{L_x^2(\mathbb{R}^3)}\\
&\quad-\frac{C_{GN}\sqrt{m}}{4}\|\partial_z u\|_{L_x^2(\mathbb{R}^3)}\|u\|_{\dot{\Sigma}_y}^2\\
&\geq\frac{\omega}{2}\|u\|_{\dot{\Sigma}_y}^2+\frac{1}{4}\|\partial_z u\|_{L_x^2(\mathbb{R}^3)}^2-\frac{(C_{GN})^2m^3}{8}-\frac{(C_{GN})^2m}{8}\|u\|_{\dot{\Sigma}_y}^4.
\end{aligned}
\end{equation}
By the assumption on $\|u\|_{\dot{\Sigma}_y}^2$, it follows that 
\begin{equation*}\label{proof: forbidden region1}
\begin{aligned}
0&>\frac{\omega}{2}\|u\|_{\dot{\Sigma}_y}^2+\frac{1}{4}\|\partial_z u\|_{L_x^2(\mathbb{R}^3)}^2-\frac{\omega}{4}\|u\|_{\dot{\Sigma}_y}^2-\frac{(C_{GN})^2m}{8}\sqrt{\omega}\|u\|_{\dot{\Sigma}_y}^2\\
&=\frac{1}{4}\|\partial_z u\|_{L_x^2(\mathbb{R}^3)}^2+\frac{\omega}{4}\|u\|_{\dot{\Sigma}_y}^2\bigg\{1-\frac{(C_{GN})^2m}{2\sqrt{\omega}}\bigg\}.
\end{aligned}
\end{equation*}
However, because $\omega \ge  {(C_{GN})^4m^2}$, we can deduce a contradiction.
\end{proof}

\begin{remark}
Corollary \ref{cor: forbidden region} states that if $u$ has negative energy and satisfies the constraint in the variational problem \eqref{minimization}, then Lemma \ref{GN type inequality} can be read as 
$$\|u\|_{L_x^4(\mathbb{R}^3)}^4\lesssim \|P_0u\|_{L_x^2(\mathbb{R}^3)}^3\|P_0\partial_z u\|_{L_x^2(\mathbb{R}^3)}+\frac{1}{\omega}\|P_1u\|_{L_x^2(\mathbb{R}^3)}\|P_1\partial_z u\|_{L_x^2(\mathbb{R}^3)}\lesssim \|\partial_z u\|_{L_x^2(\mathbb{R}^3)}.$$
Thus, the modified Gagliardo--Nirenberg inequality behaves like the standard 1d inequality
$$\|u\|_{L_x^4(\mathbb{R})}^4\lesssim   \|u\|_{L_x^2(\mathbb{R})}^3\|\partial_z u\|_{L_x^2(\mathbb{R})}.$$
In this sense, the modified inequality is a suitable tool for solving a supercritical problem \eqref{minimization} with a subcritical nature in the limit.
\end{remark}

\subsection{Conditional global existence}
We consider the Cauchy problem for 3d NLS \eqref{NLS}. Using Mehler's formula \cite{Mehler}, the linear Schr\"odinger flow with a partial harmonic potential satisfies the Strichartz estimates \cite{KT}. Thus, a standard fixed-point argument yields the local well-posedness of the NLS \eqref{NLS} in the weighted energy space $\Sigma$ \cite{Carles, Caz}. We note that its solution exists as long as the quantity $\|\partial_z u(t)\|_{L^2(\mathbb{R}^3)}+\|u(t)\|_{\dot{\Sigma}_y}$ remains bounded; however, a blow-up may occur in finite time.

We show that negative energy solutions whose initial data satisfy the constraint exist globally in time, and they obey a refined constraint.

\begin{proposition}[Global existence for trapped solutions]\label{prop: global existence}
Let $\omega \ge (C_{GN})^4m^2$, where $C_{GN}$ is a constant in Lemma \ref{GN type inequality}. Suppose that $u_{\omega,0}\in \Sigma$, $M(u_{\omega,0})=m$.
$$E_\omega(u_{\omega,0})<0\textup{ and }\|u_{\omega,0}\|_{\dot{\Sigma}_y}^2\leq \sqrt{\omega}.$$
Then, the solution $u_\omega(t)$ to the 3d NLS \eqref{NLS} with initial data $u_{\omega,0}$ exists globally in time, and 
\begin{equation}\label{eq: global existence}
\sup_{t\in\mathbb{R}}\|u_{\omega}(t)\|_{\dot{\Sigma}_y}^2\leq\frac{(C_{GN})^2m^3}{2\omega}.
\end{equation}
\end{proposition}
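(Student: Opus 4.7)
\medskip
\noindent\textbf{Proof proposal.} The plan is to run a standard continuity (bootstrap) argument, where the ``trapping'' is supplied by the gap between $\frac{(C_{GN})^2 m^3}{2\omega}$ and $\sqrt{\omega}$ established in Corollary \ref{cor: forbidden region}. By the local well-posedness in $\Sigma$ and the blow-up alternative mentioned in the paragraph preceding the proposition, it suffices to show that on the maximal interval of existence $I\ni 0$, both $\|u_\omega(t)\|_{\dot{\Sigma}_y}$ and $\|\partial_z u_\omega(t)\|_{L^2_x(\R^3)}$ remain bounded by quantities depending only on $m$, $\omega$, and $E_\omega(u_{\omega,0})$. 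Along $I$, mass and energy are conserved, so $M(u_\omega(t))=m$ and $E_\omega(u_\omega(t))=E_\omega(u_{\omega,0})<0$ throughout.

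First I would control $\|u_\omega(t)\|_{\dot{\Sigma}_y}$. Note that the hypothesis $\omega\geq (C_{GN})^4 m^2$ combined with $C_{GN}>2$ gives $\frac{(C_{GN})^2 m^3}{2\omega}<\sqrt{\omega}$ (with a strict gap), so the assumption $\|u_{\omega,0}\|_{\dot{\Sigma}_y}^2\leq \sqrt{\omega}$ places the initial datum outside the forbidden region. Define
\[
T^* := \sup\bigl\{ T\in I\cap[0,\infty) : \|u_\omega(t)\|_{\dot{\Sigma}_y}^2 \leq \sqrt{\omega}\text{ for all }t\in[0,T]\bigr\}.
\]
For $t\in[0,T^*)$ the hypotheses of Corollary \ref{cor: forbidden region} hold, so $\|u_\omega(t)\|_{\dot{\Sigma}_y}^2<\frac{(C_{GN})^2 m^3}{2\omega}$. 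If $T^*$ were finite in $I$, continuity of $t\mapsto \|u_\omega(t)\|_{\dot{\Sigma}_y}^2$ would give $\|u_\omega(T^*)\|_{\dot{\Sigma}_y}^2=\sqrt{\omega}$, contradicting the strict upper bound $\frac{(C_{GN})^2 m^3}{2\omega}<\sqrt{\omega}$ obtained from the corollary. Hence the bound \eqref{eq: global existence} holds for all forward times in $I$, and the same argument runs backward in time.

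Next I would control $\|\partial_z u_\omega(t)\|_{L^2_x(\R^3)}$. The estimate \eqref{proof: forbidden region} derived inside the proof of Corollary \ref{cor: forbidden region} actually bounds $\frac14\|\partial_z u\|_{L^2_x}^2$ from above by $E_\omega(u)+\frac{(C_{GN})^2 m^3}{8}+\frac{(C_{GN})^2 m}{8}\|u\|_{\dot{\Sigma}_y}^4$ after dropping the nonnegative term $\frac{\omega}{2}\|u\|_{\dot{\Sigma}_y}^2$. Plugging in $E_\omega(u_\omega(t))<0$ and the already-proved bound $\|u_\omega(t)\|_{\dot{\Sigma}_y}^2\leq \frac{(C_{GN})^2 m^3}{2\omega}$, the right-hand side is uniformly bounded in $t\in I$. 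Combined with the $\dot{\Sigma}_y$-bound, this uniform control on $\|\partial_z u_\omega(t)\|_{L^2_x}+\|u_\omega(t)\|_{\dot{\Sigma}_y}$ precludes blow-up and forces $I=\R$.

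The only subtlety, and the step I would be most careful about, is the continuity argument in the second paragraph: one must check that $t\mapsto \|u_\omega(t)\|_{\dot{\Sigma}_y}^2$ is genuinely continuous on $I$ (so that the endpoint value at $T^*$ makes sense), which follows from the $\Sigma$-valued continuity of the local solution. Once that is in place, the forbidden-region gap does all the work—there is no delicate smallness needed, as the separation between $\frac{(C_{GN})^2 m^3}{2\omega}$ and $\sqrt{\omega}$ is guaranteed by the standing assumption $\omega\geq (C_{GN})^4 m^2$.
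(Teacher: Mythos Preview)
Your proposal is correct and follows essentially the same route as the paper: invoke Corollary \ref{cor: forbidden region} together with continuity of $t\mapsto\|u_\omega(t)\|_{\dot{\Sigma}_y}^2$ to trap the solution below $\frac{(C_{GN})^2 m^3}{2\omega}$, then use the inequality \eqref{proof: forbidden region} to uniformly bound $\|\partial_z u_\omega(t)\|_{L^2_x}$ and preclude blow-up. The paper's version is simply terser (it skips the explicit definition of $T^*$ and the endpoint contradiction), but the logic is identical.
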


\begin{proof}
By the time reversal symmetry, it is sufficient to consider only positive times. Let $u_\omega(t)$ be the solution to the 3d NLS \eqref{NLS} with initial data $u_{\omega,0}$ on the maximal interval of existence $[0,T)$. Then, it follows from Corollary \ref{cor: forbidden region} and the continuity of the nonlinear solution that $\|u_{\omega}(t)\|_{\dot{\Sigma}_y}^2\leq \frac{(C_{GN})^2m^3}{2\omega}$ and by \eqref{proof: forbidden region}, 
$$\|\partial_z u_{\omega}(t)\|_{L_x^2(\mathbb{R}^3)}^2 \leq\frac{(C_{GN})^2m^3}{2}+\frac{(C_{GN})^2m}{2}\|u_{\omega}(t)\|_{\dot{\Sigma}_y}^4\leq\frac{(C_{GN})^2m^3}{2}+\frac{(C_{GN})^6m^7}{8\omega^2}$$
for all $0\leq t< T$. Because $u_{\omega}(t)$ remains bounded, we must have $T=\infty$.
\end{proof} 
Next, we prove that a finite-time blow-up may occur if the additional constraint is not satisfied.
\begin{lemma}\label{blow up lemma}
Let  $\omega\ge  (C_{GN})^4m^2$. If there exists $u_0\in \Sigma$ such that 
\begin{equation}\label{assump on blow up}
\|x u_0\|_{L^2_x(\R^3)}<\infty, \ \  \|u_0\|_{\dot{\Sigma}_y}^2\ge \sqrt{\omega}, \ \  M(u_0)=m \ \  \textup{ and }\ \  E_\omega(u_0)<0,
\end{equation}
 then the solution to the Cauchy problem \eqref{NLS} with initial data $u_0$ blows up in finite time.
\end{lemma}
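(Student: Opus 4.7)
The plan is a Glassey-type concavity argument for the weighted variance
$V(t):=\tfrac{1}{\omega}\|y u(t)\|_{L^2_x(\R^3)}^2+\|z u(t)\|_{L^2_x(\R^3)}^2,$
which is, up to normalization, the second moment $\|xu\|_{L^2_x}^2$ expressed in the \emph{original} (pre-rescaling) variables of \eqref{NLS0}. The weight $1/\omega$ in front of $\|yu\|^2$ is chosen so that the partial harmonic operator $\omega(H_y-2)$ and the plain Laplacian $-\partial_z^2$ contribute to $V''$ at the same scale; the fast $O(\omega)$ oscillations carried by $\|yu\|^2$ alone would otherwise obstruct any clean concavity bound.

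I would first argue that the flow remains on the ``non-trapped'' side of the forbidden region. By time-reversal symmetry it suffices to work forward, on the maximal interval $[0,T_{\max})$; mass and energy are conserved, so $M(u(t))=m$ and $E_\omega(u(t))=E_\omega(u_0)<0$. The hypothesis $\omega\ge (C_{GN})^4 m^2$ together with $C_{GN}>2$ make the forbidden interval $[\tfrac{(C_{GN})^2 m^3}{2\omega},\sqrt{\omega}]$ of Corollary \ref{cor: forbidden region} non-degenerate, so continuity of $t\mapsto\|u(t)\|_{\dot\Sigma_y}^2$ together with the initial bound $\|u_0\|_{\dot\Sigma_y}^2\ge\sqrt{\omega}$ forces $\|u(t)\|_{\dot\Sigma_y}^2>\sqrt{\omega}$ for all $t\in[0,T_{\max})$.

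Next, $xu_0\in L^2_x$ combined with standard propagation of weighted moments for NLS in $\Sigma$ (Mehler/Strichartz; see \cite{Carles, Caz}) yields $V\in C^2([0,T_{\max}))$. Differentiating twice and integrating by parts — in $z$ for the $V_z$-piece, and in $y$ via the commutator identity $[\omega(H_y-2),y\cdot\nabla_y]=2\omega(H_y-2)+4\omega-4\omega|y|^2$ for the $V_y$-piece — and then using the energy identity $16E_\omega(u)=8\omega\|u\|_{\dot\Sigma_y}^2+8\|\partial_z u\|_{L^2_x}^2-4\|u\|_{L^4_x}^4$ to eliminate the kinetic terms, I expect the key virial identity
$V''(t)=16\,E_\omega(u_0)+16\omega m-16\omega\|y u(t)\|_{L^2_x}^2-2\|u(t)\|_{L^4_x}^4.$

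To close, use $\|yu\|^2\ge 0$ together with $\|u\|_{L^4_x}^4\ge 2\omega\|u\|_{\dot\Sigma_y}^2+2\|\partial_z u\|_{L^2_x}^2\ge 2\omega^{3/2}$ (the first inequality from $E_\omega<0$, the second from Step 1) to deduce
$V''(t)\le 16\,E_\omega(u_0)+16\omega m-4\omega^{3/2}.$
Since $\omega\ge(C_{GN})^4 m^2$ and $C_{GN}>2$ give $\omega>16m^2$, hence $\sqrt{\omega}>4m$ and $16\omega m<4\omega^{3/2}$, one has $V''(t)<16\,E_\omega(u_0)<0$ uniformly on $[0,T_{\max})$. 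Strict concavity with a uniformly negative $V''$ forces $V(t)\to-\infty$ in finite time, contradicting $V\ge 0$ whenever $T_{\max}=\infty$; therefore $T_{\max}<\infty$, which is the claimed finite-time blow-up. The main technical point left open is the rigorous justification of the two-weight virial identity for $\Sigma$-solutions with finite variance, handled by the standard truncation of the weight $\tfrac{|y|^2}{\omega}+z^2$ and regularization of the initial data.
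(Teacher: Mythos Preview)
Your proof is correct and follows essentially the same Glassey-type argument as the paper. The paper first rescales to $v(t,y,z)=\sqrt{\omega}\,u(t,\sqrt{\omega}y,z)$ and computes $\partial_t^2\int|x|^2|v|^2\,dx$ via \cite[Proposition 6.5.1]{Caz}; your weighted variance $V(t)=\tfrac{1}{\omega}\|yu\|_{L^2}^2+\|zu\|_{L^2}^2$ is precisely $\int|x|^2|v|^2\,dx$ expressed back in the $u$-variables, so the two virial identities are the same computation. The paper then rewrites the result as $24E_\omega(u)-4\omega\|u\|_{\dot\Sigma_y}^2-4\|\partial_z u\|_{L^2}^2+16\omega m$ and applies $\|u\|_{\dot\Sigma_y}^2\ge\sqrt{\omega}$ directly, whereas you rewrite it as $16E_\omega(u_0)+16\omega m-16\omega\|yu\|_{L^2}^2-2\|u\|_{L^4}^4$ and bound $\|u\|_{L^4}^4$ from below via $E_\omega<0$; both routes reduce to the same inequality $16\omega m<4\omega^{3/2}$, which follows from $\omega\ge(C_{GN})^4m^2$ and $C_{GN}>2$. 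Your invariance-of-region step (via Corollary~\ref{cor: forbidden region}) is likewise the same as the paper's, which phrases it through Proposition~\ref{prop: global existence}.
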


\begin{proof}
We follow the classical argument in Glassey \cite{Glassey}. Let $u(t)$ be the solution to the 3d NLS \eqref{NLS} with initial data $u_0$ on the maximal interval of existence $(-T_{\min},T_{\max})$. Note that 
\begin{equation}\label{const outside}
\|u(t)\|_{\dot{\Sigma}_y}^2\ge \sqrt{\omega} \ \  \textup{ for } \ \ t\in (-T_{\min},T_{\max}),
\end{equation}
because Proposition \ref{prop: global existence} implies that if $\|u(t_0)\|_{\dot{\Sigma}_y}^2< \sqrt{\omega}$ for some $t_0\in (-T_{\min},T_{\max})$, then $\|u(t)\|_{\dot{\Sigma}_y}^2  \leq\frac{(C_{GN})^2m^3}{2\omega}<\sqrt{\omega}$ for all $t\in\mathbb{R}$ including $t=0$.

On the other hand, by the scaling $v(t,y,z)=\sqrt{\omega}u(t,\sqrt{\omega}y,z)$, we observe that the problem \eqref{NLS} is equivalent to 
$$i\partial_t v=(-\Delta_x+\omega^2|y|^2 -2\omega)v-\frac{1}{\omega}|v|^2v.$$
Then, direct computations (see \cite[Proposition 6.5.1]{Caz}) yield
$$\begin{aligned}
\partial_t^2 \int_{\R^3} |x|^2| v(t,x)|^2 dx&=8 \|\nabla v\|_{L^2(\R^3)}^2-6\omega^{-1}\|v\|_{L^4(\R^3)}^4-8\omega^2\int_{\R^3} |y|^2 |v|^2dx\\
&=8\omega\|u\|_{\dot{\Sigma}_y}^2+16 \omega\|u\|_{L^2_x(\R^3)}^2+8\|\partial_z u\|_{L_x^2(\mathbb{R}^3)}^2 -6 \|u\|_{L^4_x(\R^3)}^4\\
&=24 E_\omega(u)-4\omega\|u\|_{\dot{\Sigma}_y}^2-4\|\partial_z u\|_{L^2_x(\R^3)}^2 +16\omega\|u\|_{L^2_x(\R^3)}^2.
\end{aligned}$$
Hence, by \eqref{const outside}, the assumptions \eqref{assump on blow up} and the fact that $C_{GN}>2$ (see Lemma \ref{GN type inequality}), if $\omega\ge  (C_{GN})^4m^2$, we obtain
$$\partial_t^2 \int_{\R^3} |x|^2| v(t,x)|^2 dx\le 24 E_\omega(u_0)<0.$$
Therefore, we conclude that both $T_{\min}$ and $T_{\max}$ are finite.
\end{proof}

\section{Existence of a minimizer: proof of Theorem \ref{th1}}\label{minexis}

We consider the minimization problem $\mathcal{J}_\omega(m)$ (see \eqref{minimization}). This section proves the existence of a minimizer.   Indeed, as mentioned in Remark \ref{remark: existence remark} (1), existence has been established in a slightly different setting \cite[Theorem 1]{BBJV}; however, it is reformulated in the context of dimension reduction as $\omega\to\infty$.  Thus, we only sketch the proof for the sake of completeness. We also provide additional properties of a minimizer, which are direct consequences of this different formulation (Lemmas \ref{hig} and \ref{shp}). 

\subsection{Existence of a minimizer}

First, we obtain the upper and lower bounds of the minimum energy level of the variational problem \eqref{minimization}.

\begin{lemma}\label{upp}
For any $\omega> 0$, we have 
$$-\infty<\mathcal{J}_\omega(m)\leq \mathcal{J}_\infty(m)=E_\infty(Q_\infty)<0.$$
\end{lemma}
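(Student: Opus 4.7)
The plan is to handle the three assertions separately, since the middle equality $\mathcal{J}_\infty(m) = E_\infty(Q_\infty)$ is precisely the defining property of the 1d ground state recalled just before \eqref{1d minimization}.

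For the upper bound $\mathcal{J}_\omega(m) \le \mathcal{J}_\infty(m)$, I would use the factorized trial function $u(x) = Q_\infty(z)\Phi_0(y)$. This choice is tailored to the adjusted energy (see Remark \ref{rmk: modified energy}): because $\Phi_0$ is the $L_y^2$-normalized eigenfunction of $H_y$ corresponding to the lowest eigenvalue $2$, we have $(H_y - 2)\Phi_0 = 0$, hence $\|u\|_{\dot{\Sigma}_y}^2 = 0$. Both constraints $M(u) = \|Q_\infty\|_{L_z^2(\R)}^2\|\Phi_0\|_{L_y^2(\R^2)}^2 = m$ and $\|u\|_{\dot{\Sigma}_y}^2 \le \sqrt{\omega}$ then hold automatically, and the $\omega$-dependent term in $E_\omega(u)$ vanishes. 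A direct Gaussian computation yields $\|\Phi_0\|_{L_y^4(\R^2)}^4 = \tfrac{1}{2\pi}$, and separation of variables gives
$$E_\omega(u) = \tfrac{1}{2}\|\partial_z Q_\infty\|_{L_z^2(\R)}^2 - \tfrac{1}{8\pi}\|Q_\infty\|_{L_z^4(\R)}^4 = E_\infty(Q_\infty),$$
as desired.

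For the lower bound $-\infty < \mathcal{J}_\omega(m)$, I would apply Lemma \ref{GN type inequality} to any admissible $u$. Using $\|P_j u\|_{L_x^2(\R^3)} \le \sqrt{m}$ and $\|P_j \partial_z u\|_{L_x^2(\R^3)} \le \|\partial_z u\|_{L_x^2(\R^3)}$ for $j=0,1$, together with $\|P_1 u\|_{\dot{\Sigma}_y}^2 \le \|u\|_{\dot{\Sigma}_y}^2 \le \sqrt{\omega}$, the nonlinear term is dominated by $C_{GN}\bigl(m^{3/2} + \sqrt{m\omega}\bigr)\|\partial_z u\|_{L_x^2(\R^3)}$. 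Young's inequality then absorbs this linear factor into $\tfrac{1}{2}\|\partial_z u\|_{L_x^2(\R^3)}^2$ at the cost of a finite constant depending only on $m$ and $\omega$, leaving $E_\omega(u) \ge -C(m,\omega)$.

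Finally, $E_\infty(Q_\infty) < 0$ follows from the mass-subcriticality of the 1d cubic NLS. The mass-preserving rescaling $Q_\lambda(z) := \sqrt{\lambda}\,Q_\infty(\lambda z)$ satisfies
$$E_\infty(Q_\lambda) = \tfrac{\lambda^2}{2}\|\partial_z Q_\infty\|_{L_z^2(\R)}^2 - \tfrac{\lambda}{8\pi}\|Q_\infty\|_{L_z^4(\R)}^4,$$
which is strictly negative for sufficiently small $\lambda > 0$, giving $E_\infty(Q_\infty) = \mathcal{J}_\infty(m) \le E_\infty(Q_\lambda) < 0$. No step presents a real obstacle; the only conceptual point worth emphasizing is that the subtraction of the ground-state eigenvalue $2$ built into the norm $\|\cdot\|_{\dot{\Sigma}_y}$ is precisely what allows $Q_\infty(z)\Phi_0(y)$ to realize the 1d ground state energy while paying nothing for the partial confinement, which forces $\mathcal{J}_\omega(m)$ to agree with $\mathcal{J}_\infty(m)$ in the strong-trap limit.
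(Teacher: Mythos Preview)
Your proof is correct and follows essentially the same route as the paper: the same factorized trial state $Q_\infty(z)\Phi_0(y)$ for the upper bound (with the key observation that $\|u\|_{\dot{\Sigma}_y}=0$ and $\|\Phi_0\|_{L_y^4}^4=\tfrac{1}{2\pi}$), and the same use of Lemma~\ref{GN type inequality} together with the constraint $\|u\|_{\dot{\Sigma}_y}^2\le\sqrt{\omega}$ to bound $\|u\|_{L_x^4}^4$ linearly in $\|\partial_z u\|_{L_x^2}$ for the lower bound. The only addition is your explicit scaling argument for $\mathcal{J}_\infty(m)<0$, which the paper simply takes as a known property of the mass-subcritical 1d problem.
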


\begin{proof}
Direct calculations show that $ \|\Phi_0(y)Q_\infty(z)\|_{\dot{\Sigma}_y}=0$
and
$$E_\omega(\Phi_0(y)Q_\infty(z))=\frac{1}{2}\|\partial_z Q_\infty\|_{L_z^2(\mathbb{R})}^2-\frac{1}{4}\|Q_\infty\|_{L_z^4(\mathbb{R})}^4\|\Phi_0\|_{L^4(\mathbb{R}^2)}^4=E_\infty(Q_\infty)=\mathcal{J}_\infty(m)<0,$$
since
\begin{equation}\label{L^4 norm of Phi_0}
\int_{\mathbb{R}^2}|\Phi_0(y)|^4dy=\int_{\mathbb{R}^2}\frac{1}{\pi^2}e^{-2|y|^2}dy=\frac{1}{2\pi}.
\end{equation}
Thus, by minimality, it follows that $\mathcal{J}_\omega(m)\leq\mathcal{J}_\infty(m)$.

To show that $\mathcal{J}_\omega(m)$ is bounded from below, we observe that by Lemma \ref{GN type inequality} and the mass constraint, we have 
$$E_\omega(u)\geq \frac{\omega}{2}\|u\|_{\dot{\Sigma}_y}^2+\frac{1}{2}\|\partial_z u\|_{L_x^2(\mathbb{R}^3)}^2 -\frac{C_{GN}}{4}\Big\{m^{\frac{3}{2}}\|\partial_z u\|_{L_x^2(\mathbb{R}^3)}+ m^\frac12\|\partial_z u\|_{L_x^2(\mathbb{R}^3)}\sqrt{\omega} \Big\}.$$
Thus, the Cauchy-Schwarz inequality yields a lower bound on the energy $E_\omega(u)$, which is independent of $u$.
\end{proof}

Now, we prove the existence of a minimizer.

\begin{proof}[Sketch of the proof of Theorem \ref{th1}]
Let $\{u_n\}_{n=1}^\infty$ be a minimizing sequence for $\mathcal{J}_\omega(m)$. By Lemma \ref{upp}, estimate \eqref{proof: forbidden region}, and the assumption $\omega \ge {(C_{GN})^4m^2}$, we have
\begin{equation}\label{lba} 
\begin{aligned}
0&>\mathcal{J}_\omega(m)+o_n(1)=E_\omega(u_n)\\
& \geq  \frac{\omega}{2}\|u_n\|_{\dot{\Sigma}_y}^2\left(1-\frac{(C_{GN})^2m}{4\omega}\|u_n\|_{\dot{\Sigma}_y}^2\right)+\frac{1}{4}\|\partial_z u_n\|_{L_x^2(\mathbb{R}^3)}^2-\frac{(C_{GN})^2m^3}{8}\\
&\geq   \frac{\omega}{4}\|u_n\|_{\dot{\Sigma}_y}^2 +\frac{1}{4}\|\partial_z u_n\|_{L_x^2(\mathbb{R}^3)}^2-\frac{(C_{GN})^2m^3}{8}.
\end{aligned}
\end{equation}
Consequently, $\{u_n\}_{n=1}^\infty$ is bounded in $\Sigma$. Note that $\|u_n\|_{L_x^4(\mathbb{R}^3)}^4\geq -4\mathcal{J}_\infty(m)+o_n(1)\geq -2\mathcal{J}_\infty(m)>0$ because Lemma \ref{upp} implies that 
\begin{equation}\label{nont}
0>\mathcal{J}_\infty(m)\geq\limsup_{n\rightarrow \infty}E_\omega(u_n)\ge  -\frac{1}{4}\liminf_{n\rightarrow \infty}\|u_n\|_{L_x^4(\mathbb{R}^3)}^4.
\end{equation}
Hence, passing to a subsequence, there exists a sequence $\{z_n\}_{n=1}^\infty\subset \R$ such that $u_n(y,z-z_n)\rightharpoonup u_\infty$ in $\Sigma$. Thus, it follows that $u_\infty\neq 0$ (refer to the proof of \cite[Lemma 3.4]{BBJV} for details).

 With abuse of notation, we denote the function $u_n(y,z-z_n)$ by $u_n$. Then, we claim that $u_n\to u_\infty$ in $L_x^2(\mathbb{R}^3)$. Indeed, if the claim is not true, then passing to a subsequence,
$$\|u_\infty\|_{L_x^2(\mathbb{R}^3)}^2=m',\quad\lim_{n\to\infty}\|u_n-u_\infty\|_{L_x^2(\mathbb{R}^3)}^2=m-m'\in (0,m).$$
Note that since $u_n\rightharpoonup u_\infty$ in $\Sigma$, we have
$$\begin{aligned}
&\omega\|u_n\|_{\dot{\Sigma}_y}^2+\|\partial_z u_n\|_{L^2_x(\R^3)}^2\\
&=\omega\Big\{\|u_\infty\|_{\dot{\Sigma}_y}^2+\|u_n-u_\infty\|_{\dot{\Sigma}_y}^2\Big\} +\|\partial_z u_\infty\|_{L^2_x(\R^3)}^2+\|\partial_z (u_n-u_\infty)\|_{L^2_x(\R^3)}^2+o_n(1)
\end{aligned}$$
and
$$\|u_n\|_{L_x^4(\R^3)}^4=\|u_\infty\|_{L_x^4(\R^3)}^4+ \|u_n-u_\infty\|_{L_x^4(\R^3)}^4+o_n(1).$$
Hence, it follows that 
\begin{equation}\label{e1}
\begin{aligned}
\mathcal{J}_\omega(m)&=E_\omega(u_n)+o_n(1)=E_\omega(u_\infty)+E_\omega(u_n-u_\infty)+o_n(1)\\
&\geq \mathcal{J}_\omega(m')+\mathcal{J}_\omega(m-m')+o_n(1).
\end{aligned}
\end{equation}
Let $\{v_n\}_{n=1}^\infty$ be a minimizing sequence for $\mathcal{J}_\omega(m')$. Then, the modified sequence $\{\sqrt{\frac{m}{m'}}v_n\}_{n=1}^\infty$ satisfies $\|\sqrt{\frac{m}{m'}}v_n\|_{L_x^2(\mathbb{R}^3)}^2=m$ and
$$\left\| \sqrt{\tfrac{m}{m'}}v_n \right\|_{\dot{\Sigma}_y}^2\leq \frac{m}{m'}\|v_n\|_{\dot{\Sigma}_y}^2\leq  \frac{m}{m'}\frac{(C_{GN})^2(m^\prime)^3}{2\omega}\leq \sqrt{\omega},$$
where we used Corollary \ref{cor: forbidden region} and the assumption $\omega \ge  (C_{GN})^4m^2 \ge  (C_{GN})^\frac43m^2$. Moreover, by repeating the proof of \eqref{nont}, we can show that $\frac{1}{4}\|v_n\|_{L_x^4(\mathbb{R}^3)}^4\geq-\mathcal{J}_\infty(m')+o_n(1)$. Thus, by minimality, it follows that 
$$\begin{aligned}
\mathcal{J}_\omega(m)\leq E_\omega\left(\sqrt{\tfrac{m}{m'}}v_n\right)&=\frac{\omega}{2}\frac{m}{m'}\|v_n\|_{\dot{\Sigma}_y}^2+\frac{1}{2}\frac{m}{m'}\|\partial_z v_n\|_{L_x^2(\mathbb{R}^3)}^2-\frac{m^2}{4(m')^2}\|v_n\|_{L_x^4(\mathbb{R}^3)}^4\\
&=\frac{m}{m'}E_\omega(v_n)-\frac{m}{4m'}\frac{m-m'}{m'}\|v_n\|_{L_x^4(\mathbb{R}^3)}^4\\
&=\frac{m}{m'}\mathcal{J}_\omega(m')+\frac{m}{m'}\frac{m-m'}{m'}\mathcal{J}_\infty(m')+o_n(1),
\end{aligned}$$
and thus
$$\mathcal{J}_\omega(m')\ge \frac{m^\prime}{m}\mathcal{J}_\omega(m)-\frac{m-m'}{m'}\mathcal{J}_\infty(m')+o_n(1).$$
On the other hand, by switching the roles of $m^\prime$ and $m-m^\prime$, one can show that 
$$\mathcal{J}_\omega(m-m')\ge \frac{m-m^\prime}{m}\mathcal{J}_\omega(m)-\frac{m'}{m-m'}\mathcal{J}_\infty(m-m')+o_n(1).$$
Then, inserting these two lower bounds in \eqref{e1}, we obtain 
\begin{align*}
\mathcal{J}_\omega(m)&\geq \mathcal{J}_\omega(m')+\mathcal{J}_\omega(m-m')+o_n(1)\\
&\ge \mathcal{J}_\omega(m)-\frac{m-m'}{m'}\mathcal{J}_\infty(m')-\frac{m'}{m-m'}\mathcal{J}_\infty(m-m')+o_n(1),
\end{align*}
which deduces a contradiction with $\mathcal{J}_\infty(m'), \mathcal{J}_\infty(m-m')<0$ (see Lemma \ref{upp}). Thus, we conclude that $u_n\to u_\infty$ in $L_x^2(\mathbb{R}^3)$ and $\|u_\infty\|_{L_x^2(\mathbb{R}^3)}^2=m$.

We claim that $u_n\rightarrow u_\infty$ in $\Sigma$ and $u_\infty$ is a minimizer for $\mathcal{J}_\omega(m)$. The $L_x^2(\mathbb{R}^3)$ convergence $u_n\to u_\infty$ and the Gagliardo--Nirenberg inequality $\|u\|_{L_x^4(\R^3)}^4\lesssim\|u\|_{L_x^2(\R^3)}\|u\|_{H_x^1(\R^3)}^3$ yield the convergence $u_n\to u_\infty$ in $L_x^4(\mathbb{R}^3)$ and $E_\omega(u_n-u_\infty)\geq \|u_n- u_\infty\|_\Sigma^2+ o_n(1)$. Thus, it follows from the argument used to derive \eqref{e1} that $\mathcal{J}_\omega(m)=E_\omega(u_n)+o_n(1)=E_\omega(u_\infty)+E_\omega(u_n-u_\infty)+o_n(1)\ge \mathcal{J}_\omega(m)+\|u_n- u_\infty\|_\Sigma^2+o_n(1)$.

Because the minimizer $u_\infty$ is of the form $u_\infty(x)=u_\infty(|y|, |z|)$ up to translation and phase shift, and it is non-negative and decreases with respect to $y$ and $z$, that is, $u_\infty=e^{i\theta}Q_\omega(|y|,|z-z_0|)$ for some $\theta, z_0\in \R$, we refer to \cite[Theorem 2]{BBJV}.
\end{proof}

\subsection{Uniform bounds, and vanishing higher eigenstates}
Because a minimizing sequence converges (passing to a subsequence and up to symmetries), taking $n\to \infty$ in the estimate \eqref{lba}, we obtain the following preliminary bound:

\begin{lemma}[Preliminary uniform bound]\label{basic uniform bound}
For $\omega \ge (C_{GN})^4m^2$, let $Q_\omega$ be the minimizer constructed in Theorem \ref{th1}. Then, $ \omega \|Q_\omega\|_{\dot{\Sigma}_y}^2+   \|\partial_z Q_\omega\|_{L_x^2(\mathbb{R}^3)}^2$ is uniformly bounded in $\omega$.
\end{lemma}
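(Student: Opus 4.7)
The plan is to apply the estimate \eqref{lba} directly to the limit $Q_\omega$. The derivation of \eqref{lba} in the existence proof uses only three ingredients: the Gagliardo--Nirenberg inequality from Lemma \ref{GN type inequality}, the Cauchy--Schwarz splitting that produces the $\frac{1}{4}\|\partial_z \cdot\|_{L_x^2}^2$ term, and the additional constraint $\|u_n\|_{\dot{\Sigma}_y}^2 \le \sqrt{\omega}$ together with the hypothesis $\omega \ge (C_{GN})^4 m^2$. Since $Q_\omega$ itself is admissible for $\mathcal{J}_\omega(m)$, i.e.\ $M(Q_\omega)=m$ and $\|Q_\omega\|_{\dot{\Sigma}_y}^2\le\sqrt{\omega}$, all three ingredients apply verbatim with $u_n$ replaced by $Q_\omega$.

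Concretely, I would either repeat the chain of inequalities in \eqref{lba} with $Q_\omega$ in place of $u_n$ (no $o_n(1)$ term), or equivalently invoke the convergence $u_n\to Q_\omega$ in $\Sigma$ (after the subsequence and translations from Theorem \ref{th1}) to pass to the limit in \eqref{lba}. Either route yields
\begin{equation*}
E_\omega(Q_\omega) \;\ge\; \frac{\omega}{4}\|Q_\omega\|_{\dot{\Sigma}_y}^2 + \frac{1}{4}\|\partial_z Q_\omega\|_{L_x^2(\mathbb{R}^3)}^2 - \frac{(C_{GN})^2 m^3}{8}.
\end{equation*}
Since $E_\omega(Q_\omega)=\mathcal{J}_\omega(m)$, Lemma \ref{upp} gives $E_\omega(Q_\omega)\le \mathcal{J}_\infty(m)<0$, and rearranging yields the uniform bound
\begin{equation*}
\omega \|Q_\omega\|_{\dot{\Sigma}_y}^2 + \|\partial_z Q_\omega\|_{L_x^2(\mathbb{R}^3)}^2 \;\le\; \frac{(C_{GN})^2 m^3}{2},
\end{equation*}
independent of $\omega \ge (C_{GN})^4 m^2$.

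There is really no significant obstacle here; the lemma is a direct corollary of the coercive lower bound already established inside the existence proof. The only subtle point to verify is that the admissibility condition $\|Q_\omega\|_{\dot{\Sigma}_y}^2\le\sqrt{\omega}$ holds for the minimizer (in fact strictly, by Corollary \ref{cor: forbidden region}), which is exactly what licenses the last inequality in \eqref{lba}. Note also that the bound is sharper than claimed in the statement: not only is the sum bounded, but $\|Q_\omega\|_{\dot{\Sigma}_y}^2 = O(\omega^{-1})$, which foreshadows the quantitative dimension-reduction rates that will be extracted in Section \ref{dimreduc}.
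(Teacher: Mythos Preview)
Your proposal is correct and matches the paper's own proof essentially line for line: the paper simply applies the chain of inequalities \eqref{lba} with $Q_\omega$ in place of $u_n$ (using $\mathcal{J}_\omega(m)<0$ from Lemma~\ref{upp}) to obtain exactly the displayed lower bound you wrote. Your additional remarks about admissibility and the $O(\omega^{-1})$ decay of $\|Q_\omega\|_{\dot{\Sigma}_y}^2$ are accurate and anticipate Lemma~\ref{shp}.
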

\begin{proof}
Following the arguments in \eqref{lba}, we have
\begin{equation*}
\begin{aligned}
0 >\mathcal{J}_\omega(m)=E_\omega( Q_\omega) \geq   \frac{\omega}{4}\|Q_\omega\|_{\dot{\Sigma}_y}^2 +\frac{1}{4}\|\partial_z Q_\omega\|_{L_x^2(\mathbb{R}^3)}^2-\frac{(C_{GN})^2m^3}{8}.
\end{aligned}
\end{equation*}
\end{proof}

We upgrade the above bound using the Euler--Lagrange equation \eqref{3D EL equation}. First, using a standard iterative argument for elliptic regularity, we prove weighted high Sobolev norm bounds.

\begin{lemma}[Weighted high Sobolev norm bounds]\label{hig}
For $\omega \ge (C_{GN})^4m^2$, let $Q_\omega$ be the minimizer constructed in Theorem \ref{th1}. Then,  for any $k\in \mathbb{N}$, we have
$$\sup_{\omega \ge (C_{GN})^4m^2}\big\| (H_y  -\partial_z^2)^k Q_\omega\big\|_{L^2_x(\R^3)}<\infty.$$
In particular, $Q_\omega$ is uniformly bounded in $L_x^\infty(\mathbb{R}^3)$.
\end{lemma}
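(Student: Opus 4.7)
The plan is to bootstrap elliptic regularity from the Euler--Lagrange equation \eqref{3D EL equation}, written as
$$\omega AQ_\omega+BQ_\omega=F_\omega,\qquad A:=H_y-2,\quad B:=-\partial_z^2,\quad F_\omega:=Q_\omega^3-\mu_\omega Q_\omega.$$
The structural fact I exploit is that $A$ and $B$ are commuting nonnegative self-adjoint operators (they act in disjoint variables), so for every $u$ in the appropriate domain
$$\|\omega Au+Bu\|_{L^2_x}^2=\omega^2\|Au\|_{L^2_x}^2+\|Bu\|_{L^2_x}^2+2\omega\bigl\langle u,ABu\bigr\rangle,$$
with nonnegative cross term (since $AB\ge 0$ as a product of commuting positive operators). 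Applying $A^{j-1}B^i$ (for $j\ge 1$) to the EL equation and specializing this identity to $u=A^{j-1}B^iQ_\omega$ yields the clean separate bounds
\begin{align*}
\|A^jB^iQ_\omega\|_{L^2_x}&\le\omega^{-1}\|A^{j-1}B^iF_\omega\|_{L^2_x},\\
\|A^{j-1}B^{i+1}Q_\omega\|_{L^2_x}&\le\|A^{j-1}B^iF_\omega\|_{L^2_x},
\end{align*}
so the problem reduces to controlling $\|A^jB^iF_\omega\|_{L^2_x}$.

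A brief preliminary: $|\mu_\omega|\lesssim 1$ by pairing the EL equation with $Q_\omega$, which gives $\mu_\omega m=\|Q_\omega\|_{L^4_x}^4-\omega\|Q_\omega\|_{\dot{\Sigma}_y}^2-\|\partial_z Q_\omega\|_{L^2_x}^2$, and each term is $O(1)$ by Lemma \ref{basic uniform bound} plus Sobolev embedding. For the base case $k=1$, $\|F_\omega\|_{L^2_x}\lesssim\|Q_\omega\|_{L^6_x}^3+\|Q_\omega\|_{L^2_x}\lesssim 1$ via $H^1_x(\R^3)\hookrightarrow L^6_x$ and Lemma \ref{basic uniform bound}; the inequalities above then give $\|AQ_\omega\|_{L^2_x}\lesssim\omega^{-1}$ and $\|BQ_\omega\|_{L^2_x}\lesssim 1$, hence $\|(H_y-\partial_z^2)Q_\omega\|_{L^2_x}\lesssim 1$. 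This already controls $\|Q_\omega\|_{H^2_x(\R^3)}$ uniformly (use $\langle H_yu,-\partial_z^2u\rangle\ge 0$ to split $\|H_yQ_\omega\|$ and $\|\partial_z^2Q_\omega\|$ separately, then $H_y=-\Delta_y+|y|^2$ together with Lemma \ref{basic uniform bound} recovers $\|\Delta_yQ_\omega\|$), so the 3d embedding $H^2_x\hookrightarrow L^\infty_x$ delivers the stated uniform $L^\infty$ bound.

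For the induction I assume $\|A^jB^iQ_\omega\|_{L^2_x}\lesssim\omega^{-j}$ for all $i+j\le k-1$ and derive the same at level $i+j=k$. To bound $\|A^jB^iF_\omega\|_{L^2_x}$ for $i+j\le k-1$, I expand $A^jB^i(Q_\omega^3)$ via Leibniz-type formulas for $A=-\Delta_y+|y|^2-2$ and $B=-\partial_z^2$ on the triple product; this produces a finite sum of terms of the form $(\text{polynomial in }y)\cdot\prod_{\ell=1}^3(A^{j_\ell}B^{i_\ell}Q_\omega)$ with $\sum_\ell(j_\ell+i_\ell)\le k-1$, each of which I estimate in $L^2_x$ by placing one factor in $L^2_x$ via the inductive hypothesis and the remaining two in $L^\infty_x$ via 3d Sobolev embedding applied to a slightly higher-order inductive estimate. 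The polynomial $|y|$-weights are re-expressed via $|y|^2=H_y+\Delta_y$ in terms of already-controlled harmonic-oscillator and spatial derivatives, and Lemma \ref{interpo} supplies intermediate-order bounds wherever a fractional number of derivatives appears. Feeding this back into the commuting-operator decomposition closes the induction at level $k$, and expanding $(H_y-\partial_z^2)^k=(A+B+2)^k$ finally assembles the uniform bound $\|(H_y-\partial_z^2)^kQ_\omega\|_{L^2_x}\lesssim 1$.

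The main obstacle is the combinatorial bookkeeping in the Leibniz expansion: because $A$ contains the quadratic weight $|y|^2$ it fails to satisfy a pure Leibniz rule, so every time an $A$ is distributed across the product $Q_\omega^3$ one picks up $|y|$-weighted and $\nabla_y$-gradient commutator terms. Verifying that each such weighted factor still lies in a harmonic-oscillator Sobolev space of order $\le 2(k-1)$ controlled by the inductive hypothesis is precisely what makes the commuting-nonnegative-operator decomposition (which prevents wasted $\omega$-powers from contaminating the $B$-derivative bounds) and Lemma \ref{interpo} (for fractional-order interpolation) both indispensable.
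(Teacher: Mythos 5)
Your overall architecture is the same bootstrap as the paper's: apply powers of the commuting operators to the Euler--Lagrange equation, absorb the linear part, and reduce to estimating derivatives of $Q_\omega^3$ by Sobolev embedding and induction. Your absorption device (the expansion $\|\omega Au+Bu\|_{L^2_x}^2=\omega^2\|Au\|_{L^2_x}^2+\|Bu\|_{L^2_x}^2+2\omega\|A^{1/2}B^{1/2}u\|_{L^2_x}^2$ for the commuting nonnegative operators $A=H_y-2$, $B=-\partial_z^2$, together with $|\mu_\omega|\lesssim 1$ from pairing the equation with $Q_\omega$) is a legitimate variant of the paper's mechanism, which instead splits into $P_0$ and $P_1$, uses the spectral gap $\Lambda_1>2$ and the sign $\mu_\omega\ge 0$; your base case $k=1$ and the $H^2_x\hookrightarrow L^\infty_x$ conclusion are fine.

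There is, however, a genuine flaw in the induction: the hypothesis $\|A^jB^iQ_\omega\|_{L^2_x}\lesssim\omega^{-j}$ cannot be propagated and is in fact false for $j\ge 2$. Your displayed inequality gives $\|A^jB^iQ_\omega\|_{L^2_x}\le\omega^{-1}\|A^{j-1}B^iF_\omega\|_{L^2_x}$, so to reach $\omega^{-j}$ you would need $\|A^{j-1}B^iF_\omega\|_{L^2_x}\lesssim\omega^{-(j-1)}$; but the cubic term gains no powers of $\omega$ under $A$, because $A$ does not annihilate the Gaussian profile of $Q_\omega^3$. Concretely, at leading order $Q_\omega\approx Q_\infty(z)\Phi_0(y)$ and $A\big(Q_\infty^3\Phi_0^3\big)=Q_\infty^3\,(4-8|y|^2)\Phi_0^3\neq 0$, so $\|AF_\omega\|_{L^2_x}\sim 1$; correspondingly, applying $A$ to $\omega AQ_\omega=Q_\omega^3-\mu_\omega Q_\omega+\partial_z^2Q_\omega$ shows $\|A^2Q_\omega\|_{L^2_x}\sim\omega^{-1}$, not $\omega^{-2}$. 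The terms of your Leibniz expansion carrying $|y|$-weights and single $\nabla_y$'s on different factors are exactly the $O(1)$ contributions, so ``placing one factor in $L^2$ via the inductive hypothesis'' cannot recover $\omega^{-j}$. The repair is immediate and reduces you to the paper's proof: the lemma asserts only uniform boundedness, so weaken the induction hypothesis to $\|A^jB^iQ_\omega\|_{L^2_x}\lesssim 1$ for $i+j\le k-1$ (keeping the extra $\omega^{-1}$ only for the single factor of $A$ that your first inequality genuinely provides, if desired), bound $\|A^{j'}B^{i'}F_\omega\|_{L^2_x}\lesssim 1$ for $i'+j'\le k-1$ by distributing derivatives, the domain characterization of $H_y$, Lemma \ref{interpo} and Sobolev embedding as you describe, and then your two inequalities close the induction and give the stated uniform bound on $\|(H_y-\partial_z^2)^kQ_\omega\|_{L^2_x}$.
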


\begin{proof}
By using \eqref{3D EL equation}, the energy of the minimizer can be written as 
$$
E_\omega(Q_\omega)=-\frac{\mu_\omega}{2}\|Q_\omega\|_{L^2_x(\R^3)}^2+\frac{1}{4}\|Q_\omega\|_{L^4_x(\R^3)}^4\geq -\frac{\mu_\omega m}{2}.
$$ However, because the minimum energy is negative (Lemma \ref{upp}), we have
\begin{equation}\label{mmu}
\mu_\omega\geq 0.
\end{equation}
For $k=1$, we decompose 
$$
(H_y  -\partial_z^2) Q_\omega=(H_y  -\partial_z^2) (P_0Q_\omega)+(H_y  -\partial_z^2)(P_1Q_\omega)=2P_0Q_\omega -\partial_z^2(P_0Q_\omega)+(H_y  -\partial_z^2)(P_1Q_\omega)
$$ and estimate 
$$\begin{aligned}
\big\|(H_y  -\partial_z^2) Q_\omega\big\|_{L^2_x(\R^3)}&\leq2\|P_0Q_\omega\|_{L^2_x(\R^3)}+\|\partial_z^2(P_0Q_\omega)\|_{L^2_x(\R^3)}+\big\|(H_y  -\partial_z^2) P_1Q_\omega\big\|_{L^2_x(\R^3)}\\
&\lesssim \|Q_\omega\|_{L^2_x(\R^3)}+ \big\|\left(\omega( H_y-2)  -\partial_z^2+\mu_\omega\right) Q_\omega\big\|_{L^2_x(\R^3)},
\end{aligned}$$
where we used the spectrum gap
$$
\big\|H_y (P_1Q_\omega)\big\|_{L^2_x(\R^3)}\lesssim \big\|(H_y-2) (P_1Q_\omega)\big\|_{L^2_x(\R^3)}.
$$
Then, using the equation \eqref{3D EL equation} and Lemma \ref{basic uniform bound}, we prove that 
$$\begin{aligned}
\big\|(H_y  -\partial_z^2) Q_\omega\big\|_{L^2_x(\R^3)}&\lesssim \|Q_\omega\|_{L^2_x(\R^3)}+\|Q_\omega^3\|_{L^2_x(\R^3)}=\|Q_\omega\|_{L^2_x(\R^3)}+\|Q_\omega\|_{L^6_x(\R^3)}^3\\
&\lesssim 1+\|Q_\omega\|_{H^1_x(\R^3)}^3\lesssim 1,
\end{aligned}$$
where the implicit constants are independent of $\omega$.

For $k=2$, by repeating the above estimates with equation \eqref{3D EL equation} and the commutative properties
$$
(H_y  -\partial_z^2)  (P_1Q_\omega)=P_1\big((H_y  -\partial_z^2)Q_\omega\big) \textup{ and }   H_y  (H_y  -\partial_z^2)=  (H_y  -\partial_z^2)H_y,
$$
 we write 
$$\begin{aligned}
\big\|(H_y  -\partial_z^2)^2 Q_\omega\big\|_{L^2_x(\R^3)}&=\big\|(H_y  -\partial_z^2)^2 P_0Q_\omega\big\|_{L^2_x(\R^3)}+\big\|(H_y  -\partial_z^2)^2 P_1Q_\omega\big\|_{L^2_x(\R^3)}\\
&\lesssim \|Q_\omega\|_{L^2_x(\R^3)}+\big\|(H_y  -\partial_z^2)(\omega(H_y  -2 )-\partial_z^2+\mu_\omega)Q_\omega\big\|_{L^2_x(\R^3)}\\
&=\|Q_\omega\|_{L^2_x(\R^3)}+\big\|(H_y  -\partial_z^2)(Q_\omega^3)\big\|_{L^2_x(\R^3)}.\end{aligned}$$ 
Then, by distributing derivatives in $(H_y  -\partial_z^2)(Q_\omega^3)$ and using the Sobolev embedding $H^2(\mathbb{R}^3)\hookrightarrow L^\infty(\mathbb{R}^3)$, we can obtain a uniform bound on $\|(H_y  -\partial_z^2)^2 Q_\omega\|_{L^2_x(\R^3)}$ using the uniform bound in the previous step. Proceeding inductively, we deduce the lemma for all $k\geq 2$.
\end{proof}

Next, we sharpen the bound in Lemma \ref{basic uniform bound} and prove the convergence of the minimum energy and the Lagrange multiplier.
\begin{lemma}\label{shp}
For $\omega \ge (C_{GN})^4m^2$, let $Q_\omega$ be the minimizer constructed in Theorem \ref{th1}.
\begin{enumerate}
\item (Minimum energy convergence)
$$\mathcal{J}_\omega(m)=\mathcal{J}_\infty(m)+O(\omega^{-1}).$$
\item (Vanishing higher eigenstates) 
$$
\|P_1Q_\omega\|_{L_x^2(\mathbb{R}^3)}\lesssim\|Q_\omega\|_{\dot{\Sigma}_y}\lesssim\frac{1}{\omega}\quad\textup{and}\quad \|\partial_z (P_1Q_\omega)\|_{L_x^2(\mathbb{R}^3)}\lesssim\frac{1}{\sqrt{\omega}}.$$
\item (Lagrange multiplier convergence)
$$\mu_\omega=\mu_\infty+O(\omega^{-1}).$$
\end{enumerate}
\end{lemma}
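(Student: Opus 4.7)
The plan is to prove Parts (1) and (2) simultaneously from a sharp energy decomposition of $Q_\omega$ along the $\Phi_0$-direction, and to obtain Part (3) from a Pohozaev-type identity for \eqref{3D EL equation} combined with the energy estimate from Part (1).

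For Part (1), I would decompose $Q_\omega = f(z)\Phi_0(y) + g(x)$ with $f = Q_{\omega,\parallel}$ and $g = P_1 Q_\omega$, and expand
\begin{equation*}
E_\omega(Q_\omega) = E_\infty(f) + \tfrac{\omega}{2}\|g\|_{\dot{\Sigma}_y}^2 + \tfrac{1}{2}\|\partial_z g\|_{L_x^2(\R^3)}^2 - \int_{\R^3} f^3 \Phi_0^3 \, g \, dx + R,
\end{equation*}
using $\tfrac14\|f\Phi_0\|_{L_x^4}^4 = \tfrac{1}{8\pi}\|f\|_{L_z^4}^4$ from \eqref{L^4 norm of Phi_0}, where $R$ collects the cross terms quadratic and higher in $g$. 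The uniform $L_x^\infty$ bound (Lemma \ref{hig}) combined with the spectral gap $\|g\|_{L_x^2} \leq (\Lambda_1 - 2)^{-1/2}\|g\|_{\dot{\Sigma}_y}$ makes $R = O(\|g\|_{L_x^2}^2) = O(\|g\|_{\dot{\Sigma}_y}^2)$, while $|\int f^3 \Phi_0^3 g \, dx| \le \|f^3\Phi_0^3\|_{L_x^2}\|g\|_{L_x^2} \lesssim \|g\|_{\dot{\Sigma}_y}$. An AM--GM absorption into $\omega\|g\|_{\dot{\Sigma}_y}^2$ yields, for $\omega$ large,
\begin{equation*}
E_\omega(Q_\omega) \ge E_\infty(f) + \tfrac{\omega}{4}\|g\|_{\dot{\Sigma}_y}^2 + \tfrac{1}{2}\|\partial_z g\|_{L_x^2}^2 - C/\omega.
\end{equation*}
Since $\mathcal{J}_\infty(m) = cm^3$ with $c < 0$ (by scaling), $\mathcal{J}_\infty$ is strictly decreasing, so $E_\infty(f) \ge \mathcal{J}_\infty(\|f\|_{L_z^2}^2) \ge \mathcal{J}_\infty(m)$ and hence $\mathcal{J}_\omega(m) \ge \mathcal{J}_\infty(m) - C/\omega$. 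The matching upper bound $\mathcal{J}_\omega(m) \le E_\omega(\Phi_0 Q_\infty) = \mathcal{J}_\infty(m)$ is the test-function bound already in Lemma \ref{upp}.

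Part (2) drops out of the same inequality: combining $\mathcal{J}_\omega(m) - \mathcal{J}_\infty(m) \le 0$ with the above lower bound gives $\tfrac{\omega}{4}\|g\|_{\dot{\Sigma}_y}^2 + \tfrac{1}{2}\|\partial_z g\|_{L_x^2}^2 \le C/\omega$, whence $\|Q_\omega\|_{\dot{\Sigma}_y} = \|g\|_{\dot{\Sigma}_y} \lesssim \omega^{-1}$, $\|\partial_z (P_1Q_\omega)\|_{L_x^2} = \|\partial_z g\|_{L_x^2} \lesssim \omega^{-1/2}$, and $\|P_1 Q_\omega\|_{L_x^2} \lesssim \|g\|_{\dot{\Sigma}_y} \lesssim \omega^{-1}$ by the spectral gap.

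For Part (3), I would derive Pohozaev-type identities in both problems. Testing \eqref{3D EL equation} with $z\partial_z Q_\omega$ and integrating by parts, using $\int_y (H_y - 2)Q_\omega \, \partial_z Q_\omega \, dy = \tfrac12 \partial_z \|Q_\omega(\cdot,z)\|_{\dot{\Sigma}_y}^2$ by self-adjointness of $H_y$, gives
\begin{equation*}
-\tfrac{\omega}{2}\|Q_\omega\|_{\dot{\Sigma}_y}^2 + \tfrac{1}{2}\|\partial_z Q_\omega\|_{L_x^2}^2 + \tfrac{1}{4}\|Q_\omega\|_{L_x^4}^4 = \tfrac{\mu_\omega m}{2}.
\end{equation*}
Pairing \eqref{3D EL equation} with $Q_\omega$ gives $\omega\|Q_\omega\|_{\dot{\Sigma}_y}^2 + \|\partial_z Q_\omega\|_{L_x^2}^2 - \|Q_\omega\|_{L_x^4}^4 = -\mu_\omega m$. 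Adding and subtracting these produces the clean pair
\begin{equation*}
\|\partial_z Q_\omega\|_{L_x^2}^2 = \tfrac{1}{4}\|Q_\omega\|_{L_x^4}^4 \quad\text{and}\quad \mu_\omega m = 3\|\partial_z Q_\omega\|_{L_x^2}^2 - \omega\|Q_\omega\|_{\dot{\Sigma}_y}^2,
\end{equation*}
with the corresponding 1d identities $\|\partial_z Q_\infty\|_{L_z^2}^2 = \tfrac{1}{8\pi}\|Q_\infty\|_{L_z^4}^4$ and $\mu_\infty m = 3\|\partial_z Q_\infty\|_{L_z^2}^2$. By Part (2), $\omega\|Q_\omega\|_{\dot{\Sigma}_y}^2 = O(\omega^{-1})$, $\|\partial_z Q_\omega\|_{L_x^2}^2 = \|\partial_z f\|_{L_z^2}^2 + O(\omega^{-1})$, and by the same $L_x^\infty$/spectral-gap bookkeeping as in Part (1), $\|Q_\omega\|_{L_x^4}^4 = \tfrac{1}{2\pi}\|f\|_{L_z^4}^4 + O(\omega^{-1})$. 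Substituting these into the 3d Pohozaev identity gives an approximate 1d Pohozaev identity for $f$, namely $\|f\|_{L_z^4}^4 = 8\pi\|\partial_z f\|_{L_z^2}^2 + O(\omega^{-1})$. Plugging this into the energy relation $E_\infty(f) = \mathcal{J}_\omega(m) + O(\omega^{-1}) = -\mu_\infty m/6 + O(\omega^{-1})$ (a byproduct of Part (1), using $\mathcal{J}_\infty(m) = -\mu_\infty m/6$ from the 1d Pohozaev/pairing pair) pins down $\|\partial_z f\|_{L_z^2}^2 = \|\partial_z Q_\infty\|_{L_z^2}^2 + O(\omega^{-1})$. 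Consequently $\mu_\omega m = 3\|\partial_z f\|_{L_z^2}^2 + O(\omega^{-1}) = \mu_\infty m + O(\omega^{-1})$.

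The main obstacle I foresee is the final step of Part (3). A naive coercivity argument for $E_\infty$ near $Q_\infty$ only yields $\|f - Q_\infty\|_{H^1_z} = O(\omega^{-1/2})$, which would force $\mu_\omega - \mu_\infty = O(\omega^{-1/2})$ at best. The Pohozaev combination circumvents this bottleneck: once Part (2) reduces the 3d Pohozaev identity to an approximate 1d Pohozaev identity for $f$, the energy relation fixes $\|\partial_z f\|_{L_z^2}^2$ to the right value at rate $O(\omega^{-1})$ without requiring any sharp $H^1$ convergence of $f$ to $Q_\infty$.
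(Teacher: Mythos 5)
Your proposal is correct, and it splits from the paper in an interesting way. For parts (1)--(2) the paper argues in two stages: it first gets $\|Q_\omega\|_{\dot{\Sigma}_y}\lesssim \omega^{-1}$ directly from the Euler--Lagrange equation, using $\mu_\omega\ge 0$ and the spectral gap to write $\|(H_y-2)Q_\omega\|_{L^2_x}\le \omega^{-1}\|Q_\omega^3\|_{L^2_x}\lesssim\omega^{-1}$, and only then compares energies, testing $\mathcal{J}_\infty(m)$ with the renormalized function $\sqrt{m/m_\omega}\,Q_{\omega,\parallel}$ to obtain $\omega\|Q_\omega\|_{\dot{\Sigma}_y}^2+\|\partial_z(P_1Q_\omega)\|_{L^2_x}^2\lesssim\omega^{-1}$ and the energy convergence. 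You instead run a single energy expansion in the decomposition $Q_\omega=f\Phi_0+g$, absorb the term linear in $g$ by Cauchy--Schwarz into $\tfrac{\omega}{2}\|g\|_{\dot{\Sigma}_y}^2$ at cost $O(\omega^{-1})$, and replace the mass renormalization by the monotonicity $E_\infty(f)\ge\mathcal{J}_\infty(\|f\|_{L^2_z}^2)\ge\mathcal{J}_\infty(m)$ coming from the scaling law $\mathcal{J}_\infty(m)=m^3\mathcal{J}_\infty(1)<0$; this buys you parts (1) and (2) in one stroke, without the preliminary elliptic bound and without invoking $\mu_\omega\ge0$ (you still need the uniform bounds of Lemmas \ref{basic uniform bound} and \ref{hig} to control the cross terms, as the paper does). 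For part (3) your route is essentially the paper's: the same two identities (pairing \eqref{3D EL equation} with $Q_\omega$ and with $z\partial_z Q_\omega$) plus the minimum-energy convergence; you merely take a small detour through $f$, whereas the paper eliminates $\|\partial_zQ_\omega\|_{L^2}^2$ and $\|Q_\omega\|_{L^4}^4$ to write $\mathcal{J}_\omega(m)=-\tfrac{\mu_\omega m}{6}+O(\omega^{-1})$ and compares with $\mathcal{J}_\infty(m)=-\tfrac{\mu_\infty m}{6}$ directly (your algebra $\mu_\omega m=3\|\partial_zQ_\omega\|_{L^2_x}^2-\omega\|Q_\omega\|_{\dot{\Sigma}_y}^2$ is the correct elimination; the paper's displayed coefficient on the $\omega\|Q_\omega\|_{\dot{\Sigma}_y}^2$ term differs, but that term is $O(\omega^{-1})$ either way, so the conclusions agree).
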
  

\begin{proof}
First, we show that 
\begin{equation}\label{co0}
\|P_1Q_\omega\|_{L^2_x(\R^3)}\lesssim\|Q_\omega\|_{\dot{\Sigma}_y}\lesssim\frac{1}{\omega}.
\end{equation}
Because $\Lambda_1>2$ and $\mu_\omega\geq0$ (see \eqref{mmu}), we have
$$\begin{aligned}
\|P_1Q_\omega\|_{L^2_x(\R^3)}&\lesssim \| (H_y  -2 ) P_1Q_\omega\|_{L^2_x(\R^3)}=\| (H_y  -2 ) Q_\omega\|_{L^2_x(\R^3)}\\
&\le  \frac{1}{\omega}\|(\omega(H_y  -2 ) -\partial_z^2+\mu_\omega) Q_\omega\|_{L^2_x(\R^3)}=\frac{1}{\omega}\|Q_\omega^3\|_{L^2_x(\R^3)},
\end{aligned}$$
where equation \eqref{3D EL equation} is used in the last step. Then, by Lemma \ref{hig}, we obtain the desired bound \eqref{co0}.

For the remainder of the proof, we compare the energies of the two minimizers $Q_\omega(x)$ and $Q_\infty(z)$. For the 1d energy, we consider the $\Phi_0(y)$-directional component of $Q_\omega$, that is, $Q_{\omega,\parallel}(z)=\langle Q_\omega(\cdot,z),\Phi_0\rangle_{L_y^2(\mathbb{R}^2)}$. Note that by \eqref{co0}, its mass $m_\omega=\| Q_{\omega,\parallel}\|_{L_z^2(\mathbb{R})}^2$ satisfies $m_\omega=m+O(\frac{1}{\omega^2})$. Moreover, by Lemmas \ref{hig} and \eqref{co0},
we have $$
\| Q_\omega\|_{L^4_x(\R^3)}^4- \| P_0Q_\omega\|_{L^4_x(\R^3)}^4\lesssim\int_{\R^3}\left(Q_\omega^3+|P_0Q_\omega|^3\right)|P_1 Q_\omega|dx\lesssim\|P_1Q_\omega\|_{L_x^2(\mathbb{R}^3)}\lesssim\frac{1}{\omega}.
$$ 
Hence, the admissible function $\sqrt{\tfrac{m}{m_\omega}}Q_{\omega,\parallel}$ for the problem $\mathcal{J}_\infty(m)$ satisfies 
$$\begin{aligned}
E_\infty(Q_\infty)&\le E_\infty( \sqrt{\tfrac{m}{m_\omega}}Q_{\omega,\parallel})= E_\infty( Q_{\omega,\parallel})+O(\omega^{-2})\\
&=\frac12 \|\partial_z (P_0Q_\omega)\|_{L^2_x(\R^3)}^2-\frac14 \|P_0Q_\omega\|_{L^4_x(\R^3)}^4+O(\omega^{-2})\\
&=E_\omega(Q_\omega)-\frac{\omega}{2}\|Q_\omega\|_{\dot{\Sigma}_y}^2-\frac12\|\partial_z (P_1Q_\omega)\|_{L^2_x(\R^3)}^2+O(\omega^{-1}).
\end{aligned}$$
Since $E_\omega(Q_\omega)\leq E_\infty(Q_\infty)$, it follows that 
\begin{equation}\label{co4}
 \omega \|Q_\omega\|_{\dot{\Sigma}_y}^2+\|\partial_z(P_1Q_\omega)\|_{L^2_x(\R^3)}^2\lesssim\frac{1}{\omega}.
\end{equation}
Then, inserting this back, we obtain the minimum energy convergence $\mathcal{J}_\omega(m)=\mathcal{J}_\infty(m)+O(\omega^{-1})$. 
\begin{equation}\label{co1}
E_\omega(Q_\omega)\leq E_\infty(Q_\infty)\leq E_\infty( \sqrt{\tfrac{m}{m_\omega}}Q_{\omega,\parallel})= E_\omega(Q_\omega)+O(\omega^{-1}).
\end{equation}

It remains to show the convergence of the Lagrange multiplier. To estimate the difference between $\mu_\omega$ and $\mu_\infty$, we express the energies of the two minimizers using the Pohozaev-type identities. Here, the trick is to take the inner product with $z\partial_zQ_\omega$ rather than the usual choice $x\cdot\nabla_x Q_\omega$ because the $z$-direction is dominant in the limit. Specifically, by multiplying the elliptic equation \eqref{3D EL equation} by $Q_\omega$ and $z\partial_z Q_\omega$ and integrating over $\R^3_x$, we have
\begin{align*}
0&=\langle\omega(H_y  -2 )Q_\omega-\partial_z^2Q_\omega- Q_\omega^3+\mu_\omega Q_\omega, Q_\omega \rangle_{L^2_x(\R^3)}\\
&=\omega\|Q_\omega\|_{\dot{\Sigma}_y}^2+ \|\partial_zQ_\omega\|_{L^2_x(\R^3)}^2- \| Q_\omega\|_{L^4_x(\R^3)}^4+\mu_\omega m
\end{align*}
and
\begin{align*}
0&=\langle\omega(H_y  -2 )Q_\omega-\partial_z^2Q_\omega- Q_\omega^3+\mu_\omega Q_\omega,z\partial_z Q_\omega \rangle_{L^2_x(\R^3)}\\
&=\int_{\R^3}\frac{\omega z}{2}\partial_z(\sqrt{H_y-2}Q_\omega)^2-\frac{z}{2}\partial_z(\partial_zQ_\omega)^2-\frac{z}{4}\partial_z(Q_\omega^4)+\frac{\mu_\omega z}{2}\partial_z(Q_\omega)^2dx\\
&=-\frac{\omega}{2}\|Q_\omega\|_{\dot{\Sigma}_y}^2+\frac12\|\partial_zQ_\omega\|_{L^2_x(\R^3)}^2+\frac14\| Q_\omega\|_{L^4_x(\R^3)}^4-\frac{\mu_\omega}{2}m.
\end{align*}
Solving the above system of equations for $\|\partial_zQ_\omega\|_{L^2_x(\R^3)}^2$ and $\| Q_\omega\|_{L^4_x(\R^3)}^4$ and substituting them into the energy, we obtain 
$$\mathcal{J}_\omega(m)=E_\omega(Q_\omega)=-\frac{\mu_\omega m}{6}-\frac{\omega}{6}\|Q_\omega\|_{\dot{\Sigma}_y}^2=-\frac{\mu_\omega m}{6}+O(\omega^{-1}),$$
where \eqref{co4} is used in the last step. Similarly, we can express the energy level $E_\infty(Q_\infty)$ in terms of the mass $m$ and a Lagrange multiplier $\mu_\infty$, that is, 
$$\mathcal{J}_\infty(m)=E_\infty(Q_\omega)=-\frac{\mu_\infty m}{6}.$$
Thus, it follows from the minimum energy convergence that $\mu_\omega=\mu_\infty+O(\omega^{-1})$.
\end{proof}

\section{Dimension reduction to the 1d ground state: Proof of Theorem \ref{h1cv}}\label{dimreduc}

In this section, we prove the convergence from the 3d to 1d energy minimizers. For the proof, a key ingredient is the non-degeneracy estimate of the 1d linearized operator: 
\begin{equation}\label{1d linearized operator}
\mathcal{L}_\infty=-\partial_z^2+\mu_\infty -\frac{3}{2\pi}Q_\infty^2.
\end{equation}

\begin{lemma}[Non-degeneracy estimate \cite{K, W}]\label{non-degeneracy estimate}
The linearized operator $\mathcal{L}_\infty$ satisfies 
$\| \mathcal{L}_\infty \varphi\|_{H_z^{-1}(\mathbb{R})}\gtrsim\|\varphi\|_{H^1_z(\R)}$ for all even $\varphi \in H^1_z(\R)$.
\end{lemma}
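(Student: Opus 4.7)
My plan is a Fredholm-theoretic argument combined with the classical identification of the kernel of $\mathcal{L}_\infty$ due to Kwong \cite{K} and Weinstein \cite{W}.

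First, I would verify that $\mathcal{L}_\infty \colon H^1_z(\R) \to H^{-1}_z(\R)$ is a self-adjoint Fredholm operator of index zero. Writing
$$
\mathcal{L}_\infty = (-\partial_z^2 + \mu_\infty) - \tfrac{3}{2\pi} Q_\infty^2,
$$
the first summand is a topological isomorphism $H^1_z \to H^{-1}_z$ (since $\mu_\infty > 0$), and the second, multiplication by the Schwartz-class function $Q_\infty^2$, is compact from $H^1_z$ into $L^2_z \hookrightarrow H^{-1}_z$ by a Rellich-type argument exploiting the exponential decay of $Q_\infty$. Thus $\mathcal{L}_\infty$ is a compact perturbation of an isomorphism, hence Fredholm of index zero, and manifestly self-adjoint in the $(H^1_z, H^{-1}_z)$ duality pairing.

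Next, I would recall that $\ker_{L^2_z} \mathcal{L}_\infty = \mathrm{span}\{\partial_z Q_\infty\}$. Membership of $\partial_z Q_\infty$ in the kernel is immediate upon differentiating the Euler--Lagrange equation \eqref{1D EL equation} in $z$. One-dimensionality of the kernel is the classical non-degeneracy theorem for the 1d focusing cubic NLS ground state proved in \cite{K, W}; alternatively, it can be read off from ODE asymptotics, since the second linearly independent solution of $\mathcal{L}_\infty u = 0$ grows exponentially at infinity by the Wronskian formula together with the fact that $Q_\infty^2$ decays exponentially, and is thus excluded from $L^2_z$. Since $Q_\infty$ is even, $\partial_z Q_\infty$ is odd, while $\mathcal{L}_\infty$ commutes with the parity involution (as $Q_\infty^2$ is even). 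Restricting $\mathcal{L}_\infty$ to the even subspace therefore yields a self-adjoint Fredholm operator of index zero $H^1_{z,\mathrm{even}}(\R) \to H^{-1}_{z,\mathrm{even}}(\R)$ with trivial kernel, hence a bijection, and the bounded inverse theorem gives
$$
\|\varphi\|_{H^1_z(\R)} \lesssim \|\mathcal{L}_\infty \varphi\|_{H^{-1}_z(\R)} \quad \text{for all even } \varphi \in H^1_z(\R),
$$
which is the claimed coercivity.

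The only delicate ingredient is the identification of the kernel; since this is precisely the classical non-degeneracy theorem cited in the lemma statement, I would invoke it directly from \cite{K, W} rather than reproducing its proof. Everything else reduces to standard functional-analytic boilerplate: Fredholm stability under compact perturbation, parity decomposition, and the bounded inverse theorem.
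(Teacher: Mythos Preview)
Your argument is correct, and it takes a genuinely different route from the paper's own proof. The paper argues by contradiction at the level of sequences: assuming the estimate fails, it extracts a normalized even sequence $\varphi_n$ with $\|\mathcal{L}_\infty\varphi_n\|_{H^{-1}_z}\to 0$, passes to a weak limit $\varphi_\infty$ in $H^1_z$, and then shows both that $\mathcal{L}_\infty\varphi_\infty=0$ (by testing against $C_c^\infty$ functions) and that $\varphi_\infty\neq 0$ (by pairing $\mathcal{L}_\infty\varphi_n$ with $\varphi_n$ and using the exponential decay of $Q_\infty$ to pass to the limit in the potential term), which contradicts the non-degeneracy of $Q_\infty$ from \cite{K,W} on the even sector. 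Your Fredholm approach packages the same compactness (decay of $Q_\infty^2$) into the statement that multiplication by $Q_\infty^2$ is compact $H^1_z\to H^{-1}_z$, and then replaces the sequential extraction by the bounded inverse theorem. The paper's proof is more self-contained and avoids invoking Fredholm machinery, while your argument is shorter, more structural, and transparently extends to other non-degenerate ground states or higher-dimensional analogues. Both rest on exactly the same external input, namely $\ker\mathcal{L}_\infty=\mathrm{span}\{\partial_zQ_\infty\}$.
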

\begin{proof}
If there exists an even function $\varphi_n\in H^1_z(\R)$ such that 
$$\|\varphi_n\|_{H^1_z(\R)}=1 \mbox{ and } \lim_{n\rightarrow \infty}\| \mathcal{L}_\infty \varphi_n\|_{H_z^{-1}(\mathbb{R})}=0,
$$
Then, we may assume that $\varphi_n\rightharpoonup \varphi_\infty$ in $H^1(\R^3)$ and $\varphi_n\rightarrow \varphi_\infty$ in $L^2_{loc}(\R)$ as $n\rightarrow \infty$. Because $\lim_{n\rightarrow \infty}\| \mathcal{L}_\infty \varphi_n\|_{H_z^{-1}(\mathbb{R})}=0$, we see that for $\psi\in C_0^\infty(\R)$,
$$
\lim_{n\rightarrow \infty}\langle \mathcal{L}_\infty \varphi_n, \psi\rangle_{L^2_z(\R)}=\langle \mathcal{L}_\infty \varphi_\infty, \psi\rangle_{L^2_z(\R)}=0,
$$
which implies that $\mathcal{L}_\infty \varphi_\infty=0$.
Moreover, by the exponential decay property of $Q_\infty$,
we see that
\begin{align*}
0&=\lim_{n\rightarrow \infty}\langle \mathcal{L}_\infty \varphi_n,\varphi_n\rangle_{L^2_z(\R)}=\lim_{n\rightarrow \infty}\int_{\R}(\partial_z\varphi_n)^2+\mu_\infty \varphi_n^2-Q_\infty^2\varphi_n^2dz\\
&=\lim_{n\rightarrow \infty}\int_{\R}(\partial_z\varphi_n)^2+\mu_\infty \varphi_n^2dz-\int_{\R}Q_\infty \varphi_\infty^2dz.
\end{align*}
Hence, we deduce that an even function $\varphi_\infty\neq0$ satisfies $\mathcal{L}_\infty \varphi_\infty=0$, which contradicts the results in \cite{K} and  \cite[Proposition 2.8]{W}.
\end{proof}

\begin{proof}[Proof of Theorem \ref{h1cv}]
By Lemma \ref{shp}, it suffices to show that 
\begin{equation}\label{reduced convergent estimate}
\|Q_{\omega,\parallel}(z)- Q_\infty(z)\|_{H^1_z(\R)}\lesssim\frac{1}{\omega}.
\end{equation}
It has been shown in the proof of Lemma \ref{shp} that $\{\sqrt{\tfrac{m}{m_\omega}}Q_{\omega,\parallel}\}_{\omega\geq\omega_0}$, where $Q_{\omega,\parallel}(z)=\langle Q_\omega(\cdot,z),\Phi_0\rangle_{L_y^2(\mathbb{R}^2)}$ is a minimizing sequence for the variational problem $\mathcal{J}_\infty(m)$ (see \eqref{co1}). Then, by the well-known variational property of $\mathcal{J}_\infty(m)$ \cite{Lions} and the uniqueness of the minimizer $Q_\infty$ \cite{K}, it follows that $Q_{\omega,\parallel}\to Q_\infty$ in $H^1_z(\R)$.

For the rate of convergence in \eqref{reduced convergent estimate}, using the Euler--Lagrange equation \eqref{3D EL equation}, we write the equation for $Q_{\omega,\parallel}$ as
$$
(-\partial_z^2+\mu_\omega)Q_{\omega,\parallel} =   \langle Q_\omega(\cdot,z)^3,\Phi_0(\cdot)\rangle_{L^2_y(\R^2)}.
$$
Then, the difference $r_{\omega,\parallel}=Q_{\omega,\parallel}- Q_\infty$ satisfies
\begin{equation}\label{uy1}
\begin{aligned}
\mathcal{L}_\infty r_{\omega,\parallel}&=(\mu_\infty-\mu_\omega)Q_{\omega,\parallel}+\frac{1}{2\pi}\left\{Q_{\omega,\parallel}^3-Q_\infty^3-3Q_\infty^2r_{\omega,\parallel}\right\}\\
&\quad +\left\{\langle Q_\omega(\cdot,z)^3,\Phi_0\rangle_{L^2_y(\R^2)}-\frac{1}{2\pi}Q_{\omega,\parallel}^3\right\},
\end{aligned}
\end{equation}
where $\mathcal{L}_\infty=-\partial_z^2+\mu_\infty -\frac{3}{2\pi}Q_\infty^2$. On the right-hand side of \eqref{uy1}, from Lemma \ref{shp}, we have that $\|(\mu_\infty-\mu_\omega)Q_{\omega,\parallel}\|_{L_x^2(\mathbb{R}^3)}\leq |\mu_\infty-\mu_\omega|\|Q_{\omega,\parallel}\|_{L_x^2(\mathbb{R}^3)}\lesssim\frac{1}{\omega}$. By the convergence $r_{\omega,\parallel}\to 0$ and the uniform bound (Lemma \ref{hig}), we have
$$\|Q_{\omega,\parallel}^3-Q_\infty^3-3Q_\infty^2r_{\omega,\parallel}\|_{L^2_z(\R)}=\|3Q_\infty r_{\omega,\parallel}^2+r_{\omega,\parallel}^3\|_{L^2_z(\R)}=o_\omega(1)\|r_{\omega,\parallel}\|_{H^1_z(\R)}.$$
Moreover, using $\frac{1}{2\pi}Q_{\omega,\parallel}^3=\langle (Q_{\omega,\parallel}(z)\Phi_0(y))^3, \Phi_0(y)\rangle_{L_y^2(\mathbb{R}^2)}=\langle (P_0Q_\omega(\cdot,z))^3, \Phi_0\rangle_{L_y^2(\mathbb{R}^2)}$ (see \eqref{L^4 norm of Phi_0}), we obtain
$$\begin{aligned}
\left\langle Q_\omega^3,\Phi_0\right\rangle_{L^2_y(\R^2)}-\frac{1}{2\pi}Q_{\omega,\parallel}^3&=\big\langle Q_\omega^3-(P_0Q_\omega)^3, \Phi_0\big\rangle_{L_y^2(\mathbb{R}^2}\\
&=\big\langle 3(P_0Q_\omega)^2(P_1Q_\omega)+3(P_0Q_\omega)(P_1Q_\omega)^2+(P_1Q_\omega)^3,\Phi_0\big\rangle_{L^2_y(\R^2)}.
\end{aligned}$$
Then, Lemma \ref{shp} with the uniform bound (Lemma \ref{hig}) yields 
$$\begin{aligned}
\left\|\langle Q_\omega(\cdot,z)^3,\Phi_0\rangle_{L^2_y(\R^2)}-\frac{1}{2\pi}Q_{\omega,\parallel}^3\right\|_{L^2_z(\R)}\lesssim\|P_1Q_\omega\|_{L^2_x(\R^3)}\lesssim\frac{1}{\omega}.
\end{aligned}$$
Putting it all together, we obtain $\|\mathcal{L}_\infty r_{\omega,\parallel}\|_{L_z^2(\mathbb{R})}\lesssim\frac{1}{\omega}$. Finally, by applying the non-degeneracy estimate for the linearized operator (Lemma \ref{non-degeneracy estimate}), we complete the proof.
\end{proof}
 
\begin{remark}[Weighted high Sobolev norm convergence]\label{high Sobolev norm bounds}
Let $k\in \mathbb{N}$. By interpolating the $L_x^2(\mathbb{R}^3)$ convergence in Theorem \ref{h1cv} and the bound in Lemma \ref{hig} using Lemma \ref{interpo}, we obtain the convergence in the high Sobolev norms:
$$\begin{aligned}
&\big\| (H_y  -\partial_z^2)^k(Q_\omega(x)-Q_\infty(z)\Phi_0(y))\big\|_{L^2_x(\R^3)}\\
&\le  \|Q_\omega(x)-Q_\infty(z)\Phi_0(y) \|_{L^2_x(\R^3)}^{1-\eta}\big\| (H_y  -\partial_z^2)^\frac{k}{\eta}(Q_\omega(x)-Q_\infty(z)\Phi_0(y))\big\|_{L^2_x(\R^3)}^\eta\\
&\leq C_{k,\eta}\omega^{-(1-\eta)}\to 0,
\end{aligned}$$
where $C_{k,\eta}>0$ is a constant depending on $k$ and $\eta$, and $\eta\in (0,1)$ satisfying $k \eta^{-1}\in \mathbb{N}$.
\end{remark}

\section{Linearized operator and uniqueness of a minimizer: Proof of Theorem \ref{uniqu}}\label{uniqsec}

In this section, we study the linearized operator at an energy minimizer of the variational problem $\mathcal{J}_\omega(m)$. Then, exploiting its coercivity, we establish the uniqueness of the minimizer, which is the main result of this study.

\subsection{Linearized operator}

Let $Q_\infty$ be the unique radially symmetric positive ground state of the 1d minimization problem (see \eqref{1d minimization}). It is well-known that $Q_\infty$ is \textit{non-degenerate} in the sense that the kernel of the 1d linearized operator (see \eqref{1d linearized operator}) acting on $L_z^2(\mathbb{R})$ with the domain $H_z^2(\mathbb{R})$ is completely characterized by the translation invariance of equation \eqref{1D EL equation}, that is, $\textup{Ker}(\mathcal{L}_\infty)=\textup{span}\{\partial_zQ_\infty\}$ (see \cite{K} and \cite[Proposition 2.8]{W}). Moreover, it is coercive in a restricted function space. Specifically, there exists $C_L>0$ such that   
\begin{equation}\label{1d linearized operator coercivity}
\langle\mathcal{L}_\infty \phi,  \phi\rangle_{L_z^2(\mathbb{R})}\geq C_L \|\phi\|_{L_z^2(\mathbb{R})}^2
\end{equation}
for all radially symmetric $\phi\in L_z^2(\mathbb{R})$, such that $\langle \phi,Q_\infty\rangle_{L_z^2(\mathbb{R})}=0$ (see \cite[Propositions 2.7 and 2.8]{W}). 

For a sufficiently large $\omega\geq 1$, let $Q_\omega$ be an energy minimizer of the variational problem $\mathcal{J}_\omega(m)$ obtained in Theorem \ref{th1} and consider the 3d linearized operator 
\begin{equation}\label{linearized operator}
\mathcal{L}_\omega=\omega(H_y-2)-\partial_z^2+\mu_\omega-3Q_\omega^2
\end{equation}
acting on $L_x^2(\mathbb{R}^3)$ with the domain $\{u\in H^2_x(\R^3): |y|^2 u\in L^2_x(\R^3)\}$. From the dimension reduction limit (Theorem \ref{h1cv}), it is expected that the operator $\mathcal{L}_\omega$ satisfies properties similar to those of the 1d operator $\mathcal{L}_\infty$.

First, we show that the 3d linearized operator has analogous coercivity.

\begin{proposition}[Coercivity of the linearized operator $\mathcal{L}_\omega$]\label{coe}
Let $C_L>0$ be the constant given in \eqref{1d linearized operator coercivity}. Then, for a sufficiently large $\omega\geq 1$, we have
\begin{align*}
&\left\langle \mathcal{L}_\omega\varphi  ,\varphi \right\rangle_{L_x^2(\R^3)}  \ge \frac{C_L}{4}\|P_0\varphi\|_{H^1_x(\R^3)}^2 +\frac{\omega(\Lambda_1-2)}{2}\|P_1 \varphi\|_{L^2_x(\R^3)}^2
\end{align*}
for all $\varphi\in \Sigma$ such that $\varphi(x)=\varphi(|y|,|z|)$ and $\langle Q_\omega, \varphi\rangle_{L^2_x(\mathbb{R}^3)}=0$.
\end{proposition}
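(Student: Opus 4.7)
The plan is to decompose $\varphi = P_0\varphi + P_1\varphi$ and control the two parts separately: for $P_1\varphi$ I will use the spectral gap of the Hermite operator $H_y$, and for $P_0\varphi$ I will quantitatively transfer the one-dimensional coercivity \eqref{1d linearized operator coercivity} of $\mathcal{L}_\infty$ by means of the dimension-reduction limit established in Theorem \ref{h1cv} and Lemma \ref{shp}. Since $\omega(H_y-2)$, $-\partial_z^2$, and the constant $\mu_\omega$ all commute with $P_0$ and $P_1$, the only cross terms come from the multiplicative $-3Q_\omega^2$, and so I will write
$$\langle \mathcal{L}_\omega\varphi,\varphi\rangle = A + B + C,$$
with $A := \langle \mathcal{L}_\omega P_0\varphi,P_0\varphi\rangle$, $B := \langle \mathcal{L}_\omega P_1\varphi, P_1\varphi\rangle$, and $C := -6\,\mathrm{Re}\int_{\mathbb{R}^3}Q_\omega^2 P_0\varphi\,\overline{P_1\varphi}\,dx$.

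The estimate on $B$ is immediate: dropping the nonnegative $-\partial_z^2+\mu_\omega$ contribution and using the uniform $L^\infty_x$ bound on $Q_\omega$ from Lemma \ref{hig} together with $\Lambda_1 > 2$, I obtain $B \geq [\omega(\Lambda_1-2) - 3\|Q_\omega\|_{L^\infty}^2]\|P_1\varphi\|^2 \geq \tfrac{3}{4}\omega(\Lambda_1-2)\|P_1\varphi\|^2$ for $\omega$ large. For $C$, Cauchy--Schwarz and Young's inequality give $|C| \leq \tfrac14\omega(\Lambda_1-2)\|P_1\varphi\|^2 + C\omega^{-1}\|P_0\varphi\|^2$, so that $B+C \geq \tfrac12\omega(\Lambda_1-2)\|P_1\varphi\|^2 - C\omega^{-1}\|P_0\varphi\|^2$; the small $O(\omega^{-1})\|P_0\varphi\|^2$ loss will later be absorbed into the coercivity of $A$.

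For $A$, writing $P_0\varphi(x) = \phi(z)\Phi_0(y)$ with $\phi(z) := \langle\varphi(\cdot,z),\Phi_0\rangle_{L^2_y}$ produces
$$A = \|\partial_z\phi\|_{L^2_z}^2 + \mu_\omega\|\phi\|_{L^2_z}^2 - 3\int_{\mathbb{R}^3}Q_\omega^2 |\phi|^2 \Phi_0^2\,dx.$$
Replacing $Q_\omega$ by $Q_\infty\Phi_0$ and $\mu_\omega$ by $\mu_\infty$ recovers $\langle\mathcal{L}_\infty\phi,\phi\rangle_{L^2_z}$; the discrepancy is controlled by $|\mu_\omega-\mu_\infty| = O(\omega^{-1})$ from Theorem \ref{h1cv} and by $\|Q_\omega - Q_\infty\Phi_0\|_{L^\infty_x} = o(1)$, which follows from Remark \ref{high Sobolev norm bounds} via Sobolev embedding, so $A = \langle\mathcal{L}_\infty\phi,\phi\rangle + o(1)\|\phi\|^2$. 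To apply \eqref{1d linearized operator coercivity} I must promote $\phi$ to a function orthogonal to $Q_\infty$; expanding $\langle Q_\omega,\varphi\rangle = 0$ as $\langle Q_{\omega,\parallel},\phi\rangle = -\langle P_1Q_\omega, P_1\varphi\rangle$ and invoking $\|Q_{\omega,\parallel}-Q_\infty\|_{L^2_z} = O(\omega^{-1})$ (Theorem \ref{h1cv}) together with $\|P_1Q_\omega\|_{L^2_x} = O(\omega^{-1})$ (Lemma \ref{shp}) gives $|\langle Q_\infty,\phi\rangle| \lesssim \omega^{-1}(\|\phi\|+\|P_1\varphi\|)$. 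Setting $\phi = \alpha Q_\infty + \phi^\perp$ with $\phi^\perp \perp Q_\infty$ (still even in $z$) and $|\alpha| = O(\omega^{-1})(\|\phi\|+\|P_1\varphi\|)$, applying \eqref{1d linearized operator coercivity} to $\phi^\perp$ and using $\mathcal{L}_\infty Q_\infty = -\tfrac{1}{\pi}Q_\infty^3$ to handle the $\alpha$-contributions, I arrive at $\langle\mathcal{L}_\infty\phi,\phi\rangle \geq \tfrac{C_L}{2}\|\phi\|^2 - C\omega^{-2}\|P_1\varphi\|^2$. A standard bootstrap combining this $L^2$ bound with the trivial inequality $\|\partial_z\phi\|^2 \leq \langle\mathcal{L}_\infty\phi,\phi\rangle + C\|\phi\|^2$ upgrades it to $H^1_z$ coercivity in $\phi$, which is equivalent to $\|P_0\varphi\|_{H^1_x}^2$.

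Combining the three pieces and absorbing the various $O(\omega^{-1})$ and $O(\omega^{-2})$ error terms into the leading coefficients yields the claimed bound with constants $C_L/4$ and $\omega(\Lambda_1-2)/2$ for $\omega$ sufficiently large. The main technical obstacle is the orthogonality transfer: the 1d coercivity is only available on the hyperplane $\{\phi \perp Q_\infty\}$, while we are only given $\varphi \perp Q_\omega$ in 3d; the quantitative dimension-reduction rates from Theorem \ref{h1cv} and Lemma \ref{shp} are precisely what makes this transfer possible with a negligible $\omega^{-2}$ loss, which is then easily swallowed by the large gap factor $\omega(\Lambda_1-2)$ in the $P_1$ estimate.
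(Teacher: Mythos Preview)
Your proof is correct and follows essentially the same approach as the paper: decompose $\varphi = P_0\varphi + P_1\varphi$, use the spectral gap of $H_y$ for the $P_1$ part, reduce the $P_0$ part to the 1d operator $\mathcal{L}_\infty$ via the $L^\infty_x$ convergence $Q_\omega\to Q_\infty\Phi_0$ and $\mu_\omega\to\mu_\infty$, and transfer the orthogonality $\varphi\perp Q_\omega$ to approximate orthogonality $\phi\perp Q_\infty$ using the quantitative rates from Theorem~\ref{h1cv} and Lemma~\ref{shp}. The only minor differences are cosmetic: the paper packages the limit comparison through an auxiliary operator $\tilde{\mathcal{L}}_\infty=-\partial_z^2+\mu_\infty-3(Q_\infty\Phi_0)^2$ and uses $o_\omega(1)$ throughout, whereas you track explicit $O(\omega^{-1})$ rates; and you spell out the $L^2\to H^1$ upgrade via interpolation with the trivial bound $\|\partial_z\phi\|^2\le\langle\mathcal{L}_\infty\phi,\phi\rangle+C\|\phi\|^2$, which the paper leaves implicit. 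One small imprecision: the $\alpha$-cross term actually contributes $O(\omega^{-1})\|P_1\varphi\|^2$ rather than $O(\omega^{-2})$, but this is harmless since it is still swallowed by the $\tfrac{\omega(\Lambda_1-2)}{2}\|P_1\varphi\|^2$ gap.
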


\begin{proof}
Suppose that $\varphi$ satisfies the assumptions of the proposition. Then, because $\mu_\omega\rightarrow \mu_\infty$, $Q_\omega\rightarrow Q_\infty(z) \Phi_0(y)$ in $L^\infty_x(\R^3)$ and $\langle (H_y-2)\varphi,\varphi \rangle_{L_x^2(\R^3)}\ge (\Lambda_1-2)\|P_1\varphi\|_{L_x^2(\R^3)}^2$, it suffices to show that
$$
\langle \tilde{\mathcal{L}}_\infty \varphi  ,\varphi\rangle_{L_x^2(\R^3)}  \ge \frac{C_L}{3}\|P_0\varphi\|_{H^1_x(\R^3)}^2 -\frac{\omega(\Lambda_1-2)}{2}\|P_1 \varphi\|_{L^2_x(\R^3)}^2,
$$
where
$$\tilde{\mathcal{L}}_\infty= -\partial_z^2+\mu_\infty-3(Q_\infty(z) \Phi_0(y))^2$$
is an auxiliary 3d linear operator on $L_x^2(\mathbb{R}^3)$. 

By decomposing $\varphi= P_0\varphi+P_1\varphi$, where $P_0\varphi=\varphi_\parallel(z)\Phi_0(y)$ and $\varphi_\parallel(z)=\langle \varphi(\cdot,z),\Phi_0\rangle_{L_y^2(\mathbb{R}^2)}$, we write
\begin{align*}
\big\langle \tilde{\mathcal{L}}_\infty \varphi  ,\varphi\big\rangle_{L_x^2(\R^3)}&\ge \big\langle \tilde{\mathcal{L}}_\infty (\varphi_\parallel(z)\Phi_0(y)), \varphi_\parallel(z)\Phi_0(y)\big\rangle_{L_x^2(\R^3)}-6\big\langle  \left(Q_\infty  \Phi_0 \right)^2 P_0\varphi  ,P_1\varphi \big\rangle_{L_x^2(\R^3)} \\
&\quad   -3 \big\langle  \left(Q_\infty  \Phi_0 \right)^2  P_1\varphi  ,P_1\varphi \big\rangle_{L_x^2(\R^3)},
\end{align*}
where the cross term $\langle (-\partial_z^2+\mu_\infty)P_0\varphi,P_1\varphi\rangle_{L_x^2(\R^3)}$ is canceled, and the non-negative term $\langle (-\partial_z^2+\mu_\infty)P_1\varphi,P_1\varphi\rangle_{L_x^2(\R^3)}$ is dropped. Then, by the H\"older and the Cauchy-Schwarz inequalities, we can show that 
\begin{align*}
&\Big|6\big\langle  \left(Q_\infty  \Phi_0 \right)^2  P_0\varphi  ,P_1\varphi \big\rangle_{L_x^2(\R^3)}+3\big\langle  \left(Q_\infty  \Phi_0 \right)^2 P_1\varphi  ,P_1\varphi \big\rangle_{L_x^2(\R^3)}\Big|\\
&\le o_\omega(1)\|P_0\varphi\|_{L^2_x(\R^3)}^2+\frac{(\Lambda_1-2)\omega}{4}\|P_1\varphi\|_{L^2_x(\R^3)}^2
\end{align*}
because $Q_\infty$ is bounded. We also note that integrating out the $y$-variable 
$$\big\langle \tilde{\mathcal{L}}_\infty (\varphi_\parallel(z)\Phi_0(y)), \varphi_\parallel(z)\Phi_0(y)\big\rangle_{L_x^2(\R^3)}=\langle \mathcal{L}_\infty \varphi_\parallel,\varphi_\parallel\rangle_{L^2_z(\R)},$$
where $\mathcal{L}_\infty$ is the 1d linearized operator (see \eqref{1d linearized operator}). Therefore, the proof of the proposition can be further reduced to show the lower bound:
\begin{equation}\label{reduced coercivity estimate}
\langle \mathcal{L}_\infty \varphi_\parallel,\varphi_\parallel\rangle_{L^2_z(\R)}\ge  \frac{C_L}{2}  \|\varphi_\parallel\|_{H^1_z(\R)}^2 -\|P_1 \varphi\|_{L^2_x(\R^3)}^2.
\end{equation}

We show \eqref{reduced coercivity estimate} using the coercivity \eqref{1d linearized operator coercivity} of the 1d linearized operator $\mathcal{L}_\infty$. To do so, we denote the component of $\varphi_\parallel$ orthogonal to $Q_\infty$ by
$$\tilde{\varphi}_\parallel=\varphi_\parallel-\frac{\langle \varphi_\parallel, Q_\infty\rangle_{L^2_z(\R)}}{\|Q_\infty\|_{L^2_z(\R)}^2} Q_\infty.$$
We observe that under the orthogonality condition $\langle  \varphi,Q_\omega \rangle_{L^2_x(\R^3)}=0$ and the convergence $Q_\omega\rightarrow Q_\infty(z)\Phi_0(y)$ in $H^1_x(\R^3)$, $\varphi_\parallel$ is almost orthogonal to $Q_\infty$ as follows:
\begin{align*}
\langle \varphi_\parallel,Q_\infty\rangle_{L^2_z(\R)}&=\big\langle \varphi_\parallel(z)\Phi_0(y),Q_\infty(z)\Phi_0(y)\big\rangle_{L^2_x(\R^3)}=\big\langle \varphi,Q_\infty(z)\Phi_0(y)\big\rangle_{L^2_x(\R^3)}\\
&=\langle  \varphi,Q_\omega \rangle_{L^2_x(\R^3)}+\big\langle  \varphi,Q_\infty(z)\Phi_0(y)-Q_\omega \big\rangle_{L^2_x(\R^3)}\\
&=o_\omega(1)\|\varphi\|_{L^2_x(\R^3)}.
\end{align*}
Consequently, $\varphi_\parallel=\tilde{\varphi}_\parallel+o_\omega(1)\|\varphi\|_{L^2_x(\R^3)} Q_\infty$. Hence, by the H\"older and the Cauchy-Schwarz inequalities, the core part is extracted as
\begin{align*}
\langle \mathcal{L}_\infty  \varphi_\parallel, \varphi_\parallel\rangle_{L^2_z(\R)}&=\langle \mathcal{L}_\infty  \tilde{\varphi}_\parallel, \tilde{\varphi}_\parallel\rangle_{L^2_z(\R)}+o_\omega(1)\|\varphi\|_{L^2_x(\R^3)}  \langle\mathcal{L}_\infty  Q_\infty ,\tilde{\varphi}_\parallel  \rangle_{L^2_z(\R)}\\
&\quad +o_\omega(1)\|\varphi\|_{L^2_x(\R^3)}^2 \langle \mathcal{L}_\infty  Q_\infty, Q_\infty\rangle_{L^2_z(\R)}\\
&\ge  \langle \mathcal{L}_\infty  \tilde{\varphi}_\parallel, \tilde{\varphi}_\parallel\rangle_{L^2_z(\R)}+o_\omega(1)\|\varphi\|_{L^2_x(\R^3)}^2.
\end{align*}
Then, it follows from the coercivity of $\mathcal{L}_\infty$ (see \eqref{1d linearized operator coercivity}) that
$$\langle \mathcal{L}_\infty  \varphi_\parallel, \varphi_\parallel\rangle_{L^2_z(\R)}\ge C_L\|\tilde{\varphi}_\parallel\|_{H^1_z(\R)}^2+o_\omega(1)\| \varphi\|_{L^2_x(\R^3)}^2.$$
Thus, using $\|\tilde{\varphi}_\parallel\|_{H^1_z(\R)}^2\ge \|\varphi_\parallel\|_{H^1_z(\R)}^2+o_\omega(1)\|\varphi\|_{L^2_x(\R^3)}^2$, we prove \eqref{reduced coercivity estimate}.
\end{proof}

Next, we show the non-degeneracy of the energy minimizer $Q_\omega$. The non-degeneracy will not be used to prove the uniqueness of the minimizer; however, it is included for interest and potential future applications.

\begin{proposition}[Non-degeneracy of minimizers for large $\omega$]
For sufficiently large $\omega>0$, the operator $\mathcal{L}_\omega$ is non-degenerate; that is, its kernel is given by
$$
\ker \mathcal{L}_\omega=\textup{span}\{\partial_z Q_\omega\}.
$$
\end{proposition}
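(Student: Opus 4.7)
The plan is to transfer the 1d non-degeneracy $\ker\mathcal{L}_\infty=\textup{span}\{\partial_z Q_\infty\}$ (the content of Lemma \ref{non-degeneracy estimate}, together with its counterpart on odd-in-$z$ functions orthogonal to $\partial_z Q_\infty$) up to $\mathcal{L}_\omega$ via the dimension reduction limit (Theorem \ref{h1cv}). The inclusion $\partial_z Q_\omega\in\ker\mathcal{L}_\omega$ is immediate: differentiating \eqref{3D EL equation} in $z$ yields $\mathcal{L}_\omega(\partial_z Q_\omega)=0$. All content lies in the reverse inclusion.

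For the reverse inclusion I argue by contradiction: if it fails, then along a sequence $\omega_n\to\infty$ there exists $\psi_n\in\ker\mathcal{L}_{\omega_n}$ with $\|\psi_n\|_{L_x^2(\R^3)}=1$ that is linearly independent from $\partial_z Q_{\omega_n}$. Since $Q_\omega$ is even in $z$, $\mathcal{L}_{\omega_n}$ commutes with $z\mapsto -z$, so $\ker\mathcal{L}_{\omega_n}$ splits into an even and an odd sector in $z$. I may assume $\psi_n$ lies entirely in one of them; in the odd case I additionally impose $\langle\psi_n,\partial_z Q_{\omega_n}\rangle_{L_x^2(\R^3)}=0$. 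The first quantitative step is a dimension-reduction bound on $\psi_n$: applying $P_1$ to $\mathcal{L}_{\omega_n}\psi_n=0$ and pairing with $P_1\psi_n$ in $L_x^2$, the spectral gap $\Lambda_1-2>0$, the nonnegativity $\mu_{\omega_n}\ge 0$ from \eqref{mmu}, and the uniform bound $\|Q_{\omega_n}\|_{L_x^\infty}\lesssim 1$ from Lemma \ref{hig} yield $\|P_1\psi_n\|_{L_x^2(\R^3)}\lesssim \omega_n^{-1}$.

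The second step extracts the 1d limiting equation for the parallel component $\psi_{n,\parallel}(z)=\langle\psi_n(\cdot,z),\Phi_0\rangle_{L_y^2(\R^2)}$. Projecting $\mathcal{L}_{\omega_n}\psi_n=0$ onto $\Phi_0$ gives
$$(-\partial_z^2+\mu_{\omega_n})\psi_{n,\parallel}=3\bigl\langle Q_{\omega_n}^2\psi_n,\Phi_0\bigr\rangle_{L_y^2(\R^2)}.$$
Expanding $Q_{\omega_n}=Q_{\omega_n,\parallel}\Phi_0+P_1Q_{\omega_n}$ and $\psi_n=\psi_{n,\parallel}\Phi_0+P_1\psi_n$ and using \eqref{L^4 norm of Phi_0}, the only non-error contribution on the right is $\frac{3}{2\pi}Q_{\omega_n,\parallel}^2\psi_{n,\parallel}$; every other term carries at least one factor of $P_1Q_{\omega_n}$ or $P_1\psi_n$ and is $o_n(1)$ in $L_z^2(\R)$ by Lemma \ref{shp}, combined with the uniform $L_x^\infty$-bound on $Q_{\omega_n}$ from Lemma \ref{hig} and the bound above. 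Using $Q_{\omega_n,\parallel}\to Q_\infty$ in $H_z^1(\R)$ and $\mu_{\omega_n}\to\mu_\infty$ (Theorem \ref{h1cv}), this rewrites as
$$\mathcal{L}_\infty\psi_{n,\parallel}=o_n(1)\quad\textup{in }L_z^2(\R).$$

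The contradiction then closes by coercivity of $\mathcal{L}_\infty$ on the appropriate sector. In the even-in-$z$ case, Lemma \ref{non-degeneracy estimate} gives $\|\psi_{n,\parallel}\|_{H_z^1(\R)}\lesssim\|\mathcal{L}_\infty\psi_{n,\parallel}\|_{H_z^{-1}(\R)}=o_n(1)$, which with $\|P_1\psi_n\|\to 0$ contradicts $\|\psi_n\|_{L_x^2}=1$. In the odd-in-$z$ case, the classical 1d spectral fact that $\mathcal{L}_\infty$ on odd functions has one-dimensional kernel $\textup{span}\{\partial_z Q_\infty\}$ and is coercive on its orthogonal complement (provable verbatim as in Lemma \ref{non-degeneracy estimate}) gives $\psi_{n,\parallel}=a_n\partial_z Q_\infty+o_n(1)$ in $L_z^2(\R)$ for some bounded sequence $a_n$. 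Passing to the limit in $\langle\psi_n,\partial_z Q_{\omega_n}\rangle_{L_x^2(\R^3)}=0$ using $\|P_1\psi_n\|\to 0$ and $\partial_z Q_{\omega_n}\to\partial_z Q_\infty(z)\Phi_0(y)$ in $L_x^2(\R^3)$ from Theorem \ref{h1cv} forces $a_n\|\partial_z Q_\infty\|_{L_z^2}^2\to 0$, so $\|\psi_{n,\parallel}\|_{L_z^2}\to 0$, again contradicting $\|\psi_n\|_{L_x^2}=1$. The main technical point I expect is carefully verifying that the error terms in the $\Phi_0$-projection are truly $o_n(1)$ in $L_z^2(\R)$; this is routine given the quantitative bounds of Lemma \ref{shp} and the uniform $L_x^\infty$-bound of Lemma \ref{hig}, but requires care in tracking the various cross terms.
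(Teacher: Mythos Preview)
Your argument is correct and takes a genuinely different route from the paper's proof. The paper does not split by $z$-parity; instead it works with a single null vector $\varphi_\omega\perp\partial_z Q_\omega$, first derives the energy identity $\omega\|\varphi_\omega\|_{\dot\Sigma_y}^2+\|\partial_z\varphi_\omega\|^2+\mu_\omega=3\int Q_\omega^2|\varphi_\omega|^2$, and then uses a localization argument on the slabs $T_k=\R^2\times[k,k+1)$ together with Sobolev embedding to show $\sup_k\|\varphi_\omega\|_{L^2(T_k)}\gtrsim 1$. After a $z$-translation this produces a nontrivial weak limit $\varphi_\infty$ in $\Sigma$, and testing $\mathcal{L}_\omega\varphi_\omega=0$ against $g(z)\Phi_0(y)$ yields $\mathcal{L}_\infty\varphi_{\infty,\parallel}=0$ with $\varphi_{\infty,\parallel}\perp\partial_z Q_\infty$, contradicting the 1d non-degeneracy.

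Your approach replaces this weak-compactness machinery by the quantitative coercivity of Lemma \ref{non-degeneracy estimate} (and its odd-sector analogue), which lets you conclude $\|\psi_{n,\parallel}\|_{H^1_z}=o_n(1)$ directly from $\|\mathcal{L}_\infty\psi_{n,\parallel}\|_{L^2_z}=o_n(1)$ without extracting any limit or translating. This is cleaner and in fact yields a sharper bound $\|P_1\psi_n\|_{L^2}\lesssim\omega_n^{-1}$ than the paper's $\omega_n^{-1/2}$. The price is that you must invoke the odd-sector coercivity separately, whereas the paper's compactness argument handles both parities at once. One small point to make explicit when you write it out: for the cross terms carrying a factor of $P_1Q_{\omega_n}$ but only $P_0\psi_n$, the cited Lemma \ref{shp} alone (an $L^2$ bound on $P_1Q_{\omega_n}$) is not enough, since you have no a priori $L^\infty$ control on $\psi_n$; you should either invoke the pointwise smallness of $P_1Q_{\omega_n}$ from Remark \ref{high Sobolev norm bounds}, or first record the uniform bound $\|\psi_{n,\parallel}\|_{H^1_z}\lesssim 1$ obtained by pairing the projected equation with $\psi_{n,\parallel}$.
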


\begin{proof}
For contradiction, we assume that for some large $\omega>0$, there exists $\varphi_\omega\in \{u\in H^2_x(\R^3): |y|^2 u\in L^2_x(\R^3)\}$ such that $\|\varphi_\omega\|_{L^2_x(\R^3)}=1, \ \langle \partial_z Q_\omega$, $ \varphi_\omega\rangle_{L^2_x(\R^3)}=0$, and $\mathcal{L}_\omega \varphi_\omega=0$. Then, we have
$$\begin{aligned}
3\int_{\mathbb{R}^3}Q_\omega^2|\varphi_\omega|^2dx&=\langle\mathcal{L}_\omega \varphi_\omega, \varphi_\omega\rangle_{L_x^2(\mathbb{R}^3)}+3\langle Q_\omega^2\varphi_\omega, \varphi_\omega\rangle_{L^2_x(\R^3)}\\
&=\omega \|\varphi_\omega\|_{\dot{\Sigma}_y}^2+\|\partial_z \varphi_\omega\|_{L^2_x(\R^3)}^2+\mu_\omega \|\varphi_\omega\|_{L^2_x(\R^3)}^2\\
&\geq \mu_\omega=\mu_\infty+o_\omega(1),
\end{aligned}$$
while, by the Sobolev inequality with Lemma \ref{hig}, 
$$\begin{aligned}
\int_{\mathbb{R}^3}Q_\omega^2|\varphi_\omega|^2dx&=\sum_{k=-\infty}^\infty\int_{T_k}Q_\omega^2|\varphi_\omega|^2dx \le \sum_{k=-\infty}^\infty\|Q_\omega\|_{L^\infty_x(T_k)}^2\|\varphi_\omega\|_{L^2_x(T_k)}^2\\
&\lesssim\sum_{k=-\infty}^\infty\|Q_\omega\|_{H^2_x(T_k)}^2\|\varphi_\omega\|_{L^2_x(T_k)}^2\leq \sup_k\|\varphi_\omega\|_{L^2_x(T_k)}^2\cdot\sum_{k=-\infty}^\infty\|Q_\omega\|_{H^2_x(T_k)}^2\\
&=\sup_k\|\varphi_\omega\|_{L^2_x(T_k)}^2\cdot\|Q_\omega\|_{H^2_x(\mathbb{R}^3)}^2\lesssim \sup_k\|\varphi_\omega\|_{L^2_x(T_k)}^2,
\end{aligned}$$
where $T_k=\R^2\times [k,k+1), k\in \Z$ and the implicit constants are independent of $\omega$. Hence, by combining these two inequalities, we obtain 
\begin{equation}\label{key estimate for non-degeneracy}
\frac{\mu_\infty}{2}\leq \omega \|\varphi_\omega\|_{\dot{\Sigma}_y}^2+\|\partial_z \varphi_\omega\|_{L^2_x(\R^3)}^2+\mu_\omega \|\varphi_\omega\|_{L^2_x(\R^3)}^2\lesssim \sup_k\|\varphi_\omega\|_{L^2_x(T_k)}^2\leq 1.
\end{equation}
Thus, by translating $\varphi_\omega(y,z-k_\omega)$ using suitable $k_\omega\in\mathbb{Z}$ if necessary, but still denoting by $\varphi_\omega$, we may assume that 
\begin{equation}\label{ww1}
\liminf_{\omega\rightarrow \infty}\|\varphi_\omega\|_{L^2_x(\R^2\times [0,1))}^2\geq\frac{\mu_\infty}{2}.
\end{equation}
On the other hand, by \eqref{key estimate for non-degeneracy}, the sequence $\{\varphi_\omega\}_\omega$ is bounded in $\Sigma$ and thus passing to a subsequence, $\varphi_\omega \rightharpoonup \varphi_\infty\neq0$ in $\Sigma$ as $\omega\to\infty$. Moreover, by \eqref{key estimate for non-degeneracy}, $\|P_1\varphi_\omega\|_{L^2_x(\R^3)}^2\lesssim \|P_1\varphi_\omega\|_{\dot{\Sigma}_y}^2=\|\varphi_\omega\|_{\dot{\Sigma}_y}^2\lesssim \frac{1}{\omega}\to 0$, and consequently, by \eqref{ww1},
$$\varphi_{\omega,\parallel}(z)=\int_{\R^2}\varphi_\omega(y,z) \Phi_0(y)dy\rightharpoonup\varphi_{\infty,\parallel}(z)=\int_{\R^2}\varphi_\infty(y,z) \Phi_0(y)dy\neq 0$$
in $L_z^2(\mathbb{R})$. Therefore, by collecting and using the dimension reduction limit (Theorem \ref{h1cv}), we get that for any $g\in C_c^\infty(\mathbb{R})$ 
$$\begin{aligned}
0&=\langle\mathcal{L}_\omega\varphi_\omega, g(z)\Phi_0(y)\rangle_{L_x^2(\mathbb{R}^3)}\\
&=\big\langle (-\partial_z^2+\mu_\infty-3(Q_\infty(z) \Phi_0(y))^2)\varphi_{\omega,\parallel}(z)\Phi_0(y),g(z)\Phi_0(y) \big\rangle_{L_x^2(\mathbb{R}^3)}+o_\omega(1)\\
&=\langle\mathcal{L}_\infty \varphi_{\infty,\parallel}, g\rangle_{L_z^2(\mathbb{R})}+o_\omega(1),
\end{aligned}$$
in other words, $\varphi_{\infty,\parallel}\in \textup{Ker}(\mathcal{L}_\infty)$. However, since 
$$\begin{aligned}
0=\langle\varphi_\omega, \partial_z Q_\omega \rangle_{L_x^2(\mathbb{R}^3)}&=\langle\varphi_{\infty,\parallel}(z)\Phi_0(y), \partial_zQ_\infty(z)\Phi_0(y) \rangle_{L_x^2(\mathbb{R}^3)}+o_\omega(1)\\
&=\langle\varphi_{\infty,\parallel}, \partial_zQ_\infty\rangle_{L_z^2(\mathbb{R})}+o_\omega(1),
\end{aligned}$$
this contradicts the non-degeneracy of the minimizer $Q_\infty$.
\end{proof}

\subsection{Proof of Theorem \ref{uniqu}}
We now prove the uniqueness of the minimizer using the coercivity of its linearized operator (Proposition \ref{coe}). For contradiction, we assume that the variational problem $\mathcal{J}_\omega(m)$ has two different minimizers $Q_\omega(x)=Q_\omega(|y|,|z|)$ and $\tilde{Q}_\omega(x)=\tilde{Q}_\omega(|y|,|z|)$. We introduce the functional
$$
I_\omega(u)= E_\omega(u)+\frac{\mu_\omega}{2}M(u).
$$
Then, it is obvious that $I_\omega(Q_\omega)=I_\omega(\tilde{Q}_\omega)$.
We decompose
$$
\tilde{Q}_\omega=\sqrt{1-\delta_\omega^2}Q_\omega+R_\omega,
$$
where $\langle Q_\omega, R_\omega \rangle_{L_x^2(\mathbb{R}^3)}=0$. 
Then, since $m=\| \tilde{Q}_\omega\|_{L_x^2(\R^3)}^2=(1-\delta_\omega^2)m+\|R_\omega\|_{L_x^2(\R^3)}^2$ and $\|Q_\omega-\tilde{Q}_\omega\|_{L_x^2(\R^3)}^2=(1-\sqrt{1-\delta_\omega^2})^2m+\|R_\omega\|_{L_x^2(\R^3)}^2=o_\omega(1)$, we have
\begin{equation}\label{epsi0}
\delta_\omega=\frac{1}{\sqrt{m}}\|R_\omega\|_{L_x^2(\R^3)}=o_\omega(1).
\end{equation}
By inserting $\tilde{Q}_\omega=\sqrt{1-\delta_\omega^2}Q_\omega+R_\omega$ into $I_\omega(\tilde{Q}_\omega)$, we reorganize the terms in the increasing order of $R_\omega$ as
\begin{align*}
I_\omega(\tilde{Q}_\omega)&=\frac{\omega}{2}\big\|\sqrt{1-\delta_\omega^2}Q_\omega+R_\omega\big\|_{\dot{\Sigma}_y}^2+\frac{1}{2}\big\|\partial_z (\sqrt{1-\delta_\omega^2}Q_\omega+R_\omega)\big\|_{L_x^2(\mathbb{R}^3)}^2\\
&\quad -\frac{1}{4}\big\|\sqrt{1-\delta_\omega^2}Q_\omega+R_\omega\big\|_{L_x^4(\mathbb{R}^3)}^4+\frac{\mu_\omega}{2}\big\|\sqrt{1-\delta_\omega^2}Q_\omega+R_\omega\big\|_{L_x^2(\mathbb{R}^3)}^2 \\
&=\frac{1-\delta_\omega^2}{2}\Big\{\omega \left\|Q_\omega\right\|_{\dot{\Sigma}_y}^2+ \|\partial_z Q_\omega\|_{L_x^2(\mathbb{R}^3)}^2+\mu_\omega\|Q_\omega\|_{L_x^2(\mathbb{R}^3)}^2\Big\}-\frac{(1-\delta_\omega^2)^2}{4}\|Q_\omega\|_{L_x^4(\mathbb{R}^3)}^4\\
&\quad +\big\langle \sqrt{1-\delta_\omega^2}(\omega(H_y-2)-\partial_z^2+\mu_\omega)Q_\omega-(1-\delta_\omega^2)^\frac32Q_\omega^3 ,R_\omega \big\rangle_{L_x^2(\R^3)}\\
&\quad +\frac12\big\langle (\omega(H_y-2)-\partial_z^2+\mu_\omega)R_\omega-3(1-\delta_\omega^2)Q_\omega^2R_\omega ,R_\omega \big\rangle_{L_x^2(\R^3)}\\
&\quad -\sqrt{1-\delta_\omega^2}\langle Q_\omega , R_\omega^3\rangle_{L_x^2(\R^3)}- \frac14\|R_\omega\|_{L_x^4(\mathbb{R}^3)}^4.
\end{align*} 
For the zeroth and first-order terms, applying the equation \eqref{3D EL equation}, we write
\begin{align*}
&\frac{1-\delta_\omega^2}{2}\Big\{\omega \left\|Q_\omega\right\|_{\dot{\Sigma}_y}^2+ \|\partial_z Q_\omega\|_{L_x^2(\mathbb{R}^3)}^2+\mu_\omega\|Q_\omega\|_{L_x^2(\mathbb{R}^3)}^2\Big\}-\frac{(1-\delta_\omega^2)^2}{4}\|Q_\omega\|_{L_x^4(\mathbb{R}^3)}^4\\
&=\frac{1}{2}\Big\{\omega \left\|Q_\omega\right\|_{\dot{\Sigma}_y}^2+ \|\partial_z Q_\omega\|_{L_x^2(\mathbb{R}^3)}^2+\mu_\omega\|Q_\omega\|_{L_x^2(\mathbb{R}^3)}^2\Big\}-\left\{\frac{\delta_\omega^2}{2}+\frac{(1-\delta_\omega^2)^2}{4}\right\}\|Q_\omega\|_{L_x^4(\mathbb{R}^3)}^4\\
&=I_\omega(Q_\omega)- \frac{\delta_\omega^4}{4} \|Q_\omega\|_{L_x^4(\mathbb{R}^3)}^4=I_\omega(Q_\omega)+o_\omega(1)\|R_\omega\|_{L_x^2(\mathbb{R}^3)}^2
\end{align*} 
and
\begin{align*}
&\big\langle \sqrt{1-\delta_\omega^2}(\omega(H_y-2)-\partial_z^2+\mu_\omega)Q_\omega-(1-\delta_\omega^2)^\frac32Q_\omega^3 ,R_\omega \big\rangle_{L_x^2(\R^3)}\\
&=\sqrt{1-\delta_\omega^2}\big\langle (\omega(H_y-2)-\partial_z^2+\mu_\omega)Q_\omega-Q_\omega^3 ,R_\omega \big\rangle_{L_x^2(\R^3)}\\
&\quad+\big(\sqrt{1-\delta_\omega^2}-(1-\delta_\omega^2)^\frac32\big)\langle Q_\omega^3  ,R_\omega \rangle_{L_x^2(\R^3)}\\
&=\delta_\omega^2\sqrt{1-\delta_\omega^2}\langle Q_\omega^3  ,R_\omega \rangle_{L_x^2(\R^3)}=o_\omega(1)\|R_\omega\|_{L_x^2(\mathbb{R}^3)}^2,
\end{align*} 
where \eqref{epsi0} is used in the last step in both cases. For the second-order terms, by extracting the lineaarized operator, we write
$$\begin{aligned}
&\big\langle (\omega(H_y-2)-\partial_z^2+\mu_\omega)R_\omega-3(1-\delta_\omega^2)Q_\omega^2R_\omega ,R_\omega \big\rangle_{L_x^2(\R^3)}\\
&=\langle \mathcal{L}_\omega R_\omega, R_\omega\rangle_{L_x^2(\mathbb{R}^3)}+3\delta_\omega^2 \langle Q_\omega^2R_\omega ,R_\omega\rangle_{L_x^2(\R^3)}\\
&=\langle \mathcal{L}_\omega R_\omega, R_\omega\rangle_{L_x^2(\mathbb{R}^3)}+o_\omega(1)\|R_\omega\|_{L_x^2(\mathbb{R}^3)}^2.
\end{aligned}$$
For the higher-order terms, we observe from Theorem \ref{h1cv} and the Sobolev embedding $L_x^\infty(\mathbb{R}^3)\hookrightarrow H_x^2(\mathbb{R}^3)$ that $\|R_\omega\|_{L_x^\infty(\mathbb{R}^3)}\to 0$. Hence, we have
$$\sqrt{1-\delta_\omega^2}|\langle Q_\omega, R_\omega^3\rangle_{L_x^2(\R^3)}|+\frac{1}{4}\|R_\omega\|_{L_x^4(\mathbb{R}^3)}^4=o_\omega(1)\|R_\omega\|_{L_x^2(\mathbb{R}^3)}^2.$$
Putting it all together, we obtain 
$$\begin{aligned}
I_\omega(\tilde{Q}_\omega)&=I_\omega(Q_\omega) +\frac12\langle \mathcal{L}_\omega R_\omega, R_\omega\rangle_{L_x^2(\mathbb{R}^3)}+o_\omega(1)\|R_\omega\|_{L_x^2(\mathbb{R}^3)}^2.
\end{aligned}$$
Then, Proposition \ref{coe} yields a contradiction.

\appendix

\section{Dimension reduction for the Cauchy problem}\label{dimcau}

We establish the convergence of general 3d solutions in Proposition \ref{prop: global existence} as the confinement is strengthened, which corresponds to the downward arrow on the right-hand side of Figure 1. While this dimension reduction is not used to prove the main result, it is included because it might be of its own interest.

\begin{theorem}[Dimension reduction from the 3d to the 1d NLS]\label{thm: dimension reduction for NLS}
For $\omega \ge (C_{GN})^4m^2$, where $C_{GN}$ is given in Lemma \ref{GN type inequality}, we assume that $u_{\omega,0}\in \Sigma$, $M(u_{\omega,0})=m$,
$$E_\omega(u_{\omega,0})<0\textup{ and }\|u_{\omega,0}\|_{\dot{\Sigma}_y}^2\leq \sqrt{\omega}.$$
Let $u_\omega(t)$ be the global solution to the 3d NLS \eqref{NLS} with initial data $u_{\omega,0}$, and let $v_\omega(t)$ be the global solution to the 1d NLS \eqref{1dNLS} with initial data $u_{\omega,\parallel}(0)=\langle u_{\omega,0},\Phi_0\rangle_{L_y^2(\mathbb{R}^2)}$. Then, there exists constants $C_1, C_2>0$, independent of $\omega$, such that 
$$\|u_\omega(t,x)-v_\omega(t,z)\Phi_0(y)\|_{L_x^2(\mathbb{R}^3)}\leq\frac{C_1}{\sqrt{\omega}}e^{C_2t}.$$
\end{theorem}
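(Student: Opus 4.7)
The plan is to decompose along the Hermite basis, project \eqref{NLS} onto $\Phi_0$, and run a Gr\"onwall-type $L^2_z$ energy estimate on the difference against the 1d flow \eqref{1dNLS}. Write $u_\omega=u_{\omega,\parallel}(t,z)\Phi_0(y)+P_1u_\omega$. Since $v_\omega\Phi_0=P_0(v_\omega\Phi_0)$, the full $L^2_x$ error splits orthogonally as
$$\|u_\omega-v_\omega\Phi_0\|_{L^2_x(\R^3)}^2=\|u_{\omega,\parallel}-v_\omega\|_{L^2_z(\R)}^2+\|P_1u_\omega\|_{L^2_x(\R^3)}^2.$$
By Proposition \ref{prop: global existence} and the spectral gap $\Lambda_1>2$, the second summand is uniformly $O(\omega^{-1})$, since $\|P_1u_\omega\|_{L^2_x}^2\ls\|u_\omega\|_{\dot\Sigma_y}^2\ls\omega^{-1}$. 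Hence the theorem reduces to controlling $w:=u_{\omega,\parallel}-v_\omega$ in $L^2_z$. Testing \eqref{NLS} against $\Phi_0$, using $(H_y-2)\Phi_0=0$, and subtracting \eqref{1dNLS} together with the identity $\frac{1}{2\pi}|u_{\omega,\parallel}|^2u_{\omega,\parallel}=\langle|P_0u_\omega|^2P_0u_\omega,\Phi_0\rangle_{L^2_y(\R^2)}$ (immediate from \eqref{L^4 norm of Phi_0}), I obtain
$$i\pa_tw=-\pa_z^2w-\tfrac{1}{2\pi}\bigl(|u_{\omega,\parallel}|^2u_{\omega,\parallel}-|v_\omega|^2v_\omega\bigr)-\mathcal{R}_\omega,\qquad w(0)=0,$$
with $\mathcal{R}_\omega:=\langle|u_\omega|^2u_\omega-|P_0u_\omega|^2P_0u_\omega,\Phi_0\rangle_{L^2_y}$ driven entirely by $b:=P_1u_\omega$.

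The standard $L^2_z$ computation (the Laplacian term integrates out, the cubic difference bounded pointwise by $(|u_{\omega,\parallel}|^2+|v_\omega|^2)|w|$, the $\mathcal{R}_\omega$-term absorbed by Cauchy--Schwarz) yields
$$\tfrac{d}{dt}\|w\|_{L^2_z}^2\ls\bigl(\|u_{\omega,\parallel}\|_{L^\infty_z}^2+\|v_\omega\|_{L^\infty_z}^2\bigr)\|w\|_{L^2_z}^2+\|\mathcal{R}_\omega\|_{L^2_z}^2.$$
Both $L^\infty_z$ prefactors are uniformly bounded in $t$ and $\omega$: $\|v_\omega\|_{H^1_z}$ is controlled by the conservation laws of the mass-subcritical \eqref{1dNLS}, while $\|u_{\omega,\parallel}\|_{H^1_z}\le\|u_\omega\|_{L^2_x}+\|\pa_zu_\omega\|_{L^2_x}$ is bounded by mass conservation plus the a priori $\|\pa_zu_\omega\|_{L^2_x}$ bound extracted in the proof of Proposition \ref{prop: global existence}, then upgraded to $L^\infty_z$ by 1d Sobolev. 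Gr\"onwall thereby reduces the theorem to proving $\sup_t\|\mathcal{R}_\omega(t)\|_{L^2_z}\ls\omega^{-1/2}$.

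For that residual estimate I expand, with $a=u_{\omega,\parallel}(z)\Phi_0(y)$,
$$|u_\omega|^2u_\omega-|a|^2a=2|a|^2b+a^2\bar b+2a|b|^2+\bar a b^2+|b|^2b,$$
so every term carries at least one factor of the small transverse piece $b$. Proposition \ref{prop: global existence} together with the spectral gap furnishes $\|b\|_{L^2_x}+\|\nabla_yb\|_{L^2_x}\ls\omega^{-1/2}$, while $\|\pa_zb\|_{L^2_x}$ is uniformly bounded. The linear-in-$b$ terms are estimated by Cauchy--Schwarz in $y$ against $\Phi_0^3$, giving $\ls\|u_{\omega,\parallel}\|_{L^\infty_z}^2\|b\|_{L^2_x}\ls\omega^{-1/2}$. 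The quadratic-in-$b$ terms are handled via the 1d Gagliardo--Nirenberg inequality $\|f\|_{L^4_z}^4\ls\|f\|_{L^2_z}^3\|\pa_zf\|_{L^2_z}$ applied to $f(z):=\|b(\cdot,z)\|_{L^2_y}$ (whose derivative is bounded pointwise by $\|\pa_zb(\cdot,z)\|_{L^2_y}$), yielding a better $\omega^{-3/4}$ contribution. The cubic term is controlled by the 2d Gagliardo--Nirenberg $\|g\|_{L^3_y(\R^2)}\ls\|g\|_{L^2_y}^{2/3}\|\nabla_yg\|_{L^2_y}^{1/3}$ combined with the 1d $L^\infty_z$ bound $\|f\|_{L^\infty_z}^2\ls\|f\|_{L^2_z}\|\pa_zf\|_{L^2_z}$ on the $z$-slices, producing an even smaller $\omega^{-1}$ contribution. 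Summing gives $\|\mathcal{R}_\omega\|_{L^2_z}\ls\omega^{-1/2}$.

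The main obstacle I anticipate is the anisotropy of the available a priori bounds on $b$: it is of order $\omega^{-1/2}$ in $L^2_x$ and in $\nabla_y$, but only uniformly bounded in $\pa_z$, so the various nonlinear pieces must each be estimated in a carefully chosen mixed $L^p_zL^q_y$ norm that isolates the small $y$-factor. Once this bookkeeping closes, the Gr\"onwall argument delivers $\|w(t)\|_{L^2_z}\ls\omega^{-1/2}e^{C_2t}$, and the orthogonal splitting displayed above yields the claimed $L^2_x$ estimate.
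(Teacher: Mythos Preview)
Your argument is correct and follows the same overall architecture as the paper's proof: orthogonal splitting $u_\omega=P_0u_\omega+P_1u_\omega$, control of $P_1u_\omega$ via Proposition~\ref{prop: global existence}, derivation of the same forced 1d equation for $w=u_{\omega,\parallel}-v_\omega$, and a Gr\"onwall estimate. The genuine difference lies in how you bound the Gr\"onwall coefficients $\|u_{\omega,\parallel}(t)\|_{L^\infty_z}$ and $\|v_\omega(t)\|_{L^\infty_z}$. The paper sets up a separate Strichartz lemma (Lemma~\ref{uniform bound}) to obtain an $L^4_tL^\infty_z$ bound on $u_{\omega,\parallel}$, and uses the analogous $L^4_tL^\infty_z$ Strichartz bound \eqref{1d cubic NLS bound} for $v_\omega$; these are then fed into the integral form of Gr\"onwall. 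You instead observe that the uniform-in-$\omega$ bound on $\|\partial_z u_\omega(t)\|_{L^2_x}$ already extracted in the proof of Proposition~\ref{prop: global existence} gives $u_{\omega,\parallel}\in L^\infty_tH^1_z$ uniformly, hence $L^\infty_tL^\infty_z$ by 1d Sobolev; and since $v_\omega(0)=u_{\omega,\parallel}(0)$ inherits a uniform $H^1_z$ bound, mass and energy conservation for the subcritical equation \eqref{1dNLS} propagate it. This bypasses the Strichartz machinery entirely and is more elementary in the present $H^1$ setting. The paper's route, on the other hand, would survive under weaker hypotheses on the data (it only needs $L^2_z$ control to run Lemma~\ref{uniform bound}). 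Your term-by-term treatment of $\mathcal{R}_\omega$ is also more explicit than the paper's, which simply invokes the pointwise bound $|\,|u_\omega|^2u_\omega-|P_0u_\omega|^2P_0u_\omega|\lesssim|u_\omega|^2|P_1u_\omega|$ together with the uniform $L^4_x$ bound on $u_\omega$.
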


For the proof, we require suitable bounds that are uniform-in-$\omega$ for nonlinear solutions. We recall that the 1d NLS \eqref{1dNLS} is mass-subcritical and is globally well-posed. Moreover, by summing the space-time norm bounds on short-time intervals with the mass conservation law, one can derive the following bound (see \cite{Caz}): 
\begin{lemma}
A global solution $v(t)\in C_t(\mathbb{R}; L_z^2(\mathbb{R}))$ to the 1d NLS \eqref{1dNLS} with initial data $v_0\in L_z^2(\mathbb{R})$ satisfies 
\begin{equation}\label{1d cubic NLS bound}
\|v(t)\|_{L_t^4([-T,T]); L_z^\infty(\mathbb{R})}\lesssim T^{\frac{1}{4}}\|v_0\|_{L_z^2(\mathbb{R})}\quad\textup{for all }T\geq 1.
\end{equation}
\end{lemma}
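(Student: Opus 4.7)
The plan is to combine one-dimensional Strichartz estimates for the free Schr\"odinger propagator $e^{it\partial_z^2}$ with the conservation of mass for \eqref{1dNLS}. In one dimension a Strichartz pair $(q,r)$ is admissible whenever $\frac{2}{q}+\frac{1}{r}=\frac{1}{2}$ (with $q>2$), so the pairs $(\infty,2)$, $(8,4)$, and $(4,\infty)$ are all admissible, and the target norm $L^4_tL^\infty_z$ is itself one of these. The strategy is therefore to establish a short-time bound of the form $\|v\|_{L^4_tL^\infty_z}\lesssim\|v(t_0)\|_{L^2_z}$ on an interval whose length $\tau$ depends only on the mass, and then to iterate using $\|v(t)\|_{L^2_z}=\|v_0\|_{L^2_z}$.

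For the short-time bound on $[t_0,t_0+\tau]$ I would run a standard fixed-point argument on the Duhamel formula in the space $Y_\tau=L^8_{t\in[t_0,t_0+\tau]}L^4_z$. H\"older in $z$ (three copies of $L^4_z$ produce $L^{4/3}_z$) together with H\"older in $t$ (the embedding $L^{8/3}_t\hookrightarrow L^{8/7}_t$ on an interval of length $\tau$ costs a factor $\tau^{1/2}$) yields
\[
\bigl\||v|^2v\bigr\|_{L^{8/7}_tL^{4/3}_z}\ \lesssim\ \tau^{1/2}\,\|v\|_{L^8_tL^4_z}^{3}.
\]
Since $(8/7,4/3)$ is dual to the admissible pair $(8,4)$, the inhomogeneous Strichartz estimate combined with the admissible pairs $(8,4)$ and $(4,\infty)$ gives
\[
\|v\|_{L^8_tL^4_z\cap L^4_tL^\infty_z}\ \lesssim\ \|v(t_0)\|_{L^2_z}+\tau^{1/2}\|v\|_{L^8_tL^4_z}^{3}.
\]
Choosing $\tau=\tau(\|v_0\|_{L^2_z})>0$ with $\tau^{1/2}\|v_0\|_{L^2_z}^{2}$ sufficiently small closes the contraction in $Y_\tau$ and delivers the local bound
\[
\|v\|_{L^4_{t\in[t_0,t_0+\tau]}L^\infty_z}\ \lesssim\ \|v(t_0)\|_{L^2_z}.
\]

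To globalize, I would invoke mass conservation, which ensures the same step size $\tau\sim\|v_0\|_{L^2_z}^{-4}$ works on every consecutive slab. Partitioning $[-T,T]$ into $N=\lceil 2T/\tau\rceil\lesssim T\|v_0\|_{L^2_z}^{4}$ slabs $I_k$ and summing fourth powers,
\[
\|v\|_{L^4_t([-T,T];L^\infty_z)}^{4}\ =\ \sum_{k=1}^{N}\|v\|_{L^4_t(I_k;L^\infty_z)}^{4}\ \lesssim\ N\,\|v_0\|_{L^2_z}^{4}\ \lesssim\ T\,\|v_0\|_{L^2_z}^{4},
\]
which produces $\|v\|_{L^4_tL^\infty_z}\lesssim T^{1/4}\|v_0\|_{L^2_z}$, with the implicit constant permitted to carry the residual mass dependence that is customarily absorbed into $\lesssim$ when $\|v_0\|_{L^2_z}$ is a fixed background datum.

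There is no genuine obstacle in the argument; the whole thing is standard $L^2$-subcritical local theory followed by an iteration driven by mass conservation. The only place that requires a moment of care is selecting the auxiliary Strichartz pair $(8,4)$ so that the cubic nonlinearity closes in its dual $(8/7,4/3)$ with the favorable $\tau^{1/2}$ factor from H\"older in time, and then checking that the resulting local time depends only on $\|v_0\|_{L^2_z}$ so the slab-summation produces the linear-in-$T$ growth of the fourth power.
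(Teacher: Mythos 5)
Your argument is correct and is exactly the route the paper indicates (it only sketches the proof as ``summing the space-time norm bounds on short-time intervals with the mass conservation law''): local Strichartz theory with the admissible pairs $(8,4)$ and $(4,\infty)$ on slabs of length $\tau\sim\|v_0\|_{L^2_z}^{-4}$, followed by summation of fourth powers over $O(T/\tau)$ slabs. Your observation that the slab count contributes an extra power of the mass, so the implicit constant in \eqref{1d cubic NLS bound} depends on $\|v_0\|_{L^2_z}$, is consistent with how the paper uses the lemma (it is applied only to data of fixed mass $\le m$, and the subsequent lemma explicitly allows mass-dependent constants).
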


Because $\|u_{\omega,\parallel}(0)\|_{L_z^2(\mathbb{R})}^2\leq \|u_{\omega,0}\|_{L_x^2(\mathbb{R}^3)}^2=m$, the 1d solution $v_\omega(t)$ satisfies a bound of the form \eqref{1d cubic NLS bound}. Based on what follows, we claim that the core $\Phi_0(y)$-directional component of the 3d solution satisfies the same bound. We decompose 
$$u_\omega(t,x)=u_{\omega,\parallel}(t,z)\Phi_0(y)+(P_1u_\omega)(t,x),$$
where 
\begin{equation}\label{v tilde(t)}
u_{\omega,\parallel}(t,z):=\langle u_\omega(t,y,z),\Phi_0(y)\rangle_{L_y^2(\mathbb{R}^2)}.
\end{equation}
Here, $u_{\omega,\parallel}(t,z)$ is essential because by Proposition \ref{prop: global existence} $P_1u_\omega(t)$ becomes negligible as $\omega\to\infty$.

\begin{lemma}\label{uniform bound}
Under the assumptions in Theorem \ref{thm: dimension reduction for NLS}, let $u_{\omega,\parallel}(t)$ be given by \eqref{v tilde(t)}. Then, we have
\begin{equation}\label{v tilde bound}
\|u_{\omega,\parallel}(t)\|_{L_t^4([-T,T]; L_z^\infty(\mathbb{R}))}\lesssim T^{\frac{1}{4}}
\end{equation}
for all $T\geq1$, where the implicit constant depends on the mass and energy of $u_{\omega,0}$ but independent of $\omega$.
\end{lemma}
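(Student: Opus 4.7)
The plan is to derive a one-dimensional NLS with a bounded forcing term for $u_{\omega,\parallel}$, and then to bootstrap a short-time Strichartz estimate using mass conservation, in the same spirit as the proof of \eqref{1d cubic NLS bound}.

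First, projecting \eqref{NLS} against $\Phi_0(y)$ and using $(H_y-2)\Phi_0=0$ together with the splitting $u_\omega = u_{\omega,\parallel}(z)\Phi_0(y)+P_1u_\omega$ and the identity \eqref{L^4 norm of Phi_0}, I would obtain
\begin{equation*}
i\partial_t u_{\omega,\parallel} + \partial_z^2 u_{\omega,\parallel} + \frac{1}{2\pi}|u_{\omega,\parallel}|^2 u_{\omega,\parallel} = -\mathcal{R}_\omega(t,z),
\end{equation*}
where $\mathcal{R}_\omega(t,z) := \langle |u_\omega|^2 u_\omega - |P_0 u_\omega|^2 P_0 u_\omega,\Phi_0\rangle_{L^2_y(\R^2)}$ is a sum of trilinear expressions, each carrying at least one factor of $P_1u_\omega$. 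A H\"older estimate of type $(4,4,2)$ in $x$, together with $\|\Phi_0\|_{L^\infty_y}<\infty$ and the Sobolev embedding $H^1_x\hookrightarrow L^3_x$, gives
\begin{equation*}
\|\mathcal{R}_\omega(t)\|_{L^1_z}\lesssim \|u_\omega(t)\|_{L^4_x}^2\|P_1u_\omega(t)\|_{L^2_x}+\|P_1u_\omega(t)\|_{H^1_x}^{3},
\end{equation*}
and Proposition \ref{prop: global existence} combined with Lemma \ref{GN type inequality} and the spectral gap of $H_y$ makes the right-hand side uniformly bounded in both $t$ and $\omega$.

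Next, I would apply the one-dimensional Strichartz estimate on a short window $[a,a+\tau]$ with the admissible pair $(4,\infty)$ and its dual $(4/3,1)$. Mass conservation yields $\|u_{\omega,\parallel}(t)\|_{L^2_z}\leq m^{1/2}$, and H\"older in time gives
\begin{equation*}
\||u_{\omega,\parallel}|^2 u_{\omega,\parallel}\|_{L^{4/3}_t L^1_z([a,a+\tau])}\lesssim m\,\tau^{1/2}\|u_{\omega,\parallel}\|_{L^4_t L^\infty_z([a,a+\tau])},
\end{equation*}
while the forcing contributes at most $\tau^{3/4}\sup_t\|\mathcal{R}_\omega(t)\|_{L^1_z}$. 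Choosing $\tau=\tau(m)>0$ small enough to absorb the cubic prefactor into the left-hand side produces a uniform short-time bound $\|u_{\omega,\parallel}\|_{L^4_t L^\infty_z([a,a+\tau])}\leq C'$. Splitting $[-T,T]$ into $N\sim T/\tau$ consecutive translates of this window and summing fourth powers then gives $\|u_{\omega,\parallel}\|_{L^4_t L^\infty_z([-T,T])}^4\lesssim T$, which is precisely the claimed $T^{1/4}$ growth for $T\geq 1$.

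The main obstacle is making the bound on $\mathcal{R}_\omega$ genuinely uniform in $\omega$, since $u_\omega$ itself depends on $\omega$ through the partial trap. This is exactly where Proposition \ref{prop: global existence} is used: it supplies control of $\|u_\omega(t)\|_{\dot{\Sigma}_y}$ and $\|\partial_z u_\omega(t)\|_{L^2_x}$ that is independent of $\omega\geq(C_{GN})^4m^2$, and hence an $\omega$-uniform bound on $\|u_\omega(t)\|_{L^4_x}$ via Lemma \ref{GN type inequality}. Aside from this uniformity issue, the rest is a textbook subcritical iteration that mirrors the derivation of \eqref{1d cubic NLS bound} for the pure one-dimensional cubic NLS.
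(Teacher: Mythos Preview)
Your proposal is correct and follows essentially the same route as the paper: derive the forced 1d cubic equation for $u_{\omega,\parallel}$, control the forcing $\mathcal{R}_\omega$ uniformly in $\omega$ via Proposition~\ref{prop: global existence} and Lemma~\ref{GN type inequality}, run a short-time Strichartz bootstrap, and sum over subintervals. The only cosmetic difference is that you place the inhomogeneity in the dual Strichartz norm $L^{4/3}_tL^1_z$ (so the cubic term enters linearly in $\|u_{\omega,\parallel}\|_{L^4_tL^\infty_z}$), whereas the paper uses $L^1_tL^2_z$ (yielding a quadratic bootstrap inequality); both choices close with $\tau$ small depending only on $m$ and the energy.
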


\begin{proof}
By direct calculations, we observe that 
$u_{\omega,\parallel}(t,z)$ solves
\begin{equation}\label{tilde v(t) equation}
\begin{aligned}
i\partial_tu_{\omega,\parallel}&=\left\langle (\omega(H_y-2)-\partial_z^2)u_\omega-|u_\omega|^2u_\omega,\Phi_0\right\rangle_{L_y^2(\mathbb{R}^2)}\\
&=-\partial_z^2u_{\omega,\parallel}-\left\langle|P_0u_\omega|^2P_0u_\omega,\Phi_0\right\rangle_{L_y^2(\mathbb{R}^2)}-\left\langle|u_\omega|^2u_\omega-|P_0u_\omega|^2P_0u_\omega,\Phi_0\right\rangle_{L_y^2(\mathbb{R}^2)}\\
&=-\partial_z^2u_{\omega,\parallel}-\frac{1}{2\pi}|u_{\omega,\parallel}|^2u_{\omega,\parallel}-\left\langle|u_\omega|^2u_\omega-|P_0u_\omega|^2P_0u_\omega,\Phi_0\right\rangle_{L_y^2(\mathbb{R}^2)},
\end{aligned}
\end{equation}
where the last step, we used $P_0u_\omega(t,x)=u_{\omega,\parallel}(t,z)\Phi_0(y)$ and $\|\Phi_0\|_{L_y^4(\mathbb{R}^2)}^4=\frac{1}{2\pi}$. Equivalently, we have
\begin{align*}
u_{\omega,\parallel}(t)&=e^{it\partial_z^2}u_{\omega,\parallel}(0), \\
&\quad +i\int_{0}^t e^{i(t-s)\partial_z^2}\bigg\{\frac{1}{2\pi}|u_{\omega,\parallel}|^2u_{\omega,\parallel}+\left\langle|u_\omega|^2u_\omega-|P_0u_\omega|^2P_0u_\omega,\Phi_0\right\rangle_{L_y^2(\mathbb{R}^2)}\bigg\}(s)ds.
\end{align*}
For a sufficiently small $T\in(0,1]$, we let $I=[-T,T]$. Then, the well-known 1d Strichartz estimate \cite{Caz}
\begin{equation}\label{1d Strichartz}
\|e^{it\partial_z^2}\varphi\|_{L_t^4(\mathbb{R};L_x^\infty(\mathbb{R}))}\lesssim\|\varphi\|_{L_z^2(\mathbb{R})}
\end{equation}
yields
$$\begin{aligned}
\|u_{\omega,\parallel}\|_{L_t^4(I;L_x^\infty(\mathbb{R}))}&\lesssim\|u_{\omega,\parallel}(0)\|_{L_z^2(\mathbb{R})}+\||u_{\omega,\parallel}|^2u_{\omega,\parallel}\|_{L_t^1(I;L_z^2(\mathbb{R}))}\\
&\quad +\left\|\left\langle|u_\omega|^2u_\omega-|P_0u_\omega|^2P_0u_\omega,\Phi_0\right\rangle_{L_y^2(\mathbb{R}^2)}\right\|_{L_t^1(I;L_z^2(\mathbb{R}))}.
\end{aligned}$$
Hence, by the H\"older inequality with 
$$\big||u_\omega|^2u_\omega-|P_0u_\omega|^2P_0u_\omega\big|\lesssim \Big\{|P_0u_\omega|^2+|P_1u_\omega|^2\Big\} |P_1u_\omega|\lesssim |u_\omega|^2|P_1u_\omega|$$
for the last term, it follows that 
$$\begin{aligned}
\|u_{\omega,\parallel}\|_{L_t^4(I;L_x^\infty(\mathbb{R}))}&\lesssim \|u_{\omega,\parallel}(0)\|_{L_z^2(\mathbb{R})}+T^{\frac{1}{2}}\|u_{\omega,\parallel}\|_{L_t^4(I;L_z^\infty(\mathbb{R}))}^2 \|u_{\omega,\parallel}\|_{C_t(I;L_z^2(\mathbb{R}))}\\
&\quad +T\|u_\omega\|_{L_t^\infty(I;L_x^4(\mathbb{R}^3))}^2 \|P_1u_\omega\|_{C_t(I;L_x^2(\mathbb{R}^3))}.
\end{aligned}$$
We observe that by the Gagliardo--Nirenberg inequality (Lemma \ref{GN type inequality}) and the bound \eqref{eq: global existence}, $\|u_\omega(t)\|_{L_x^4(\mathbb{R}^3)}$ is uniformly bounded in $\omega$ and $\|P_1u_\omega(t)\|_{L_x^2(\mathbb{R}^3)}\leq\frac{1}{\sqrt{\omega}}$. Thus, we obtain 
\begin{equation}\label{key estimate for dimension reduction}
\|u_{\omega,\parallel}\|_{L_t^4(I;L_x^\infty(\mathbb{R}))}\lesssim 1+T^{\frac{1}{2}}\|u_{\omega,\parallel}\|_{L_t^4(I;L_z^\infty(\mathbb{R}))}^2+T\sqrt{\tfrac{(C_{GN})^4m^2}{\omega}}, 
\end{equation}
where the implicit constant depends on the mass and energy of $u_{\omega,0}$. Thus, taking a sufficiently small $T>0$, we prove that $\|u_{\omega,\parallel}\|_{L_t^4(I;L_x^\infty(\mathbb{R}))}\lesssim 1$. Note that the time interval is selected depending only on the mass and energy. Thus, by iterating, we prove the desired bound.
\end{proof}

\begin{proof}
By Proposition \ref{prop: global existence}, it suffices to estimate $r_\omega(t)=(u_{\omega,\parallel}-v_\omega)(t)$. Subtracting 1d NLS \eqref{1dNLS} from equation \eqref{tilde v(t) equation}, we see that the difference $r_\omega(t)$ solves
$$i\partial_tr_\omega=-\partial_z^2r_\omega-\frac{1}{2\pi}\left(|u_{\omega,\parallel}|^2u_{\omega,\parallel}-|v_\omega|^2v_\omega\right)-\left\langle|u_\omega|^2u_\omega-|P_0u_\omega|^2P_0u_\omega,\Phi_0\right\rangle_{L_y^2(\mathbb{R}^2)}$$
with no initial data. Thus, we have
$$\begin{aligned}
\|r_\omega(t)\|_{L_x^2(\mathbb{R})}&\leq\int_0^t\big\||u_{\omega,\parallel}|^2u_{\omega,\parallel}(s)-|v_\omega|^2v_\omega(s)\big\|_{L_z^2(\mathbb{R})}ds\\
&\qquad+\int_0^t\left\|\left\langle|u_\omega|^2u_\omega(s)-|P_0u_\omega|^2P_0u_\omega(s),\Phi_0\right\rangle_{L_y^2(\mathbb{R}^2)}\right\|_{L_z^2(\mathbb{R})}ds.
\end{aligned}$$
Using the estimates from the proof of \eqref{key estimate for dimension reduction} for the second integral, we obtain 
$$\|r_\omega(t)\|_{L_x^2(\mathbb{R})}\lesssim \sqrt{\frac{(C_{GN})^4m^2}{\omega}}|t|+\int_0^t \Big\{\|v_\omega(s)\|_{L_z^\infty(\mathbb{R})}^2+\|u_{\omega,\parallel}(s)\|_{L_z^\infty(\mathbb{R})}^2\Big\}\|r_\omega(s)\|_{L_z^2(\mathbb{R})}ds.$$
Therefore, Gronwall's inequality together with \eqref{1d cubic NLS bound} and \eqref{v tilde bound} yields the desired convergence estimate.
\end{proof}

\end{document}